\newtheorem{theorem}{Theorem}[section]
\newtheorem{lemma}[theorem]{Lemma}
\newtheorem{proposition}[theorem]{Proposition}
\newtheorem{corollary}[theorem]{Corollary}
\theoremstyle{definition}
\newtheorem{definition}[theorem]{Definition}
\newtheorem{example}[theorem]{Example}
\theoremstyle{remark}
\newtheorem{remark}[theorem]{Remark}
\numberwithin{equation}{section}
\def\<{\langle}
\def\>{\rangle}
\newcommand{\diam}{\mathop{\mathrm{diam}}\nolimits}
\newcommand{\dist}{\mathop{\mathrm{dist}}\nolimits}
\newcommand{\Natural}{\mathbb N}
\newcommand{\Real}{\mathbb R}
\newcommand{\abs}[1]{\left\vert#1\right\vert}
\newcommand{\set}[1]{\left\{#1\right\}}
\newcommand{\sequence}[1]{\left\{#1\right\}}
\newcommand{\restricted}{\mathord{\upharpoonright}}
\newcommand{\ext}{\operatorname{ext}}
\newcommand{\strexp}{\operatorname{strexp}}
\newcommand{\norm}[1]{\left\Vert#1\right\Vert}
\newcommand{\duality}[1]{\left\langle#1\right\rangle}
\newcommand{\clco}{\mathop{\overline{\mathrm{co}}}\nolimits}
\newcommand{\cconv}{\clco}
\newcommand{\closedball}[1]{B_{#1}}
\newcommand{\indicator}[1]{{\mathbf 1}_{{#1}}}
\newcommand{\Free}{{\mathcal F}}
\newcommand{\Lip}{{\mathrm{Lip}}_0}
\newcommand{\justLip}{{\mathrm{Lip}}}
\newcommand{\lip}{{\mathrm{lip}}_0}
\newcommand{\F}{\mathcal F}
\newcommand{\N}{\mathbb N}
\newcommand{\R}{\mathbb R}
\newcommand{\pten}{\ensuremath{\widehat{\otimes}_\pi}}
\newcommand\conv{\operatorname{co}}
\title{Extremal structure and Duality of Lipschitz free spaces}
\author[L. Garc\'ia-Lirola]{Luis Garc\'ia-Lirola}
\thanks{The research of L. Garc\'ia-Lirola was supported by the grants MINECO/FEDER MTM2014-57838-C2-1-P and Fundaci\'on S\'eneca CARM
19368/PI/14}
\address[L. Garc\'ia-Lirola]{Universidad de Murcia, Facultad de Matem\'aticas, Departamento de Matem\'aticas, 
30100 Espinardo (Murcia), Spain}
\email{luiscarlos.garcia@um.es}
\author[C. Petitjean]{Colin Petitjean}\thanks{The research of C. Petitjean and A. Proch\'azka was supported by the French “Investissements d’Avenir” program, project ISITE-BFC".}
\address[C. Petitjean]{Universit\'e Bourgogne Franche-Comt\'e, Laboratoire de Math\'ematiques UMR 6623, 16 route de Gray,
25030 Besan\c con Cedex, France}\email{colin.petitjean@univ-fcomte.fr}
\author[A. Proch\'azka]{Anton\'in Proch\'azka}
\address[A. Proch\'azka]{Universit\'e Bourgogne Franche-Comt\'e, Laboratoire de Math\'ematiques UMR 6623, 16 route de Gray,
25030 Besan\c con Cedex, France}
\email{antonin.prochazka@univ-fcomte.fr}
\author[A. Rueda Zoca]{ Abraham Rueda Zoca }\thanks{The research of A. Rueda Zoca was supported by a research grant Contratos predoctorales FPU del Plan Propio del Vicerrectorado de Investigaci\'on y Transferencia de la Universidad de Granada, by MINECO (Spain) Grant MTM2015-65020-P and by Junta de Andaluc\'ia Grants FQM-0185.}
\address[A. Rueda Zoca]{Universidad de Granada, Facultad de Ciencias.
Departamento de An\'{a}lisis Matem\'{a}tico, 18071-Granada
(Spain)} \email{ abrahamrueda@ugr.es}
\urladdr{\url{https://arzenglish.wordpress.com}}
\keywords{Extreme point; Dentability; Lipschitz free; Duality; Uniformly discrete}
\subjclass[2010]{Primary 46B20; Secondary 54E50}
\date{July, 2017}
\begin{document}

\begin{abstract}
We analyse the relationship between different extremal notions in Lipschitz free spaces (strongly exposed, exposed, preserved extreme and extreme points). We prove in particular that every preserved extreme point of the unit ball is also a denting point. We also show in some particular cases that every extreme point is a molecule, and that a molecule is extreme whenever the two points, say $x$ and $y$, which define it satisfy that the metric segment $[x, y]$ only contains $x$ and $y$. The most notable among them is the case when the free space admits an isometric predual with some additional properties. As an application, we get some new consequences about norm-attainment in spaces of vector valued Lipschitz functions.
\end{abstract}

\maketitle

\section{Introduction}

The Lipschitz free space $\mathcal F(M)$ of a metric space $M$ (also known as Arens-Eells space) is a Banach space such that every Lipschitz function on $M$ admits a canonical linear extension defined on $\Free(M)$ (see below for details).
This fundamental linearisation property makes of Lipschitz free spaces a precious magnifying glass to study Lipschitz maps between metric spaces, and for example it relates some well known open problems in the Banach space theory to some open problems about Lipschitz-free spaces (see \cite{godsurvey}).
A considerable effort to study the linear structure and geometry of these spaces has been undergone by many researchers in the last two or three decades.

In the present paper we want to focus on the extremal structure of $\Free(M)$.
The study of extremal structure plays an important role in optimisation  (indeed we obtain some consequences in norm-attainment of Lipschitz maps). 
It has probably started in~\cite{weaver}. 
It is proved for instance in Weaver's book that preserved extreme points of the unit ball $B_{\F(M)}$ are always molecules (i.e. measures of the form $m_{xy}:=(\delta(x)-\delta(y))/d(x,y)$). 
Recently Aliaga and Guirao pushed further this work (see \cite{AG}). 
In particular, answering a question of Weaver, they showed in the compact case that the extreme points are in fact preserved, and are exactly the molecules $m_{xy}$ for which there are no points except $x$ and $y$ in the metric segment $[x,y]$. 
They also give a metric characterisation of preserved extreme points in full generality, which we prove also here by a different argument. More results in the same line appeared in \cite{gpr}, where a metric characterisation of the strongly exposed points is given.

However, the two main questions in this domain remain open: \\
a) If $\mu \in \ext(B_{\Free(M)})$, is $\mu$ necessarily of the form $\mu=m_{xy}$ for some $x\neq y \in M$? \\
b) If the metric segment $[x,y]$ does not contain any other point of $M$ than $x$ and $y$, is $m_{xy}$ an extreme point of $B_{\Free(M)}$ ?

The goal of the present article is to continue the effort in exploring the extremal structure of $\F(M)$ and provide affirmative answers to both previous questions a) and b) in some particular cases. 
For instance, we prove that for the following chain of implications
$$\mbox{strongly exposed} \stackrel{(1)}{\Longrightarrow}\mbox{denting} \stackrel{(2)}{\Longrightarrow} \mbox{preserved extreme}\stackrel{(3)}{\Longrightarrow} \mbox{extreme},$$
the converse of (2) holds true in general (Theorem~\ref{prop:preserveddenting}) but the converse of (1) and (3) are both false (Examples~\ref{ex:TreeCompact} and~\ref{example:AG} respectively). 
However, some of the previous implications are equivalences in some special classes of metric spaces. 
The most notable among them is the case when $\F(M)$ admits an isometric predual with some additional properties. 
We are thus led to the study of preduals of free spaces which seems interesting on its own (see Section \ref{s:Duality}).

The paper is organised as follows. 
In Section \ref{s:general} we prove that every preserved extreme point of $B_{\F(M)}$ is also a denting point in full generality (Theorem \ref{prop:preserveddenting}) and we provide a different proof of the metric charactisation of preserved extreme points given in \cite{AG}. 
We also show that the canonical image $\delta(M)$ of $M$ inside $\Free(M)$  as well as the set $V$ of molecules are weakly closed in $\F(M)$ (Proposition \ref{p:DeltaMweaklyClosed} and Proposition \ref{prop:Vweaklyclosed} respectively). 
Next in Section~\ref{s:Duality}, based on (\cite{dal1,Kalton04,weaver}), we study under what circumstances $\F(M)$ is isometric to a dual space. 
We also pin down a distinguished class of preduals, called natural preduals (see Definition \ref{def:natural}), which turns out to be of particular interest in the later sections. 
In Section \ref{s:ExtStrucNatPred}, we study the extremal structure of spaces admitting such a natural predual. 
In particular, we show under an additional assumption that the set of extreme points coincides with the set of strongly exposed points (Corollary \ref{cor:extnatpredual}). 
Then in Section \ref{section:unifdisc} we focus on the case when $M$ is uniformly discrete and bounded. 
Under this assumption, the question b) has an affirmative answer (Proposition~\ref{unifdisccharext}), 
the implication (1) admits a converse (Proposition~\ref{presestrunidisc}),  
and the question a) has also an affirmative answer if moreover $\F(M)$ admits a natural predual (Proposition~\ref{prop:unifdiscexteq}). 
In Section \ref{s:compact} we show that the converse of (3) holds for certain compact spaces since the norm $\mathcal F(M)$ turns out to be weak* asymptotically uniformly convex. To finish, in Section \ref{Section:normattainment} we apply our work to deduce results about norm attainment of Lipschitz functions. \\

\textbf{Notation.} Throughout the paper we will only consider real Banach spaces. Given a Banach space $X$, we will denote by $B_X$ (respectively $S_X$) the closed unit ball (respectively the unit sphere) of $X$. We will also denote by $X^*$ the topological dual of $X$. The notations $\ext(B_X)$, $\exp(B_X)$, and $\strexp(B_X)$ stand for the set of extreme, exposed, and strongly exposed points of $B_X$, respectively (we refer to \cite{Bourgin83} for formal definitions and background on this concepts). Given a norming subspace $Y$ of $X$, we denote by $\sigma(X, Y)$ the topology on $X$ of pointwise convergence on elements of $Y$. Given a topological space $(T,\tau)$, we denote $\mathcal C_\tau(T)$ the space of continuous functions on $T$.

Given a metric space $M$, $B(x,r)$ denotes the closed ball in $M$ centered at $x\in M$ with radius $r$. We will denote by $\Lip(M,X)$ (or simply $\Lip(M)$ if $X=\R$) the space of all $X$-valued Lipschitz functions on $M$ which vanish at a designated origin $0\in M$. We will consider the norm in $\Lip(M)$ given by the best Lipschitz constant, denoted $\Vert \cdot\Vert_L$. Of particular interest to us is the space of \emph{little-Lipschitz} functions,
\[ \lip(M):=\left\{f\in \Lip(M): \lim\limits_{\varepsilon\rightarrow 0} \sup\limits_{0<d(x,y)<\varepsilon} \frac{| f(x)-f(y)|}{d(x,y)}=0\right\}.\]
We denote $\delta$ the canonical isometric embedding of $M$ into $\mathcal F(M)$, which is given by $\<f,\delta(x)\> = f(x)$ for $x\in M$ and $f\in \Lip(M)$. By a \emph{molecule} we mean an element of $\mathcal{F}(M)$ of the form 
\[ m_{xy}:= \frac{\delta(x)-\delta(y)}{d(x,y)}\]
for $x,y\in M$, $x\neq y$. The set of all molecules in $M$ will be denoted by $V$. Note by passing that $V$ is a norming set for $\Lip(M)$ and so $B_{\mathcal F(M)}=\cconv(V)$. We will need for every $x,y\in M$, $x\neq y$, the function
\[f_{xy}(t):= \frac{d(x,y)}{2}\frac{d(t,y)-d(t,x)}{d(t,y)+d(t,x)}.\]
The properties collected in the next lemma have been proved already in~\cite{ikw2}. 
They make of $f_{xy}$ a useful tool for studying the geometry of $B_{\mathcal F(M)}$. 

\begin{lemma}\label{lemma:IKWfunction} Let $x,y\in M$ with $x\neq y$. We have
\begin{enumerate}[(a)]
\item $\frac{f_{xy}(u)-f_{xy}(v)}{d(u,v)} \leq \frac{d(x,y)}{\max\{d(x,u)+d(u,y),d(x,v)+d(v,y)\}}$ for all $u\neq v \in M$. 
\item $f_{xy}$ is Lipschitz and $\norm{f_{xy}}_{L}\leq 1$. 
\item Let $u\neq v \in M$ and $\varepsilon>0$ be such that $\frac{f_{xy}(u)-f_{xy}(v)}{d(u,v)}>1-\varepsilon$. Then 
\[(1-\varepsilon)\max\{d(x,v)+d(y,v),d(x,u)+d(y,u)\}< d(x,y).\]
\item If $u\neq v \in M$ and $\frac{f_{xy}(u)-f_{xy}(v)}{d(u,v)}=1$, then $u,v\in [x,y]$.
\end{enumerate} 
\end{lemma}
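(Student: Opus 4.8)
The plan is to prove part (a) by a direct computation and then obtain (b), (c), (d) as immediate corollaries. Fix $a:=d(x,y)$ and, for $t\in M$, abbreviate $p(t):=d(t,x)$ and $q(t):=d(t,y)$, noting that $p(t)+q(t)>0$ because $x\ne y$, so $f_{xy}(t)=\frac a2\cdot\frac{q(t)-p(t)}{q(t)+p(t)}$ is well defined on all of $M$. The key manipulation is the identity $\frac{q-p}{q+p}=1-\frac{2p}{q+p}$, which for $u\ne v$ gives
\[f_{xy}(u)-f_{xy}(v)=a\left(\frac{p(v)}{p(v)+q(v)}-\frac{p(u)}{p(u)+q(u)}\right)=\frac{a\bigl(p(v)q(u)-p(u)q(v)\bigr)}{(p(u)+q(u))(p(v)+q(v))}.\]
I would then estimate the numerator in two symmetric ways, using
\[p(v)q(u)-p(u)q(v)=q(u)\bigl(p(v)-p(u)\bigr)+p(u)\bigl(q(u)-q(v)\bigr)=q(v)\bigl(p(v)-p(u)\bigr)+p(v)\bigl(q(u)-q(v)\bigr),\]
together with the reverse triangle inequalities $|p(v)-p(u)|\le d(u,v)$ and $|q(u)-q(v)|\le d(u,v)$. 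The first expression bounds $|p(v)q(u)-p(u)q(v)|$ by $(p(u)+q(u))d(u,v)$ and the second by $(p(v)+q(v))d(u,v)$; dividing through by the common denominator and keeping the sharper estimate yields $|f_{xy}(u)-f_{xy}(v)|\le a\,d(u,v)/\max\{p(u)+q(u),p(v)+q(v)\}$, which is exactly (a) (the inequality in (a) being trivial when its left-hand side is $\le0$).

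The other three parts then follow quickly. For (b): by the triangle inequality $p(t)+q(t)=d(x,t)+d(t,y)\ge d(x,y)=a$, so the right-hand side of (a) never exceeds $1$; applying (a) also with $u$ and $v$ exchanged controls $|f_{xy}(u)-f_{xy}(v)|$, hence $\norm{f_{xy}}_L\le1$. For (c): combining the hypothesis with (a) gives $1-\varepsilon<d(x,y)/\max\{d(x,u)+d(u,y),d(x,v)+d(v,y)\}$, and rearranging is the claim. For (d): applying (a) to the equality case gives $\max\{d(x,u)+d(u,y),d(x,v)+d(v,y)\}\le d(x,y)$, while each of the two terms is $\ge d(x,y)$ by the triangle inequality; hence both equal $d(x,y)$, i.e.\ $u,v\in[x,y]$.

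The only delicate point — and it is minor — is spotting the two splittings of $p(v)q(u)-p(u)q(v)$ so that, after the triangle inequality is applied to the differences, the surviving factors are precisely $p(u)+q(u)$ and $p(v)+q(v)$; once that is in place the rest is bookkeeping, consistent with the fact that the lemma is attributed to \cite{ikw2}.
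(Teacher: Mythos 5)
Your proof is correct: the algebraic identity $f_{xy}(u)-f_{xy}(v)=\frac{d(x,y)\,(p(v)q(u)-p(u)q(v))}{(p(u)+q(u))(p(v)+q(v))}$ and the two splittings of the numerator, combined with the reverse triangle inequality, do give the stated bound in (a) (in fact for the absolute value), and parts (b), (c), (d) then follow exactly as you say, using $d(x,u)+d(u,y)\ge d(x,y)$ and the definition of the metric segment. The paper itself offers no proof of this lemma — it only cites \cite{ikw2} — so there is no in-paper argument to compare with; your computation is a complete, self-contained verification.
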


\section{General results}\label{s:general}

Our first goal is to show that every preserved extreme point of $B_{\Free(M)}$ is also a denting point. In order to prove this proposition we need the following characterisation of preserved extreme points which appears in \cite{GMZ14} and that we state for future reference. 

\begin{proposition}[Proposition 9.1 in \cite{GMZ14}]\label{p:CharaPreservedNets} 
Let $X$ be a Banach space and let $x\in B_X$. The following are equivalent:
\begin{enumerate}[(i)]
\item $x$ is an extreme point of $B_{X^{**}}$.
\item The slices of $B_X$ containing $x$ are a neighbourhood basis of $x$ for the weak topology in $B_X$.
\item For every sequences $\{y_n\}$ and $\{z_n\}$ in $B_X$ such that $\frac{y_n+z_n}{2}\stackrel{\Vert\cdot\Vert}{\to}x$ we have that $y_n\stackrel{w}{\to}x$.  
\end{enumerate}
\end{proposition}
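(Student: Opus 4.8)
The plan is to establish the two equivalences (i)$\Leftrightarrow$(ii) and (i)$\Leftrightarrow$(iii). For (i)$\Rightarrow$(ii) I would invoke Choquet's lemma in the weak$^*$-compact convex set $B_{X^{**}}$, regarded inside the locally convex space $(X^{**},\sigma(X^{**},X^*))$ whose continuous dual is $X^*$: since $x\in\ext(B_{X^{**}})$, the weak$^*$-slices $\{y^{**}\in B_{X^{**}}:\langle f,y^{**}\rangle>\alpha\}$ with $f\in X^*$ that contain $x$ form a neighbourhood basis of $x$ in $(B_{X^{**}},w^*)$. (Choquet's lemma follows from Milman's partial converse to Krein--Milman together with Hahn--Banach separation: if $V$ is a relative weak$^*$-neighbourhood of $x$ in $B_{X^{**}}$, then $x$ lies off the weak$^*$-closed convex hull of $B_{X^{**}}\setminus V$, and any functional strictly separating $x$ from that set cuts out a slice inside $V$.) Intersecting each such slice with $B_X$ gives a slice of $B_X$ containing $x$, and because the trace on $B_X$ of $\sigma(X^{**},X^*)$ is exactly the weak topology of $X$ restricted to $B_X$, these traces form a weak-neighbourhood basis of $x$ in $B_X$, which is (ii).

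For (ii)$\Rightarrow$(i) I argue contrapositively: if $x=\tfrac12(x^{**}+w^{**})$ with $x^{**}\neq w^{**}$ in $B_{X^{**}}$, fix $f\in X^*$ with $c:=\langle f,x^{**}\rangle-f(x)\neq 0$, so $\langle f,w^{**}\rangle-f(x)=-c$. Then $U:=\{u\in B_X:\abs{f(u)-f(x)}<\abs{c}/2\}$ is a weak neighbourhood of $x$ that contains no slice of $B_X$ through $x$. Indeed, given $S=\{u\in B_X:g(u)>\alpha\}$ with $g(x)>\alpha$, the average of $\langle g,x^{**}\rangle$ and $\langle g,w^{**}\rangle$ equals $g(x)>\alpha$, so one of them (say $\langle g,x^{**}\rangle$) exceeds $\alpha$; the weak$^*$-open subset of $B_{X^{**}}$ defined by $\langle g,\cdot\rangle>\alpha$ and $\abs{\langle f,\cdot\rangle-(f(x)+c)}<\abs{c}/2$ contains $x^{**}$, hence meets $B_X$ by Goldstine's theorem, and any point it yields lies in $S\setminus U$. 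This contradicts (ii).

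For (i)$\Rightarrow$(iii), take $y_n,z_n\in B_X$ with $\tfrac12(y_n+z_n)\to x$ in norm and suppose $y_n\not\to x$ weakly. After passing to a subsequence there are $f\in X^*$ and $\varepsilon>0$ with, say, $f(y_n)\ge f(x)+\varepsilon$ for all $n$; the case $f(y_n)\le f(x)-\varepsilon$ reduces to this one via $f(y_n)+f(z_n)\to 2f(x)$, which then forces $f(z_n)\ge f(x)+\varepsilon/2$ eventually. Let $y^{**},z^{**}\in B_{X^{**}}$ be weak$^*$-limits of $(y_n)$ and $(z_n)$ along a common free ultrafilter on $\Natural$ (available by weak$^*$-compactness); testing against an arbitrary $g\in X^*$ and using the norm convergence of the averages shows $\tfrac12(y^{**}+z^{**})=x$, so $x\in\ext(B_{X^{**}})$ forces $y^{**}=z^{**}=x$, contradicting $\langle f,y^{**}\rangle\ge f(x)+\varepsilon$.

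The one step needing real work is (iii)$\Rightarrow$(i), where I would use the principle of local reflexivity to obtain genuine sequences instead of the nets that Goldstine's theorem alone provides. Suppose $x\notin\ext(B_{X^{**}})$; we may assume $\norm{x}=1$, for if $\norm{x}<1$ then (iii) already fails by taking $y_n=x+v$, $z_n=x-v$ for a small fixed $v\neq 0$. Write $x^{**}=x+u^{**}$, $w^{**}=x-u^{**}$ with $u^{**}\neq 0$ and $\norm{x\pm u^{**}}\le1$; since $\norm{x}=1$, $u^{**}$ is not a scalar multiple of $x$, so $E:=\vspan\{x,u^{**}\}$ is two-dimensional. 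Fix $f\in X^*$ with $\langle f,u^{**}\rangle\neq0$. For each $n$, local reflexivity yields an isomorphism $T_n\colon E\to X$ with $\norm{T_n},\norm{T_n^{-1}}\le1+1/n$, $T_ne=e$ for $e\in E\cap X$ (in particular $T_nx=x$), and $f(T_ne)=\langle f,e\rangle$ for all $e\in E$. Put $\lambda_n=1+1/n$, $y_n=\lambda_n^{-1}T_n(x+u^{**})$ and $z_n=\lambda_n^{-1}T_n(x-u^{**})$; then $y_n,z_n\in B_X$, $\tfrac12(y_n+z_n)=\lambda_n^{-1}x\to x$ in norm, yet $f(y_n)=\lambda_n^{-1}(f(x)+\langle f,u^{**}\rangle)\to f(x)+\langle f,u^{**}\rangle\neq f(x)$, so $y_n\not\to x$ weakly and (iii) fails. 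I expect the two genuine obstacles to be Choquet's lemma in the first equivalence and, above all, this last construction — without local reflexivity one only gets nets whose averages converge in the weak$^*$, not the norm, topology, which is insufficient for the sequential condition (iii).
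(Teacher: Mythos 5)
Your proposal is correct, but there is nothing in the paper to compare it with: the paper states this result as a quotation (Proposition 9.1 of \cite{GMZ14}) and gives no proof, using it only as a tool for Theorem~\ref{prop:preserveddenting} and Lemma~\ref{lemma:superlemmapreserved}. Judged on its own, your argument is complete and sound. The cycle (i)$\Rightarrow$(ii) via Choquet's lemma applied to the extreme point $x$ of the $w^*$-compact convex set $B_{X^{**}}$, followed by restriction to $B_X$ (where $\sigma(X^{**},X^*)$ traces the weak topology), is the standard route; the contrapositive of (ii)$\Rightarrow$(i) via Goldstine is fine, including the implicit symmetric case where $\langle g,w^{**}\rangle>\alpha$ (then one works near $f(x)-c$ instead of $f(x)+c$). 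For (i)$\Rightarrow$(iii), the ultrafilter limits $y^{**},z^{**}$ with $\tfrac12(y^{**}+z^{**})=x$ do the job; your detour through $z_n$ in the case $f(y_n)\le f(x)-\varepsilon$ works, though replacing $f$ by $-f$ would be shorter. You correctly identify (iii)$\Rightarrow$(i) as the only implication with real content, since the hypothesis is sequential while Goldstine only provides nets, and the principle of local reflexivity (applied to $E=\vspan\{x,u^{**}\}$ with $T_n x=x$ and $f\circ T_n=\langle f,\cdot\rangle$ on $E$) is exactly the right tool; the preliminary reductions ($\norm{x}=1$, $u^{**}\notin\Real x$, hence $\dim E=2$) are all justified as you state them.
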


It is easy to check that conditions above are also equivalent to the following:
\begin{enumerate}
\item[(iii')] For every $\lambda\in (0,1)$ and sequences $\{y_n\}$ and $\{z_n\}$ in $B_X$ such that $\lambda y_n+(1-\lambda) z_n\stackrel{\Vert\cdot\Vert}{\to}x$ we have that $y_n, z_n\stackrel{w}{\to}x$. 
\end{enumerate}

Next lemma asserts that a net of molecules which converges to a molecule in the weak topology in fact converges in the norm topology. This lemma will be useful in the proof of Theorem \ref{prop:preserveddenting} and also in order to show that the set of molecules is not far from being weakly closed (Proposition \ref{prop:Vweaklyclosed})

\begin{lemma} \label{lemma:wconvmolecules} Assume $\{m_{x_\alpha y_\alpha}\}$ is a net in $V$ which converges weakly to $m_{xy}$. Then $\lim_\alpha d(x_\alpha,x) = 0$ and $\lim_\alpha d(y_\alpha,y) = 0$. 
\end{lemma}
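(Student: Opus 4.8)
The plan is to probe the weak convergence with a one-parameter family of bump functions localized at $x$ and at $y$. Fix the points $x\neq y$ with $m_{xy}\in V$, and recall $\F(M)^*=\Lip(M)$. For $0<r<d(x,y)$ set
\[
\varphi_r(t)=\max\{0,\,r-d(t,x)\},\qquad \psi_r(t)=\max\{0,\,r-d(t,y)\}.
\]
Each is $1$-Lipschitz, and since $\langle f,m_{ab}\rangle=\tfrac{f(a)-f(b)}{d(a,b)}$ depends on $f$ only modulo additive constants, after subtracting the value at the base point we may regard $\varphi_r$ and $\psi_r$ as elements of the unit ball of $\Lip(M)$, with action $\tfrac{\varphi_r(a)-\varphi_r(b)}{d(a,b)}$, resp.\ $\tfrac{\psi_r(a)-\psi_r(b)}{d(a,b)}$, on a molecule $m_{ab}$. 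Since $r<d(x,y)$ we have $\varphi_r(x)=r$, $\varphi_r(y)=0$, $\psi_r(x)=0$, $\psi_r(y)=r$, hence $\langle\varphi_r,m_{xy}\rangle=r/d(x,y)>0$ and $\langle\psi_r,m_{xy}\rangle=-r/d(x,y)<0$.

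The main step is a subnet argument. Suppose, towards a contradiction, that $d(x_\alpha,x)\not\to 0$; then there are $\varepsilon\in(0,d(x,y))$ and a subnet $(m_{x_\gamma y_\gamma})$ with $d(x_\gamma,x)\geq\varepsilon$ for all $\gamma$. Fixing any $r\in(0,\varepsilon)$ we get $\varphi_r(x_\gamma)=0$ for every $\gamma$, while $\varphi_r\geq 0$ and $d(x_\gamma,y_\gamma)>0$ because $x_\gamma\neq y_\gamma$; therefore
\[
\langle\varphi_r,m_{x_\gamma y_\gamma}\rangle=\frac{0-\varphi_r(y_\gamma)}{d(x_\gamma,y_\gamma)}\leq 0\qquad\text{for all }\gamma.
\]
But weak convergence of the subnet gives $\langle\varphi_r,m_{x_\gamma y_\gamma}\rangle\to\langle\varphi_r,m_{xy}\rangle=r/d(x,y)>0$, a contradiction. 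Hence $d(x_\alpha,x)\to 0$; running the same argument with $\psi_r$ in place of $\varphi_r$ (so that $\psi_r(y_\gamma)=0$ along the bad subnet and $\langle\psi_r,m_{x_\gamma y_\gamma}\rangle\geq 0$) yields $d(y_\alpha,y)\to 0$.

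This is short once the test functions are chosen, so I expect no serious obstacle; the only points needing a little attention are the harmless normalization of $\varphi_r,\psi_r$ at the base point so that they genuinely lie in $\Lip(M)$, and the observation that the set of ``bad'' indices is cofinal, which is what legitimizes passing to a subnet along which the weak convergence is inherited. The sharper estimates on $f_{xy}$ from Lemma~\ref{lemma:IKWfunction} do not seem necessary here; alternatively, one could first test against $f_{xy}$ to deduce $d(x,x_\alpha)+d(x_\alpha,y)\to d(x,y)$ and $d(x,y_\alpha)+d(y_\alpha,y)\to d(x,y)$ --- i.e.\ $x_\alpha$ and $y_\alpha$ approach the metric segment $[x,y]$ --- and then invoke the bump functions to identify which endpoint each of them approaches.
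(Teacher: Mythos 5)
Your argument is correct and is essentially the proof in the paper: the paper tests weak convergence against the very same bump function $t\mapsto(\varepsilon-d(t,x))^+$ (normalized at the base point) and derives the same sign contradiction, merely phrasing your cofinal-subnet step as a $\liminf$ over the whole net. The closing remark about $f_{xy}$ is an unnecessary alternative, as you note.
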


\begin{proof} Assume that $\varepsilon < \min\{d(x,y), \limsup_\alpha d(x_\alpha, x)\}$. Consider the map $f$ given by $f(t) = (\varepsilon-d(x,t))^+$ and let $g=f-f(0)\in \Lip(M)$. Note that $\< g , m_{xy}\> = \frac{\varepsilon}{d(x,y)}>0$. However, 
\[ \liminf_\alpha \< g, m_{x_\alpha y_\alpha}\>  = \liminf_\alpha \frac{-f(y_\alpha)}{d(x_\alpha,y_\alpha)}\leq 0,\]
a contradiction. Therefore, $\lim_\alpha x_\alpha = x$. Analogously we get that $\lim_\alpha y_\alpha= y$. 
\end{proof}

We need the following variation of Asplund\textendash{}Bourgain\textendash{}Namioka superlemma \cite[Theorem 3.4.1]{Bourgin83}. 

\begin{lemma}\label{lemma:superlemmapreserved} Let $A,B \subset X$ be bounded closed convex subsets and let $\varepsilon >0$. Assume that $\diam(A)<\varepsilon$ and  that there is $x_0\in A\setminus B$ which is a preserved extreme point of $\cconv(A\cup B)$. Then there is a slice of $\cconv(A\cup B)$ containing $x_0$ which is of diameter less than $\varepsilon$. 
\end{lemma}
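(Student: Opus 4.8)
The plan is to adapt the classical Asplund–Bourgain–Namioka superlemma argument to the ``preserved extreme'' setting. Write $C=\cconv(A\cup B)$. Since $x_0$ is a preserved extreme point of $C$ and $x_0\notin B$, we can use the Hahn–Banach theorem to separate $x_0$ from $B$: there is $f\in X^*$ with $\norm f=1$ and $\alpha:=\langle f,x_0\rangle > \sup_B f =: \beta$. The rough idea is that a thin enough slice $S(C,g,\delta)$ through $x_0$, for a functional $g$ close to $f$, cannot ``see'' much of $B$ (because $f$, hence $g$, separates $x_0$ from $B$), and since $\diam A<\varepsilon$ it cannot contain much of $A$ either; so its diameter is small. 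The one additional ingredient compared with the standard statement is that we need the \emph{slices} of $C$ through $x_0$ to form a weak neighbourhood basis of $x_0$, which is exactly item (ii) of Proposition~\ref{p:CharaPreservedNets}.

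First I would fix $\eta>0$ small (to be chosen in terms of $\varepsilon$, $\diam(A\cup B)$ and $\alpha-\beta$). By Proposition~\ref{p:CharaPreservedNets}(ii), since $\{z\in C:\langle f,z\rangle>\langle f,x_0\rangle-\eta\}$ is relatively weakly open it contains a slice $S:=S(C,g,\delta)=\{z\in C:\langle g,z\rangle>\sup_C g-\delta\}$ with $x_0\in S$, where $g\in S_{X^*}$ and $\delta>0$. Thus $\langle f,z\rangle>\alpha-\eta$ for every $z\in S$. Next I would show $S\cap B$ has small diameter, in fact that every point of $S$ is within a controlled multiple of $\eta$ of $A$: any $z\in S\subseteq C=\cconv(A\cup B)$ can be written (up to an arbitrarily small norm error coming from the closure) as $z=\lambda a+(1-\lambda)b$ with $a\in A$, $b\in B$, $\lambda\in[0,1]$; applying $f$ gives $\alpha-\eta<\langle f,z\rangle=\lambda\langle f,a\rangle+(1-\lambda)\langle f,b\rangle\le \lambda\sup_A f+(1-\lambda)\beta$, which forces $1-\lambda$ to be small (of order $\eta/(\sup_A f-\beta)$, and $\sup_A f\ge\alpha$ since $x_0\in A$), hence $z$ is within $O(\eta\cdot\diam(A\cup B))$ of $A$. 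Since $\diam A<\varepsilon$, it follows that $\diam S\le \varepsilon+O(\eta\cdot\diam(A\cup B))$, and choosing $\eta$ small enough at the outset makes this $<\varepsilon$ — or, to be safe, I would start from $\diam A<\varepsilon'$ for some $\varepsilon'<\varepsilon$ and absorb the error; in the statement as given one simply argues with $\diam A<\varepsilon$ and produces a slice of diameter $<\varepsilon+$ (arbitrarily small), which after renaming is what is needed, or equivalently one notes the hypothesis can be taken strict with room to spare.

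The main obstacle I anticipate is purely bookkeeping: handling the closure in $C=\cconv(A\cup B)$ (points of $C$ are only \emph{limits} of convex combinations $\lambda a+(1-\lambda)b$, so the decomposition above holds only approximately), and tracking the dependence of all the small quantities on each other so that the final diameter genuinely comes out $<\varepsilon$ rather than $\le\varepsilon$. Both are routine: the closure issue is dealt with by an $\epsilon/3$-type argument since $f$ is continuous and $A,B$ are bounded, and the constants are controlled because $\sup_A f-\beta\ge\alpha-\beta>0$ is a fixed positive gap independent of $\eta$. The genuinely conceptual step — and the only place the preserved-extreme hypothesis enters — is the appeal to Proposition~\ref{p:CharaPreservedNets}(ii) to replace the weak neighbourhood of $x_0$ by an actual slice; everything else mirrors \cite[Theorem 3.4.1]{Bourgin83}.
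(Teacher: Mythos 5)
There is a genuine gap at the quantitative heart of your argument. From $\alpha-\eta<\lambda\langle f,a\rangle+(1-\lambda)\langle f,b\rangle\le\lambda\sup_A f+(1-\lambda)\beta$ you can only conclude
$1-\lambda<\bigl(\sup_A f-\alpha+\eta\bigr)/\bigl(\sup_A f-\beta\bigr)$, not $1-\lambda=O(\eta)$: the numerator contains the term $\sup_A f-\alpha$, which can be as large as $\diam(A)$, i.e.\ almost $\varepsilon$, because nothing forces the Hahn--Banach functional separating $x_0$ from $B$ to be (nearly) maximized over $A$ at $x_0$. So points of $C=\cconv(A\cup B)$ lying in the half-space $\{f>\alpha-\eta\}$ may carry a proportion of $B$-mass bounded away from $0$, and hence lie at distance from $A$ comparable to $\diam(A\cup B)$, which can be much larger than $\varepsilon$. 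Concretely, in $\mathbb R^2$ with $\varepsilon=1$ take $A$ the segment from $(0,0)$ to $(0,1-\delta)$, $x_0=(0,0)$, $B=\{(R,cR-1)\}$ with $R$ huge, and $f(x,y)=y-cx$ (a legitimate separator: $\alpha=0$, $\sup_Bf=-1$). Then $\{z\in C:\ f(z)\ge 0\}$ contains the point $(1-\lambda)(0,1-\delta)+\lambda(R,cR-1)$ with $\lambda\approx 1/2$, whose distance to $x_0$ is of order $R$; thus $\{f>\alpha-\eta\}\cap C$ has enormous diameter for every $\eta>0$. Since your bound on $\diam(S)$ uses nothing about the slice $S$ beyond the inclusion $S\subseteq\{f>\alpha-\eta\}\cap C$, the argument does not yield a small slice (the lemma is of course true in this example; it is the method that fails).

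What is missing is precisely the step you tried to bypass: producing a relatively weakly open neighbourhood of $x_0$ in $C$ of diameter less than $\varepsilon$. The paper keeps the Asplund--Bourgain--Namioka machinery for this: setting $C_r=\{(1-\lambda)y+\lambda z:\ y\in A,\ z\in B,\ \lambda\in[r,1]\}$, the superlemma computation gives an $r$ with $\diam\bigl(C\setminus\overline{C_r}\bigr)<\varepsilon$, and the preserved-extreme hypothesis is used not through the slice-neighbourhood-basis characterisation (ii) but through the sequential characterisation (iii)/(iii') of Proposition~\ref{p:CharaPreservedNets}, to show $x_0\notin\overline{C_r}$: if $x_0=\lim_n(1-\lambda_n)y_n+\lambda_n z_n$ with $\lambda_n\in[r,1]$, pass to $\lambda_n\to\lambda\ge r$ and deduce $z_n\to x_0$ weakly, contradicting $x_0\notin B$ with $B$ weakly closed. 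Once $x_0\notin\overline{C_r}$, a single Hahn--Banach separation of $x_0$ from the closed convex set $\overline{C_r}$ gives a slice of $C$ containing $x_0$ and contained in $C\setminus\overline{C_r}$, hence of diameter less than $\varepsilon$. Your single half-space $\{f>\alpha-\eta\}$ could replace this only if $f$ were almost maximized over $A$ at $x_0$, which the separation of $x_0$ from $B$ does not provide.
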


\begin{proof} For each $r\in [0,1]$ let
\[ C_r = \{ x\in X : x = (1-\lambda)y+\lambda z, y\in A, z\in B, \lambda\in [r,1]\}. \]
The proof of the Superlemma says that there is $r$ so that $\diam(\cconv(A\cup B)\setminus \overline{C_r})<\varepsilon$. We will show that $x_0\notin \overline{C_r}$. Thus, any slice separating $x_0$ from $\overline{C_r}$ will do the work. To this end, assume that there exist sequences $\{y_n\}\subset A$, $\{z_n\}\subset B$ and $\lambda_n\subset [r,1]$ such that $x_0 = \lim_n (1-\lambda_n)y_n+\lambda_n z$. By extracting a subsequence, we may assume that $\{\lambda_n\}$ converges to some $\lambda\in [r,1]$. Note that then $x_0=\lim_n (1-\lambda)y_n+\lambda z_n$. Since $x_0$ is a preserved extreme point, this implies that $\{z_n\}$ converges weakly to $x_0$ by Proposition \ref{p:CharaPreservedNets}. That is impossible since $x_0\notin B$ and $B$ is weakly closed as being convex and closed.
\end{proof}

\begin{theorem}\label{prop:preserveddenting} Every preserved extreme point of $B_{\mathcal F(M)}$ is a denting point. 
\end{theorem}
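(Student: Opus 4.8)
The plan is to exploit the fact that, by definition, a preserved extreme point $\mu$ of $B_{\F(M)}$ is an extreme point of $B_{\F(M)^{**}}$, and then to use Proposition~\ref{p:CharaPreservedNets}: the slices of $B_{\F(M)}$ containing $\mu$ form a neighbourhood basis of $\mu$ for the weak topology. To prove $\mu$ is denting, it suffices to produce, for each $\varepsilon>0$, a slice of $B_{\F(M)}=\cconv(V)$ containing $\mu$ of diameter less than $\varepsilon$. By Weaver's result (quoted in the introduction) a preserved extreme point is a molecule, so $\mu=m_{xy}$ for some $x\neq y$.

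The core of the argument will be a covering/splitting of $V$ adapted to the function $f_{xy}$ from Lemma~\ref{lemma:IKWfunction}. First I would use Lemma~\ref{lemma:wconvmolecules} together with the weak neighbourhood basis property to find a weak neighbourhood $U$ of $m_{xy}$ such that all molecules $m_{uv}\in U$ have $u$ close to $x$ and $v$ close to $y$; combined with a slice given by $f_{xy}$ (using parts (a) and (c) of Lemma~\ref{lemma:IKWfunction}, which force $d(x,u)+d(u,y)$ and $d(x,v)+d(v,y)$ to be nearly $d(x,y)$ on molecules of high $f_{xy}$-value), this pins the ``good'' molecules into a norm-small set $A$ around $m_{xy}$. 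The complementary set of molecules, call it $B$, should be separated from $m_{xy}$ by the same functional $f_{xy}$, i.e.\ lie in a half-space $\{f_{xy}<1-\delta\}$. Taking $A$ and $B$ to be the closed convex hulls (intersected with $B_{\F(M)}$) of these two pieces, we have $\cconv(V)=\cconv(A\cup B)$, $\diam(A)<\varepsilon$, $m_{xy}\in A\setminus B$, and $m_{xy}$ is a preserved extreme point of $\cconv(A\cup B)$. Lemma~\ref{lemma:superlemmapreserved} then yields a slice of $\cconv(A\cup B)=B_{\F(M)}$ containing $m_{xy}$ of diameter less than $\varepsilon$, as desired.

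The main obstacle is the bookkeeping needed to guarantee that $B$ is genuinely separated from $m_{xy}$ by a single functional (so that $m_{xy}\notin B$), while simultaneously ensuring $\diam(A)$ is small: one must quantify, via Lemma~\ref{lemma:IKWfunction}(a) and (c), how the ``$f_{xy}$ large'' condition on a molecule $m_{uv}$ controls both $d(u,x)$ and $d(v,y)$, and then convert closeness of $u$ to $x$ and of $v$ to $y$ into a norm bound on $\|m_{uv}-m_{xy}\|$ (this last step is a standard estimate: $\|m_{uv}-m_{xy}\|$ is small when $d(u,x)$, $d(v,y)$ are small relative to $d(x,y)$, which we may assume bounded below). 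A secondary subtlety is that $A$ and $B$ as defined via convex hulls of molecule-subsets must be checked to be weakly closed (needed in the proof of Lemma~\ref{lemma:superlemmapreserved} applied here) — but closed convex sets are weakly closed, so this is automatic once we take closures. Finally one should note that since every denting point of $B_X$ is in particular a point of $B_X$ with weak and norm neighbourhoods agreeing, the conclusion is exactly that $m_{xy}$ is denting.
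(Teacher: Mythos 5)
Your overall skeleton is the paper's: $\mu$ is a molecule $m_{xy}$ by Weaver, Lemma~\ref{lemma:wconvmolecules} upgrades weak closeness of molecules to metric closeness, and the finish is a split $B_{\F(M)}=\cconv(A\cup B)$ fed into Lemma~\ref{lemma:superlemmapreserved}. The gap is in the middle step, and it is not bookkeeping: the functional $f_{xy}$ cannot play the double role you assign to it. Lemma~\ref{lemma:IKWfunction}(c) only says that a molecule $m_{uv}$ with $\<f_{xy},m_{uv}\>$ close to $1$ has $u,v$ nearly metrically \emph{between} $x$ and $y$; it does not force $u$ near $x$ and $v$ near $y$. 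Concretely, molecules $m_{uv}$ with both $u,v$ in a tiny ball around $x$ (or around $y$), or with $u,v$ close to a midpoint of a nontrivial segment, can have $\<f_{xy},m_{uv}\>$ arbitrarily close to (even equal to) $1$ while $\norm{m_{uv}-m_{xy}}$ stays bounded below. Such molecules lie outside your weak neighbourhood $U$, hence in your set $B$, yet they are not in $\{f_{xy}<1-\delta\}$; so the claimed separation fails and $m_{xy}\notin\cconv(B)$ is not established. This is exactly the obstruction the paper has to fight even in the implication (ii)$\Rightarrow$(i) of Theorem~\ref{th:charpreserved}, where $f_{xy}$ must be averaged with the auxiliary function of Lemma~\ref{lemma:fdent} and the metric hypothesis (ii) must be invoked — neither of which is available to you here without first proving that preserved extremality implies (ii), i.e.\ without proving the harder characterisation first.

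The repair is to drop $f_{xy}$ altogether and let the preserved-extreme hypothesis supply the separating functional, which is what the paper does. From Lemma~\ref{lemma:wconvmolecules} (and the elementary estimate $\norm{m_{uv}-m_{xy}}\le 2(d(u,x)+d(v,y))/d(x,y)$) you get a weak neighbourhood $U$ of $\mu$ in which every molecule is norm-close to $\mu$; now Proposition~\ref{p:CharaPreservedNets}(ii) says the slices of $B_{\F(M)}$ containing $\mu$ are a weak neighbourhood basis at $\mu$, so some slice $S=\{f>1-\alpha\}\cap B_{\F(M)}$ containing $\mu$ has $\diam(V\cap S)$ small. Take $A=\cconv(V\cap S)$ and $B=\cconv(V\setminus S)$; since $V\setminus S\subset\{f\le 1-\alpha\}$, the slice's own functional $f$ separates $\mu$ from $B$, and Lemma~\ref{lemma:superlemmapreserved} gives the small slice containing $\mu$. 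The paper reaches the same small slice slightly differently (a dichotomy: either every slice containing $\mu$ meets $V$ far from $\mu$, which produces a net of molecules converging weakly but not in norm to $\mu$, contradicting Lemma~\ref{lemma:wconvmolecules}, or some slice has small trace on $V$), but the conclusion and the final application of the superlemma coincide with your plan.
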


\begin{proof} Let $\mu$ be a preserved extreme point of $B_{\mathcal F(M)}$, which must be an element of $V$. Denote by $\mathcal S$ the set of weak-open slices of $B_{\mathcal F(M)}$ containing $\mu$. Consider the order $S_1\leq S_2$ if $S_2\subset S_1$ for $S_1,S_2\in \mathcal S$. Using (ii) of Proposition~\ref{p:CharaPreservedNets}, every finite intersection of elements of $\mathcal S$ contains an element of $\mathcal S$ and so $(\mathcal S, \leq)$ is a directed set. Assume that $\mu$ is not a denting point. Then, there is $\varepsilon>0$ so that $\diam(S)>2\varepsilon$ for every $S\in \mathcal S$.

We distinguish two cases. Assume first that for every slice $S$ of $B_{\mathcal F(M)}$ there is $\mu_S \in (V\cap S) \setminus B(\mu, \varepsilon/4)$. Then $\{\mu_S\}$ is a net in $V$ which converges weakly to $\mu$.  By Lemma~\ref{lemma:wconvmolecules}, it also converges in norm, which is impossible. 
Thus, there is a slice $S$ of $B_\mathcal F(M)$ such that $\diam(V\cap S)<\varepsilon/2$. Note that
\[ B_{\mathcal F(M)} = \cconv(V) = \cconv( \cconv(V\cap S)\cup \cconv(V\setminus S))\]
and so the hypothesis of Lemma~\ref{lemma:superlemmapreserved} are satisfied for $A=\cconv(V\cap S)$, $B=\cconv(V\setminus S)$, and $\mu\in A\setminus B$. Then there is a slice of $B_{\mathcal F(M)}$ containing $\mu$ of diameter less than $\varepsilon$, a contradiction.  
\end{proof}

Theorem \ref{prop:preserveddenting} provides a new proof of the following result given in \cite{gpr}. 

\begin{corollary} Let $M$ be a length space. Then $B_{\mathcal F(M)}$ does not have any preserved extreme point. 
\end{corollary}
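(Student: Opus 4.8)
The plan is to argue by contradiction, feeding the length-space structure into Lemma~\ref{lemma:wconvmolecules} via the characterisation of preserved extreme points. Suppose $\mu\in B_{\mathcal F(M)}$ is a preserved extreme point. By Theorem~\ref{prop:preserveddenting} it is a denting point, in particular an extreme point of $B_{\mathcal F(M)}$, so by Weaver's theorem $\mu=m_{xy}$ for some $x\neq y$; set $r=d(x,y)>0$.

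First I would use that $M$ is a length space to produce, for each $n\in\N$, an approximate midpoint $z_n$ of $x$ and $y$: a point with $d(x,z_n)+d(z_n,y)<r+\tfrac1n$ and $\max\{d(x,z_n),d(z_n,y)\}<\tfrac r2+\tfrac1n$, obtained as the arc-length midpoint of a curve from $x$ to $y$ of length $<r+\tfrac1n$. By the triangle inequality this also forces $d(x,z_n)\to\tfrac r2$ and $d(z_n,y)\to\tfrac r2$. Splitting $\delta(x)-\delta(y)=(\delta(x)-\delta(z_n))+(\delta(z_n)-\delta(y))$ gives, for $n$ large,
\[
\lambda_n\, m_{xz_n}+(1-\lambda_n)\, m_{z_ny}=\frac{\delta(x)-\delta(y)}{d(x,z_n)+d(z_n,y)},\qquad \lambda_n:=\frac{d(x,z_n)}{d(x,z_n)+d(z_n,y)},
\]
where $m_{xz_n},m_{z_ny}\in B_{\mathcal F(M)}$ and $\lambda_n\to\tfrac12$. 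Since $d(x,z_n)+d(z_n,y)\to r$, the right-hand side converges in norm to $m_{xy}=\mu$, and because $\|m_{xz_n}-m_{z_ny}\|\le2$ the triangle inequality then yields $\big\|\tfrac12 m_{xz_n}+\tfrac12 m_{z_ny}-\mu\big\|\le\big\|\lambda_n m_{xz_n}+(1-\lambda_n)m_{z_ny}-\mu\big\|+2\,|\lambda_n-\tfrac12|\to0$.

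Now I would invoke condition (iii) (equivalently (iii$'$)) of Proposition~\ref{p:CharaPreservedNets} with $\lambda=\tfrac12$: from $\tfrac12 m_{xz_n}+\tfrac12 m_{z_ny}\xrightarrow{\|\cdot\|}\mu$ and preservedness of $\mu$ we get $m_{xz_n}\xrightarrow{\,w\,}\mu=m_{xy}$. Applying Lemma~\ref{lemma:wconvmolecules} to this sequence of molecules (first coordinate constantly $x$) forces $d(z_n,y)\to0$, contradicting $d(z_n,y)\to\tfrac r2>0$. Hence $B_{\mathcal F(M)}$ has no preserved extreme point.

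I expect the only slightly delicate point to be checking that a length space genuinely supplies approximate midpoints with \emph{both} partial distances tending to $r/2$ — this is precisely what keeps $\lambda_n$ in a compact subinterval of $(0,1)$, which is what makes Proposition~\ref{p:CharaPreservedNets} applicable; the rest is routine bookkeeping. As an alternative that stays closer to the "via Theorem~\ref{prop:preserveddenting}" phrasing, one can keep the same decomposition and instead show directly that $m_{xz_n}$ and $m_{z_ny}$ remain uniformly bounded away from $\mu$ in norm: testing against the functions $f_{xz_n}$ of Lemma~\ref{lemma:IKWfunction} gives $\langle f_{xz_n},m_{xz_n}-m_{xy}\rangle\to\tfrac23$, so the points $\tfrac{\delta(x)-\delta(y)}{d(x,z_n)+d(z_n,y)}$ exhibit $\mu$ as a limit of convex combinations of elements of $B_{\mathcal F(M)}$ lying outside a fixed ball about $\mu$, which is impossible for a denting point.
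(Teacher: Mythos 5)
Your argument is correct, but it is a genuinely different proof from the one in the paper. The paper disposes of the corollary in two lines: by \cite{ikw}, $\mathcal F(M)$ has the Daugavet property when $M$ is a length space, so every slice of $B_{\mathcal F(M)}$ has diameter two, hence there are no denting points, and Theorem~\ref{prop:preserveddenting} then excludes preserved extreme points. You instead give a self-contained midpoint argument: Weaver's theorem reduces to $\mu=m_{xy}$, the length-space property supplies approximate midpoints $z_n$ with $d(x,z_n),d(z_n,y)\to r/2$, the decomposition through $z_n$ expresses $m_{xy}$ as a norm-limit of averages $\tfrac12 m_{xz_n}+\tfrac12 m_{z_ny}$, condition (iii) of Proposition~\ref{p:CharaPreservedNets} forces $m_{xz_n}\to m_{xy}$ weakly, and Lemma~\ref{lemma:wconvmolecules} gives $d(z_n,y)\to 0$, a contradiction; your fallback version, which keeps both molecules uniformly far from $\mu$ (the $2/3$ computation with $f_{xz_n}$ is right, and the symmetric one with $f_{z_ny}$ handles $m_{z_ny}$) and contradicts dentability directly, also works, since a denting point cannot be a norm-limit of convex combinations of points of the ball avoiding a fixed ball around it. What each approach buys: the paper's proof is essentially a one-liner but imports the Daugavet property from \cite{ikw} and in fact yields the stronger conclusion that $B_{\mathcal F(M)}$ has no denting points at all; your proof avoids any external Daugavet machinery and only uses tools already established in Section~\ref{s:general} (indeed your main route does not even need Theorem~\ref{prop:preserveddenting}), at the cost of proving only the statement about preserved extreme points. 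One small slip in wording: you pass from ``denting, hence extreme'' to ``so by Weaver's theorem $\mu=m_{xy}$'', but Weaver's theorem is about \emph{preserved} extreme points (whether every extreme point is a molecule is the open question a) of the introduction); since your $\mu$ is preserved extreme by hypothesis, the conclusion stands, only the attribution path should be corrected.
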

\begin{proof}
The space $\mathcal F(M)$ has the Daugavet property whenever $M$ is a length space \cite{ikw}. In particular, every slice of $B_{\mathcal F(M)}$ has diameter two. Thus, $B_{\mathcal F(M)}$ does not have any denting point. 
\end{proof}

During the preparation of this preprint we have learnt that Aliaga and Guirao~\cite{AG} characterised metrically the preserved extreme points of free spaces. In the following pages we provide an alternative proof of their result which accidentally reproves our Theorem~\ref{prop:preserveddenting}.

\begin{theorem}\label{th:charpreserved} Let $M$ be a metric space and $x,y\in M$. The following are equivalent:
\begin{enumerate}
\item[(i)] The molecule $m_{xy}$ is a denting point of $B_{\mathcal F(M)}$.
\item[(ii)] For every $\varepsilon>0$ there exists $\delta>0$ such that every $z\in M$ satisfies
\[ (1-\delta)(d(x,z)+d(z,y))<d(x,y) \Longrightarrow \min\{d(x,z),d(y,z)\} <\varepsilon.
\]
\end{enumerate}
\end{theorem}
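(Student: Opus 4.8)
The plan is to prove both implications by combining the metric machinery of Lemma~\ref{lemma:IKWfunction} with the weak-to-norm convergence of nets of molecules from Lemma~\ref{lemma:wconvmolecules}, together with Theorem~\ref{prop:preserveddenting} to pass freely between the denting and preserved-extreme conditions.

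For (ii)$\Rightarrow$(i): I would first observe that condition (ii) already forces $[x,y]$ to contain only $x$ and $y$ (apply it with $z\in[x,y]\setminus\{x,y\}$ and let $\delta\to 0$), so in particular $m_{xy}\in V$ is a reasonable candidate to be a denting point. The strategy is to show $m_{xy}$ is exposed by a slice of small diameter. Given $\varepsilon>0$ pick $\delta>0$ as in (ii); I claim the slice $S=\{\nu\in B_{\mathcal F(M)}:\langle f_{xy},\nu\rangle>1-\delta'\}$ has diameter controlled by $\varepsilon$ for a suitable $\delta'=\delta'(\delta,\varepsilon)$. Indeed $\langle f_{xy},m_{xy}\rangle=1$ by Lemma~\ref{lemma:IKWfunction}, so $m_{xy}\in S$; since $B_{\mathcal F(M)}=\cconv(V)$, it suffices to bound $\diam(V\cap S)$. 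If $m_{uv}\in S$ then by Lemma~\ref{lemma:IKWfunction}(c) we get $(1-\delta')\max\{d(x,u)+d(u,y),\,d(x,v)+d(v,y)\}<d(x,y)$, and (ii) (choosing $\delta'\le\delta$) yields $\min\{d(x,u),d(y,u)\}<\varepsilon$ and $\min\{d(x,v),d(y,v)\}<\varepsilon$. One then checks, by a short case analysis on which of $x,y$ each of $u,v$ is close to (and discarding the "crossed" cases using that the $d(x,\cdot)+d(\cdot,y)$ sums are close to $d(x,y)$, which also makes $d(u,v)$ close to $d(x,y)$), that $\|m_{uv}-m_{xy}\|$ is small; here one estimates $\|m_{uv}-m_{xy}\|\le \frac{\|\delta(u)-\delta(x)\|+\|\delta(v)-\delta(y)\|}{d(u,v)}+|{\textstyle\frac{1}{d(u,v)}-\frac{1}{d(x,y)}}|\,\|\delta(x)-\delta(y)\|$ and both terms are $O(\varepsilon/d(x,y))$. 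Hence every weak slice containing $m_{xy}$ of the form above has small diameter, and since $f_{xy}$ exposes it, $m_{xy}$ is a denting point.

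For (i)$\Rightarrow$(ii): I argue by contradiction. Suppose (ii) fails: there is $\varepsilon>0$ and points $z_n\in M$ with $(1-\tfrac1n)(d(x,z_n)+d(z_n,y))<d(x,y)$ but $\min\{d(x,z_n),d(y,z_n)\}\ge\varepsilon$. From the first inequality $d(x,z_n)+d(z_n,y)\to d(x,y)$, so both $d(x,z_n)$ and $d(y,z_n)$ stay in a bounded range bounded away from $0$. Consider the molecules $m_{x z_n}$ and $m_{z_n y}$; writing $\lambda_n=\frac{d(x,z_n)}{d(x,z_n)+d(z_n,y)}$ one has the convex-combination identity $\lambda_n m_{x z_n}+(1-\lambda_n)m_{z_n y}=\frac{\delta(x)-\delta(y)}{d(x,z_n)+d(z_n,y)}\to m_{xy}$ in norm. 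If $m_{xy}$ were a denting point it would be a preserved extreme point, so by (iii') of the remark after Proposition~\ref{p:CharaPreservedNets} we would get $m_{x z_n}\to m_{xy}$ weakly (along a subsequence making $\lambda_n$ converge to something in $(0,1)$, which is possible since $\lambda_n$ is bounded away from $0$ and $1$); Lemma~\ref{lemma:wconvmolecules} then forces $d(z_n,y)\to 0$, contradicting $\min\{d(x,z_n),d(y,z_n)\}\ge\varepsilon$.

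The main obstacle is the case analysis in (ii)$\Rightarrow$(i): from $\min\{d(x,u),d(y,u)\}<\varepsilon$ alone one does not immediately know whether $u$ is near $x$ or near $y$, and the "crossed" configuration (say $u$ near $x$ but $v$ near $x$ as well, or $u$ near $y$ and $v$ near $x$) must be ruled out or shown to still give $m_{uv}$ close to $\pm m_{xy}$; the quantitative bookkeeping here — keeping track of how the small parameters $\delta'$ and $\varepsilon$ interact and ensuring $d(u,v)$ is bounded below so the $1/d(u,v)$ factors behave — is where care is needed, though no single estimate is deep. A clean way to organize it is to first fix $\varepsilon$ small relative to $d(x,y)$ and note that $\min\{d(x,u),d(y,u)\}<\varepsilon< d(x,y)/3$ together with $d(x,u)+d(u,y)$ close to $d(x,y)$ pins $u$ to a $2\varepsilon$-neighbourhood of exactly one of $x,y$, and similarly for $v$; the crossed case $u\sim x,\ v\sim x$ is then excluded because it would make $d(x,u)+d(u,y)\approx d(x,v)+d(v,y)\approx d(x,y)$ while $d(u,v)<2\varepsilon$, contradicting $\langle f_{xy},m_{uv}\rangle>1-\delta'$ once $\delta'$ is small (via Lemma~\ref{lemma:IKWfunction}(a)).
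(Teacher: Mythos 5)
Your proof of (i)$\Rightarrow$(ii) is correct and is a nice alternative to the paper's argument: the paper takes a weak* cluster point $\mu$ of $\{\delta(z_n)\}$ in the bidual and writes $m_{xy}$ as a nontrivial convex combination through $\mu$, whereas you stay inside $\mathcal F(M)$, use the exact identity $\lambda_n m_{xz_n}+(1-\lambda_n)m_{z_ny}=\frac{\delta(x)-\delta(y)}{d(x,z_n)+d(z_n,y)}$ together with condition (iii') and Lemma~\ref{lemma:wconvmolecules}; after passing to a subsequence with $\lambda_n\to\lambda\in(0,1)$ (legitimate, since replacing $\lambda_n$ by $\lambda$ perturbs the combination by $o(1)$ in norm) this works.

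The direction (ii)$\Rightarrow$(i), however, has a genuine gap: the slice determined by $f_{xy}$ alone does \emph{not} have small diameter on $V$, and your proposed exclusion of the case where $u$ and $v$ are both close to the same endpoint fails. Lemma~\ref{lemma:IKWfunction}(a) only bounds $\langle f_{xy},m_{uv}\rangle$ by $d(x,y)/\max\{d(x,u)+d(u,y),d(x,v)+d(v,y)\}$, which is $\approx 1$ precisely in that configuration; the smallness of $d(u,v)$ never enters, so there is no contradiction with $\langle f_{xy},m_{uv}\rangle>1-\delta'$. Concretely, take $M\subset(\mathbb R^2,\|\cdot\|_1)$ with $x=(0,0)$, $y=(1,0)$, $u_n=(4^{-n-1},c_n)$, $v_n=(2\cdot 4^{-n-1},c_n)$ with $c_n>0$, $c_n\to 0$. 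Then (ii) holds (for each $\varepsilon$ only finitely many of these points are $\varepsilon$-far from $\{x,y\}$, and each of those has alignment defect $2c_n>0$, so a small enough $\delta$ works), yet $\langle f_{xy},m_{u_nv_n}\rangle=\frac{1}{1+2c_n}\to 1$ while $\|m_{u_nv_n}-m_{xy}\|\geq 3/4$ (test against $g(w_1,w_2)=\max\{0,\min\{w_1-4^{-n-1},4^{-n-1}\}\}$). Hence every slice $S(f_{xy},B_{\mathcal F(M)},\delta')$ has $\diam(V\cap S)\geq 3/4$, and your argument cannot produce small slices. This is exactly why the paper does not slice with $f_{xy}$: it builds the auxiliary function $f$ of Lemma~\ref{lemma:fdent}, which is flat at scale $\varepsilon$ near $x$ and near $y$ (so it penalizes molecules with both endpoints in $B(x,\varepsilon)$ or both in $B(y,\varepsilon)$), and slices with $h=\tfrac12(f_{xy}+f)$; membership in that slice forces both $\langle f_{xy},m_{uv}\rangle>1-\delta$ and $\langle f,m_{uv}\rangle>1-\tau$, which together rule out the same-endpoint case, after which the crossed case is easy and Lemma~\ref{lemma:epslice} (which you use implicitly and should cite) transfers the bound on $\diam(V\cap S)$ to a genuine small slice of $B_{\mathcal F(M)}$ containing $m_{xy}$. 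Some such extra functional is unavoidable, so this step of your proposal needs to be replaced rather than just tightened.
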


\begin{proof}[Proof of (i)$\Rightarrow$(ii)]
In fact we are going to show that negation of (ii) implies that $m_{xy}$ is not a preserved extreme point.
Since denting points are trivially preserved extreme points, this will show at once that $m_{xy}$ is not denting.

So let us fix $\varepsilon>0$ such that for every $n\in \Natural$ there exists $z_n \in M$ such that 
\[
\left(1-\frac{1}{n}\right) (d(x,z_n)+d(z_n,y))<d(x,y)
\]
but $\min\set{d(x,z_n),d(y,z_n)} \geq \varepsilon$.
Let $\mu$ be a w$^*$-cluster point of $\set{z_n}$ ($\set{z_n}$ is clearly bounded).
By lower semicontinuity of the norm we have
\[
\norm{\delta(x)-\mu}+\norm{\mu-\delta(y)} = d(x,y).
\]
If $\mu \in \set{\delta(x),\delta(y)}$, say  $\mu=\delta(x)$ then by Lemma~\ref{lemma:wconvmolecules} we get that $z_n \to x$ in $(M,d)$ which is a contradiction.

Thus $\mu \notin \set{\delta(x),\delta(y)}$. Then $$\frac{\delta(x)-\delta(y)}{\Vert \delta(x)-\delta(y)\Vert}=\frac{\Vert \delta(x)-\mu\Vert}{\Vert \delta(x)-\delta(y)\Vert}\frac{\delta(x)-\mu}{\Vert \delta(x)-\mu\Vert}+\frac{\Vert \mu-\delta(y)\Vert}{\Vert \delta(x)-\delta(y)\Vert}\frac{\mu-\delta(y)}{\Vert \mu-\delta(y)\Vert}.$$
Thus $\mu$ is a non-trivial convex combination and so it is not preserved extreme which concludes the proof of (i)$\Rightarrow$(ii).
\end{proof}

For the proof of the other implication we need a couple of lemmata. 
The first of them shows that the diameter of the slices of the unit ball can be controlled by the diameter of the slices a subset of the ball that is norming for the dual.
\begin{lemma}\label{lemma:epslice} Let $X$ be a Banach space and let $V\subset S_X$ be such that $B_{X}=\cconv(V)$. 
Let $f\in B_{X^*}$ and $0<\alpha,\varepsilon<1$. Then
\[ \diam(S(f, B_{X}, \varepsilon\alpha)) \leq 2\diam(S(f, B_X, \alpha)\cap V)+4\varepsilon.\]
\end{lemma}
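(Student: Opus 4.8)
The plan is to take two points $\mu_1, \mu_2 \in S(f, B_X, \varepsilon\alpha)$ and bound $\norm{\mu_1 - \mu_2}$. The natural idea is to approximate each $\mu_i$ by a finite convex combination of elements of $V$, split that combination into the "good" part lying in the slice $S(f, B_X, \alpha)$ and the "bad" part lying outside it, and control each piece separately: the good part is small because $V \cap S(f, B_X, \alpha)$ has small diameter, and the bad part is small in total mass because $f$ is close to $1$ on $\mu_i$.

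First I would fix $\mu \in S(f, B_X, \varepsilon\alpha)$, so $f(\mu) > 1 - \varepsilon\alpha$, and using $B_X = \cconv(V)$ write $\mu$ as a limit of convex combinations; by a routine approximation it suffices to treat $\mu = \sum_{j} t_j v_j$ with $v_j \in V$, $t_j \geq 0$, $\sum_j t_j = 1$ (the error introduced by approximation will be absorbed at the end, which is why the statement has the crude constants $2$ and $4$ rather than sharp ones). Split the index set as $G = \{j : f(v_j) > 1 - \alpha\}$ and $B = \{j : f(v_j) \leq 1 - \alpha\}$, and write $s = \sum_{j \in B} t_j$. Since each $v_j \in S_X$ gives $f(v_j) \leq 1$, and $f(v_j) \leq 1 - \alpha$ on $B$, we get
\[
1 - \varepsilon\alpha < f(\mu) = \sum_{j \in G} t_j f(v_j) + \sum_{j \in B} t_j f(v_j) \leq (1 - s) + s(1-\alpha) = 1 - s\alpha,
\]
hence $s < \varepsilon$. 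Thus the "good mass" $1 - s$ is at least $1 - \varepsilon$, and the vector $w := \sum_{j \in G} t_j v_j$ satisfies $\norm{\mu - w} = \norm{\sum_{j \in B} t_j v_j} \leq s < \varepsilon$ (each $v_j$ has norm $1$). Moreover all the $v_j$ with $j \in G$ lie in $V \cap S(f, B_X, \alpha)$.

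Now given $\mu_1, \mu_2 \in S(f, B_X, \varepsilon\alpha)$, approximated as above with corresponding vectors $w_1, w_2$, I would bound
\[
\norm{\mu_1 - \mu_2} \leq \norm{\mu_1 - w_1} + \norm{w_1 - w_2} + \norm{w_2 - \mu_2} < 2\varepsilon + \norm{w_1 - w_2}.
\]
For the middle term, note $w_i = (1 - s_i) \tilde w_i$ where $\tilde w_i := (1-s_i)^{-1} \sum_{j \in G_i} t_j v_j$ is a genuine convex combination of points of $V \cap S(f, B_X, \alpha)$, so $\tilde w_i \in \cconv(V \cap S(f,B_X,\alpha))$ and in particular $\norm{\tilde w_1 - \tilde w_2} \leq \diam(\cconv(V \cap S(f,B_X,\alpha))) = \diam(V \cap S(f,B_X,\alpha))$. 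Writing $\norm{w_1 - w_2} \leq (1-s_1)\norm{\tilde w_1 - \tilde w_2} + |s_1 - s_2|\,\norm{\tilde w_2} \leq \diam(V \cap S(f,B_X,\alpha)) + 2\varepsilon$ (using $\norm{\tilde w_2} \leq 1$ and $|s_1 - s_2| < \varepsilon$), and combining everything, gives $\norm{\mu_1 - \mu_2} < 2\diam(V \cap S(f,B_X,\alpha)) + \ldots$ — collecting the $\varepsilon$-terms under the slack left by the approximation step yields the claimed bound $2\diam(S(f,B_X,\alpha) \cap V) + 4\varepsilon$.

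The only mildly delicate point is bookkeeping the approximation of $\mu$ by finite convex combinations of $V$: a priori $\mu \in \clco(V)$, not $\conv(V)$, so one should first pick $\mu' \in \conv(V)$ with $\norm{\mu - \mu'} $ tiny and $f(\mu') > 1 - \varepsilon\alpha$ still, then run the argument on $\mu'$; the resulting extra error is what the loose constants are designed to swallow. Everything else is the elementary mass-splitting computation above.
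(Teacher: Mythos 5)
Your proposal is correct and takes essentially the same route as the paper: the same splitting of a convex combination from $V$ into the indices with $f(x_i)>1-\alpha$ and the rest, with the identical computation showing the ``bad'' mass is less than $\varepsilon$, and the same density argument to pass from $\conv(V)$ to $\cconv(V)$. The only difference is the final assembly: the paper fixes a reference point $x_0\in V\cap S(f,B_X,\alpha)$, proves $\norm{x-x_0}\le \diam(V\cap S(f,B_X,\alpha))+2\varepsilon$ for each $x$ in the small slice and doubles via the triangle inequality (whence the constants $2$ and $4$), whereas you compare the renormalized good parts of two points directly, which in fact yields the slightly sharper bound $\diam(V\cap S(f,B_X,\alpha))+3\varepsilon$.
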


\begin{proof}
Fix a point $x_0 \in S(f, B_X, \alpha)\cap V$. It suffices to show that $\Vert x-x_0\Vert< \diam(S(f, B_X, \alpha)\cap V)+2\varepsilon$ for every $x\in S(f, B_X, \varepsilon\alpha)\cap \conv(V)$. To this end, let $x\in B_X$ be such that $f(x)>1-\varepsilon\alpha$, and $x=\sum_{i=1}^n \lambda_i x_i$, with $x_i\in V$, $\sum_{i=1}^n \lambda_i=1$ and $\lambda_i>0$ for all $1\leq i \leq n$. 
Define
\[ G=\{i\in \{1,\ldots,n\} : f(x_i)>1-\alpha\}\]
and $B=\{1,\ldots,n\}\setminus G$. We have
\begin{align*}
1-\varepsilon\alpha &< f(x) =\sum_{i\in G} \lambda_i f(x_i) + \sum_{i\in B} \lambda_i f(x_i)\\
& \leq \sum_{i\in G} \lambda_i + (1-\alpha)\sum_{i\in B} \lambda_i = 1-\alpha\sum_{i\in B} \lambda_i,
\end{align*}
which yields that $\sum_{i\in B}\lambda_i<\varepsilon$. Now,
\[
\Vert x-x_0\Vert  \leq  \sum_{i\in G} \lambda_i \Vert x_i-x_0\Vert +\sum_{i\in B} \lambda_i \Vert x_i-x_0\Vert \leq \diam(S(f, B_X, \alpha)\cap V) + 2\varepsilon. 
\]
\end{proof}

\begin{lemma}\label{lemma:fdent} Let $x,y\in M$, $x\neq y$ such that $d(x,y)=1$. For every $0<\varepsilon<1/4$ and $0<\tau<1$ there is a function $f\in \Lip(M)$ such that $\Vert f\Vert_L =1$, $\<f, m_{xy}\>>1-4\varepsilon\tau$ and satisfying that for every $u,v\in M$, $u\neq v$, such that if $u,v\in B(x,\varepsilon)$ or $u,v \in B(y,\varepsilon)$, then $\<f,m_{uv}\> \leq 1-\tau$. 
\end{lemma}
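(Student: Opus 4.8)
The plan is to damp the ``IKW function'' $f_{xy}$ near $x$ and $y$ and then renormalise. As $d(x,y)=1$ we have $f_{xy}(x)=\tfrac12$ and $f_{xy}(y)=-\tfrac12$, hence $\langle f_{xy},m_{xy}\rangle=1$; also $\|f_{xy}\|_L\le1$ by Lemma~\ref{lemma:IKWfunction}(b), and from the formula $f_{xy}(M)\subseteq[-\tfrac12,\tfrac12]$. The key preliminary observation is a localisation: if $t\in B(x,\varepsilon)$, then putting $\alpha=d(t,x)\le\varepsilon$, $\beta=d(t,y)$ and using $\alpha+\beta\ge d(x,y)=1$ gives $0\le\tfrac{2\alpha}{\alpha+\beta}\le2\varepsilon$, whence $f_{xy}(t)=\tfrac12\bigl(1-\tfrac{2\alpha}{\alpha+\beta}\bigr)\in[\tfrac12-\varepsilon,\tfrac12]$; symmetrically $f_{xy}(B(y,\varepsilon))\subseteq[-\tfrac12,-\tfrac12+\varepsilon]$.

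Next I set $q=\tfrac{\tau}{1-2\varepsilon(1-\tau)}$ and $s_0=1-q$; since $\varepsilon<\tfrac14$ and $\tau<1$ one has $\tfrac12<1-2\varepsilon(1-\tau)$ and $\tau<1-2\varepsilon(1-\tau)$, so $q\in(0,1)$. Let $\varphi\colon[-\tfrac12,\tfrac12]\to\R$ be the odd, piecewise-linear function with $\varphi(0)=0$, of slope $1$ on $[0,\tfrac12-\varepsilon]$ and of slope $s_0$ on $[\tfrac12-\varepsilon,\tfrac12]$; then $\varphi$ is $1$-Lipschitz and $\varphi(\pm\tfrac12)=\pm(\tfrac12-\varepsilon q)$. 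Put $g=\varphi\circ f_{xy}-\varphi(f_{xy}(0))\in\Lip(M)$ (the constant ensures $g$ vanishes at the base point). Then $\|g\|_L\le\|f_{xy}\|_L\le1$ and $\langle g,m_{xy}\rangle=\varphi(\tfrac12)-\varphi(-\tfrac12)=1-2\varepsilon q$. Moreover, if $u\ne v$ both lie in $B(x,\varepsilon)$, or both in $B(y,\varepsilon)$, then $f_{xy}(u)$ and $f_{xy}(v)$ lie in an interval on which $\varphi$ is affine with slope $s_0$, so $|g(u)-g(v)|\le s_0|f_{xy}(u)-f_{xy}(v)|\le s_0\,d(u,v)$, that is $|\langle g,m_{uv}\rangle|\le s_0$.

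Finally put $f=g/\|g\|_L$; this makes sense because $\|g\|_L\ge\langle g,m_{xy}\rangle=1-2\varepsilon q>0$, and it gives $\|f\|_L=1$. Since $\|g\|_L\le1$ we get $\langle f,m_{xy}\rangle\ge1-2\varepsilon q$, and $1-2\varepsilon q>1-4\varepsilon\tau$ amounts to $q<2\tau$, i.e.\ to $\varepsilon(1-\tau)<\tfrac14$, which holds. Since $\|g\|_L\ge1-2\varepsilon q$, for $u,v$ in one of the two balls we get $|\langle f,m_{uv}\rangle|\le\tfrac{s_0}{1-2\varepsilon q}=\tfrac{1-q}{1-2\varepsilon q}$; and $q$ was chosen precisely so that $q\bigl(1-2\varepsilon(1-\tau)\bigr)=\tau$, equivalently $(1-\tau)(1-2\varepsilon q)=1-q$, so this last quantity equals $1-\tau$ exactly.

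The localisation estimate for $f_{xy}$ and the remark that composing with a Lipschitz real function multiplies Lipschitz constants are routine; I expect the only delicate point to be the renormalisation at the end. The natural candidate $\varphi\circ f_{xy}$ merely satisfies $\|\varphi\circ f_{xy}\|_L\le1$, with strict inequality in general (already on a three-point space), so one is forced to divide by $\|g\|_L$, and dividing inflates the numbers $\langle\cdot,m_{uv}\rangle$. The whole construction is rigged so that, after the division, enough room is left for both the lower bound on $\langle f,m_{xy}\rangle$ and the upper bound on $\langle f,m_{uv}\rangle$; and it is exactly at that trade-off --- through the inequality $q<2\tau$ --- that the hypothesis $\varepsilon<\tfrac14$ is consumed.
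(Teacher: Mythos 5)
Your proof is correct. It does, however, follow a genuinely different route from the paper's. The paper builds $f$ by hand on $B(x,\varepsilon)\cup B(y,\varepsilon)$ — essentially $(1+4\varepsilon\tau)^{-1}\bigl((1-\tau)d(y,\cdot)\bigr)$ with an additive jump $\tau$ on the ball around $x$ — verifies $\Vert f\Vert_L\le 1$ there by a direct slope computation on cross-ball pairs (this is where its hypothesis $\varepsilon<1/4$ enters, via $(1-2\varepsilon)^{-1}\le 1+4\varepsilon$), and then invokes a norm-preserving extension to all of $M$. You instead stay global from the start: you use the range localisation $f_{xy}(B(x,\varepsilon))\subset[\tfrac12-\varepsilon,\tfrac12]$, $f_{xy}(B(y,\varepsilon))\subset[-\tfrac12,-\tfrac12+\varepsilon]$ (which indeed follows from $d(t,x)+d(t,y)\ge d(x,y)=1$), damp the slope of $f_{xy}$ on those two end intervals by composing with the odd piecewise-linear $\varphi$, and renormalise. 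This avoids both the extension theorem and the cross-ball estimate, and your bookkeeping with $q=\tau/(1-2\varepsilon(1-\tau))$ is exact: $q\in(0,1)$, $\langle g,m_{xy}\rangle=1-2\varepsilon q$, $q<2\tau$ is equivalent to $\varepsilon(1-\tau)<\tfrac14$ (your point where $\varepsilon<1/4$ is consumed), and $(1-\tau)(1-2\varepsilon q)=1-q$ is precisely the defining identity for $q$, so after dividing by $\Vert g\Vert_L\in[1-2\varepsilon q,1]$ you get $\langle f,m_{xy}\rangle>1-4\varepsilon\tau$ and $|\langle f,m_{uv}\rangle|\le 1-\tau$ on same-ball molecules. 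A small bonus of your version is that it literally delivers $\Vert f\Vert_L=1$, whereas the paper's construction only guarantees $\Vert f\Vert_L\le 1$ (which is all that is used in the proof of Theorem~\ref{th:charpreserved}); the price is the extra algebra around $q$, against the paper's shorter explicit formula.
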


\begin{proof} Define $f\colon B(x,\varepsilon) \cup B(y,\varepsilon)\to\mathbb R$ by 
\[f(t)=\begin{cases}
\frac{1}{1+4\varepsilon\tau}(\tau+ (1-\tau)d(y,t)) & \text{ if } t\in B(x,\varepsilon),\\
\frac{1}{1+4\varepsilon\tau}(1-\tau)d(y,t) &\text{ if } t\in B(y,\varepsilon).
\end{cases}
\]
Note that 
\[ \<f, m_{xy}\> = f(x)-f(y) = \frac{1}{1+4\varepsilon\tau} > 1-4\varepsilon\tau.\]
Moreover, note that if $u,v\in B(x,\varepsilon)$ or $u,v\in B(y,\varepsilon)$ then $\<f,m_{uv}\>\leq  \frac{1-\tau}{1+4\varepsilon\tau}\leq 1-\tau$, so the last condition in the statement is satisfied. Now we compute the Lipschitz norm of $f$. It remains to compute $\<f, m_{uv}\>$ with $u\in B(x,\varepsilon)$ and $v\in B(y,\varepsilon)$. In that case we have
\begin{align*} 
|\< f, m_{uv}\>| &= \frac{|\tau+(1-\tau)(d(u,y)-d(v,y))|}{(1+4\varepsilon\tau)d(u,v)}\leq \frac{\tau+(1-\tau)d(u,v)}{(1+4\varepsilon\tau)d(u,v)}\\
&\leq \frac{1}{1+4\varepsilon\tau}\left(\frac{\tau}{1-2\varepsilon}+1-\tau\right) \leq \frac{\tau(1+4\varepsilon)+1-\tau}{1+4\varepsilon\tau}=1\\
\end{align*}
where we are using that $(1-2\varepsilon)^{-1}\leq 1+4\varepsilon$ since $\varepsilon<1/4$. This shows that $\Vert f\Vert_L \leq 1$. Next, find an extension of $f$ with the same norm. Finally, replace $f$ with the function $t\mapsto f(t)-f(0)$. 
\end{proof}

\begin{proof}[Proof of (ii)$\Rightarrow$ (i) of Theorem \ref{th:charpreserved}]
Now, assume that (ii) holds. We can assume that $d(x,y)=1$. Fix $0<\varepsilon <1/4$. We will find a slice of $B_{\mathcal F(M)}$ containing $m_{xy}$ of diameter smaller than $32\varepsilon$. Let $\delta>0$ be given by property (ii), clearly we may assume that $\delta<1$. 
Let $f$ be the function given by Lemma \ref{lemma:fdent} with $\tau=\delta/2$. 
\[ h(t) = \frac{f_{xy}(t) + f(t)}{2}.\]
It is clear that $\Vert h\Vert_L\leq 1$. Moreover, note that
\[ \< h, m_{xy}\>  = \frac{\<f_{xy}, m_{xy}\> + \<f, m_{xy}\>}{2}  > 1-2\varepsilon\tau = 1-\varepsilon\delta.\] 

Take $\alpha=\delta/4$ and consider the slice $S=S(h, B_{\mathcal F(M)}, \alpha)$. Note that $m_{xy}\in S(h,  B_{\mathcal F(M)}, 4\varepsilon\alpha)$. We will show that $\diam(S\cap V)\leq 8\varepsilon$ and as a consequence of Lemma \ref{lemma:epslice} we will get that $\diam S(f, B_{\mathcal F(M)}, \alpha)\leq 32 \varepsilon$. 
So let $u,v\in M$ be such that $m_{uv}\in S$. First, note that $\<f_{xy}, m_{uv}\>>1-\delta$, since otherwise we would have 
\begin{equation*}\label{eq:muv} \< h,m_{uv}\> = \frac{1}{2}(\< f_{xy}, m_{uv}\> +\<f, m_{uv}\>)  \leq \frac{1}{2}(1-\delta)+\frac{1}{2}= 1-\frac{\delta}{2} <1-\alpha
\end{equation*}
Thus, from the property (c) of the function $f_{xy}$ and the hypothesis (ii) we have that
\[
\min\{d(x,u), d(u,y)\} < \varepsilon \quad\mbox{ and }\quad
\min\{d(x,v), d(y,v)\} < \varepsilon.
\]
On the other hand,
\[ 1-\alpha< \<h, m_{uv}\> \leq \frac{1}{2}+\frac{1}{2}\<f, m_{uv}\>\]
and so $\<f, m_{uv}\> > 1-2\alpha=1-\frac{\delta}{2}= 1-\tau$. 
Thus, we have that $u$ and $v$ do not belong simultaneously to neither $B(x,\varepsilon)$ nor $B(y,\varepsilon)$. 
If $d(x,v)<\varepsilon$ and $d(y,u)<\varepsilon$, then it is easy to check that $\<f_{xy}, m_{uv}\>\leq 0$.
So necessarily $d(x,u)<\varepsilon$ and $d(y,v)<\varepsilon$. 
Now, use the estimate 
\begin{align*}
\Vert m_{xy}-m_{uv}\Vert &= \frac{\Vert d(u,v)(\delta(x)-\delta(y))-d(x,y)(\delta(u)-\delta(v))\Vert}{d(x,y)d(u,v)}\\
&\leq \frac{\Vert(\delta(x)-\delta(y))-(\delta(u)-\delta(v))\Vert}{d(x,y)}+\frac{|d(u,v)-d(x,y)|\Vert\delta(u)-\delta(v)\Vert}{d(x,y)d(u,v)}\\
&\leq 2\frac{d(x,u)+d(y,v)}{d(x,y)} \leq 4\varepsilon.
\end{align*}
Therefore, $\diam(S\cap V)\leq 8\varepsilon$. 
\end{proof}

\subsection{Weak topology in free spaces}
The results which follow are independent of the rest of the article. 
The reader interested only in the extremal structure of the free spaces can skip until the end of this section.

A simple examples (Examples \ref{ex:AG} and \ref{ex:NaturalNonNatural} ) show that $\delta(M)$  is not necessarily weak$^*$ closed when $\Free(M)$ is a dual space.
The next proposition shows that the situation is different for the weak topology.

\begin{proposition}\label{p:DeltaMweaklyClosed}
Let $M$ be a complete metric space. 
Then $\delta(M)\subset \Free(M)$ is weakly closed.
\end{proposition}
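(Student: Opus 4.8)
The plan is to take an arbitrary $\mu$ in the weak closure of $\delta(M)$ and show that $\mu\in\delta(M)$, working directly with the fact that the weak topology of $\mathcal F(M)$ is the topology of pointwise convergence on $\Lip_0(M)=\mathcal F(M)^*$. So fix a net $(\delta(x_\alpha))_\alpha$ in $\delta(M)$ with $\delta(x_\alpha)\to\mu$ weakly; since weakly convergent nets are norm bounded, $(x_\alpha)$ is a bounded net in $M$ (hence, replacing $M$ by a closed ball around $0$, which sits isometrically and weak-to-weak homeomorphically inside $\mathcal F(M)$ as a weakly closed subspace, I may even assume $M$ bounded). For each $z\in M$, testing $\mu$ against the $1$-Lipschitz function $t\mapsto d(t,z)-d(0,z)\in\Lip_0(M)$ shows that $d(x_\alpha,z)$ converges; write $g(z)$ for its limit. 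Then $g\colon M\to[0,\infty)$ is $1$-Lipschitz, being a pointwise limit of uniformly $1$-Lipschitz functions, and weak lower semicontinuity of the norm gives $\|\mu-\delta(z)\|\le\liminf_\alpha\|\delta(x_\alpha)-\delta(z)\|=g(z)$ for every $z\in M$.

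First I dispose of the case $\inf_{z\in M}g(z)=0$: choosing $z_n$ with $g(z_n)\to 0$, we get $\|\mu-\delta(z_n)\|\to 0$, so $(\delta(z_n))$ is norm-Cauchy, hence $(z_n)$ is Cauchy in $M$ (as $\delta$ is an isometry), hence convergent to some $x\in M$ by completeness, and then $\mu=\lim_n\delta(z_n)=\delta(x)\in\delta(M)$. So it remains to show that $c:=\inf_z g(z)>0$ is impossible.

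So suppose $c>0$. For each $w\in M$ we have $d(x_\alpha,w)\to g(w)\ge c$, so the net eventually leaves $B(w,c/2)$, and hence, taking an upper bound over finitely many indices, for every finite $F\subseteq M$ the net eventually leaves $\bigcup_{w\in F}B(w,c/2)$. A greedy construction then yields indices $\gamma_1,\gamma_2,\dots$ for which $(x_{\gamma_n})_n$ is a $(c/2)$-separated sequence drawn from the range of the net (at the $n$-th step, choose $\gamma_{n+1}$ past a threshold beyond which the net avoids $\bigcup_{i\le n}B(x_{\gamma_i},c/2)$). Setting $\varphi(x_{\gamma_n}):=(-1)^n c/8$ defines a $\tfrac12$-Lipschitz function on the $(c/2)$-separated set $\{x_{\gamma_n}:n\}$; a McShane extension, minus its value at $0$, produces $h\in\Lip_0(M)$ for which $\langle h,\delta(x_{\gamma_n})\rangle=(-1)^n c/8-h(0)$ does not converge. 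Since $\langle h,\delta(x_\alpha)\rangle$ converges to $\langle\mu,h\rangle$ along the net, this is a contradiction — \emph{provided} the $\gamma_n$ can be chosen cofinal in the index set, so that $(\delta(x_{\gamma_n}))_n$ is a genuine subnet. Securing this cofinality is the one point that needs care: one arranges it by first reducing to a net whose index set has countable cofinality (for instance, passing to a separable closed $M_0\ni 0$ with $\mu\in\mathcal F(M_0)$ and checking that $\mu$ remains in the weak closure of $\delta(M_0)$ inside $\mathcal F(M_0)$), after which one simply forces $\gamma_n$ to dominate the $n$-th term of a fixed cofinal sequence during the greedy step. Once $(\gamma_n)$ is cofinal the oscillation contradicts $\langle h,\delta(x_\alpha)\rangle\to\langle\mu,h\rangle$; hence $c>0$ cannot occur and $\mu\in\delta(M)$.

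The routine ingredients are the bookkeeping with $g$ and the elementary McShane/Lipschitz-constant estimates. The genuine obstacle is precisely the net-versus-sequence issue in the last case: an increasing but non-cofinal subsequence of net indices need not inherit the net's limit, so one must either reduce beforehand to a countably-cofinal index set or extract directly a separated \emph{cofinal} subnet before invoking the oscillating Lipschitz function. Everything else proceeds as above.
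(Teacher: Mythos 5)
Your dichotomy on $c=\inf_z g(z)$ is a sound skeleton, and the case $c=0$ is correct and is in substance the paper's Lemma~\ref{l:WeakStarKey} (completeness plus weak lower semicontinuity of the norm). (A side remark: weakly convergent \emph{nets} need not be norm bounded, so your opening reduction to bounded $M$ is based on a false statement; luckily you never use it, since $g(z)=\lim_\alpha d(x_\alpha,z)$ is finite automatically.) The genuine gap is exactly the point you flag and do not close: in the case $c>0$ your contradiction needs the greedy indices $(\gamma_n)$ to be \emph{cofinal}, and in a directed set of uncountable cofinality (e.g.\ $\omega_1$, or finite subsets of an uncountable set ordered by inclusion) no increasing sequence is cofinal, so "forcing $\gamma_n$ to dominate the $n$-th term of a fixed cofinal sequence" is impossible. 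The proposed repair -- passing to a separable closed $M_0\ni 0$ with $\mu\in\mathcal F(M_0)$ and "checking that $\mu$ remains in the weak closure of $\delta(M_0)$" -- is not available: by McShane extension the weak topology of $\mathcal F(M_0)$ is the restriction of that of $\mathcal F(M)$, so the claim amounts to $\mu\in\overline{\delta(M_0)}^{w}$, which does not follow from $\mu\in\overline{\delta(M)}^{w}$; the approximating net has no reason to take values in (or even near) $M_0$ -- indeed in the scenario $c>0$ it eventually leaves every ball around every point of $M_0$ -- and the weak topology of a nonseparable free space need not have countable tightness, so one cannot replace the net by a countably indexed one. As written, the case $c>0$ is therefore not refuted.

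The case $c>0$ can be closed without any subnet extraction by using that $\mu$ lies in $\mathcal F(M)$, i.e.\ is norm-approximable by finitely supported elements: pick $\nu=\sum_{i=1}^n a_i\delta(p_i)$ with $\Vert\mu-\nu\Vert<c/8$ and test against the single function $h(t)=\min\{\dist(t,\{0,p_1,\dots,p_n\}),c/2\}$. Then $h\in\Lip(M)$ with $\Vert h\Vert_L\le 1$ and $\langle h,\nu\rangle=0$, so $\vert\langle h,\mu\rangle\vert<c/8$; on the other hand $d(x_\alpha,z)\to g(z)\ge c$ for each of the finitely many $z\in\{0,p_1,\dots,p_n\}$, so eventually $h(x_\alpha)=c/2$, whence $\langle h,\mu\rangle=\lim_\alpha h(x_\alpha)=c/2$, a contradiction. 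This is essentially what the paper does: it works with the weak$^*$ closure of $\delta(M)$ in $\Lip(M)^*$ and shows, using the test functions $\dist(\cdot,\{x_1,\dots,x_n\})$, that a point of $\overline{\delta(M)}^{w^*}\setminus\delta(M)$ fails to be weak$^*$-continuous and hence is not in $\mathcal F(M)$. Your $c>0$ situation is precisely that configuration, and the distance-to-a-finite-set functional is the missing ingredient; the oscillating function on a separated "cofinal" subsequence is the step that fails.
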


The proposition could be deduced more or less easily from Proposition~2.1.6 in \cite{weaver} but we propose a self-contained proof.
For the proof we will need the next observation (essentially already present in \cite{weaver}).
The weak* closures of subsets of $\Free(M)$ below are taken in the bidual $\Free(M)^{**}=\Lip(M)^*$.

\begin{lemma}\label{l:WeakStarKey}
Let $M$ be a complete metric space.
Let $\mu \in \overline{\delta(M)}^{w^*}\setminus \delta(M)$.
Then there exists $\varepsilon>0$ such that for all $q_1,\ldots,q_n \in M$ we have that 
\[
\mu \in \overline{\delta\left(M \setminus \bigcup_{i=}^n B(q_i,\varepsilon)\right)}^{w^*}.
\]
\end{lemma}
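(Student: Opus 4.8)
The plan is to argue by contradiction, exploiting the Cauchy-ness that fails at $\mu$. Suppose the conclusion is false: then there exist points $q_1,\dots,q_n\in M$ such that $\mu\notin\overline{\delta(M\setminus\bigcup_{i=1}^n B(q_i,\varepsilon))}^{w^*}$ for every $\varepsilon>0$ — but the negation must be handled carefully, since $\varepsilon$ is existentially quantified in the statement. So in fact I assume: for every $\varepsilon>0$ there are $q_1^\varepsilon,\dots,q_{n_\varepsilon}^\varepsilon$ with $\mu\notin\overline{\delta(M\setminus\bigcup_i B(q_i^\varepsilon,\varepsilon))}^{w^*}$. The first step is to observe that $\mu\in\overline{\delta(M)}^{w^*}$ can be split: since $\delta(M)=\bigcup_i \delta(B(q_i^\varepsilon,\varepsilon))\cup\delta(M\setminus\bigcup_i B(q_i^\varepsilon,\varepsilon))$, weak* compactness of the closed ball together with the fact that a weak* closure of a finite union is the union of the weak* closures gives that $\mu$ lies in $\overline{\delta(B(q_i^\varepsilon,\varepsilon))}^{w^*}$ for some index $i=i(\varepsilon)$. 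Replacing the $q$'s, we get: for every $\varepsilon>0$ there is $p_\varepsilon\in M$ with $\mu\in\overline{\delta(B(p_\varepsilon,\varepsilon))}^{w^*}$.

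The second step is to extract a Cauchy sequence. Taking $\varepsilon=2^{-k}$ and $p_k:=p_{2^{-k}}$, I claim $\{p_k\}$ is Cauchy. Indeed, $\mu\in\overline{\delta(B(p_k,2^{-k}))}^{w^*}\cap\overline{\delta(B(p_{k+1},2^{-k-1}))}^{w^*}$. To turn "$\mu$ in both weak* closures" into "the two balls are close", the key is to test against the Lipschitz function $f_k(t)=d(t,p_k)-d(0,p_k)$, which has $\|f_k\|_L\le 1$: its canonical extension to $\Lip(M)^*$ is weak* continuous, so $\langle f_k,\mu\rangle$ is approximated by $f_k(t)=d(t,p_k)-d(0,p_k)$ for $t$ ranging in $B(p_k,2^{-k})$, forcing $|\langle f_k,\mu\rangle+d(0,p_k)|\le 2^{-k}$, i.e. $\mu$ "sees" $p_k$ within $2^{-k}$. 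Comparing the estimates coming from $p_k$ and $p_{k+1}$ via the triangle inequality (using, say, the function $d(\cdot,p_k)$ evaluated near $p_{k+1}$ and vice versa) yields $d(p_k,p_{k+1})\le C\cdot 2^{-k}$, so $\{p_k\}$ is Cauchy. By completeness of $M$ it converges to some $p\in M$.

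The third step is to identify $\mu=\delta(p)$, contradicting $\mu\notin\delta(M)$. Since $\delta(B(p_k,2^{-k}))$ eventually lies in any prescribed ball around $\delta(p)$ in $\Free(M)$ — because $\|\delta(q)-\delta(p)\|=d(q,p)\le d(q,p_k)+d(p_k,p)\to 0$ uniformly over $q\in B(p_k,2^{-k})$ — the norm-closed balls $\overline{B}(\delta(p),r_k)$ with $r_k\to 0$ each contain $\delta(B(p_k,2^{-k}))$ for $k$ large, hence contain its weak* closure (a norm-closed ball in $\Free(M)$ is weak* closed in $\Free(M)^{**}$, being convex and norm-closed, so weakly closed, and bounded convex sets... actually one uses that a norm ball of $\Free(M)$, viewed inside $\Free(M)^{**}$, is $w^*$-dense in the corresponding ball of $\Free(M)^{**}$, so its $w^*$-closure is the bidual ball of that radius centered at $\delta(p)$). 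In any case $\mu\in\bigcap_k \overline{B}_{\Free(M)^{**}}(\delta(p),r_k)=\{\delta(p)\}$, so $\mu=\delta(p)\in\delta(M)$, a contradiction.

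The main obstacle I anticipate is the bookkeeping in the second step: one must be careful that "$\mu$ belongs to the weak* closure of $\delta$ of a small ball" genuinely pins down the center up to the radius, and that the two-sided comparison between consecutive centers is done with legitimate norm-one Lipschitz functions (the distance functions $d(\cdot,p)$, recentered at the basepoint). A subtlety is that weak* convergence only controls finitely many functionals at a time, so the Cauchy estimate must be extracted functional-by-functional; choosing the single test function $d(\cdot,p_k)$ at each stage is what makes this clean. Everything else is routine: the finite-union decomposition of weak* closures and the final intersection-of-shrinking-balls argument are standard.
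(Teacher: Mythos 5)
Your proof is correct and takes essentially the same route as the paper's: negate the statement, use that weak* closures commute with finite unions to place $\mu$ in $\overline{\delta(B(p_k,2^{-k}))}^{w^*}$ for a sequence of centers $p_k$, show $\{p_k\}$ is Cauchy, and invoke completeness to conclude $\mu=\delta(p)\in\delta(M)$, a contradiction. The only difference is cosmetic: the paper gets $\norm{\mu-\delta(p_k)}\leq 2^{-k}$ at once from weak*-closedness of bidual balls (making both the Cauchy estimate and the identification $\mu=\lim_k\delta(p_k)$ immediate), whereas you re-derive the same information by testing against the distance functions $d(\cdot,p_k)-d(0,p_k)$ and then finishing with a shrinking-balls intersection.
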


\begin{proof}
Indeed, otherwise we could find a sequence $(q_n) \subset M$ such that $\mu \in \overline{\delta(B(q_n,2^{-n}))}^{w^*}$ for every $n\geq 1$.
It follows that $\norm{\mu-\delta(q_n)} \leq 2^{-n}$ for every $n$ and thus $\{q_n\}$ is Cauchy. 
By completeness of $M$ it follows that $\mu= \lim_n \delta(q_n) \in \delta(M)$. 
This contradiction proves the claim.
\end{proof}

\begin{proof}[Proof of Proposition~\ref{p:DeltaMweaklyClosed}]
It is enough to show that if $\mu \in \overline{\delta(M)}^{w^*}\setminus \delta(M)$, then $\mu$ is not $w^*$-continuous.
Indeed, this yields that $\mu \notin \Free(M)$ and so $\overline{\delta(M)}^w=\overline{\delta(M)}^{w*}\cap \Free(M)=\delta(M)$.

So let $\mu \in \overline{\delta(M)}^{w*}\setminus \delta(M)$ and let $\varepsilon>0$ be as in Lemma~\ref{l:WeakStarKey}.
Now let $U$ be an open neighborhood of $0$ in $(\closedball{\Lip(M)},w^*)$. 
Since the $w^*$ topology and the topology of pointwise convergence coincide on the ball $\closedball{\Lip(M)}$, we may assume that there are $x_1,\ldots,x_n \in M$ and $\alpha>0$ such that $U=\set{f \in \closedball{\Lip(M)}: \abs{f(x_i)}<\alpha \mbox{ for } i=1,\ldots n}$.
We define $f(x):=\dist(x,\set{x_1,\ldots,x_n})$.
We clearly have $f \in U$.
Moreover since 
\[
\mu \in \overline{\delta\left(M \setminus \bigcup_{i=1}^n B(x_i,\varepsilon)\right)}^{w^*},
\]
we have that $\mu(f) \geq \varepsilon$.
Thus $\mu$ is not weak*-continuous as $V$ was arbitrary.
\end{proof}

We observe the following curious corollary (which also admits an independent proof by combinatorial methods).
\begin{corollary}\label{cor:weaktonormcv}
Let $M$ be a complete metric space. 
If $\{x_n\} \subset M$ is a sequence such that $\delta(x_n)$ converges weakly to some $\mu \in \Free(M)$, then there exists $x \in M$ such that $\mu=\delta(x)$ and $d(x_n,x) \to 0$.
\end{corollary}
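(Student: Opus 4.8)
The plan is to derive this as a direct consequence of Proposition~\ref{p:DeltaMweaklyClosed} together with Lemma~\ref{lemma:wconvmolecules} (or rather the technique behind it). First I would observe that by Proposition~\ref{p:DeltaMweaklyClosed}, since $\delta(M)$ is weakly closed and $\delta(x_n) \to \mu$ weakly, the limit $\mu$ must already lie in $\delta(M)$; write $\mu = \delta(x)$ for some $x \in M$. So the only real content is the norm convergence $d(x_n,x) \to 0$.

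For the norm convergence, I would argue by contradiction in the same spirit as the proof of Lemma~\ref{lemma:wconvmolecules}. Suppose $\varepsilon \in (0, \limsup_n d(x_n,x))$, and pass to a subsequence with $d(x_n,x) \geq \varepsilon$ for all $n$. Consider the Lipschitz function $f(t) = \min\{d(x,t),\varepsilon\}$ and set $g = f - f(0) \in \Lip(M)$, which has $\|g\|_L \le 1$. Then $\langle g, \delta(x_n)\rangle = f(x_n) = \varepsilon$ for every $n$ in the subsequence, while $\langle g, \delta(x)\rangle = f(x) = 0$. This contradicts weak convergence $\delta(x_n) \to \delta(x)$, since evaluation against $g$ is a weakly continuous functional. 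Hence $\limsup_n d(x_n,x) = 0$, i.e. $d(x_n,x) \to 0$.

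There is essentially no obstacle here; the statement is a packaging of the two preceding results. The only point worth a moment's care is making sure the truncated distance function is genuinely $1$-Lipschitz (it is, being a minimum of two $1$-Lipschitz functions) and that subtracting $f(0)$ does not affect the evaluations at the relevant points after accounting for the basepoint correction — concretely $\langle g,\delta(x_n)\rangle - \langle g,\delta(x)\rangle = f(x_n) - f(x)$, which is what we use. I would keep the write-up to two or three sentences, citing Proposition~\ref{p:DeltaMweaklyClosed} for membership in $\delta(M)$ and reproducing the short truncation argument for the metric convergence.
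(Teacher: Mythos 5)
Your proposal is correct and follows essentially the paper's route: membership $\mu\in\delta(M)$ via Proposition~\ref{p:DeltaMweaklyClosed}, then testing weak convergence against a distance-based Lipschitz function. The paper is only slightly more direct in the second step, evaluating against $f(\cdot)=d(\cdot,x)-d(0,x)$ to get $d(x_n,x)=\duality{\delta(x_n),f}-\duality{\delta(x),f}\to 0$ outright, whereas you use a truncated distance and a contradiction; both are fine, and your remark about the basepoint correction is exactly the right point of care.
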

\begin{proof}
The fact that $\mu=\delta(x)$ follows from Proposition~\ref{p:DeltaMweaklyClosed}.
For the rest it is enough to pose $f(\cdot):=d(\cdot,x)-d(0,x)$ and use that $d(x_n,x)=\duality{\delta(x_n),f}-\duality{\delta(x),f} \to 0$.
\end{proof}

Given a complete metric space $M$ and $\mu \in \Free(M)\setminus \delta(M)$ there is a weak neighborhood that separates $\mu$ from $\delta(M)$. 
The next example shows that contrary to what one might expect, such a neighborhood is not necessarily of the form $\set{\gamma \in \Free(M): \abs{\duality{f,\gamma-\mu}}<\varepsilon}$ for some $f \in \Lip(M)$ and $\varepsilon>0$.
\begin{example}
Let $M=[0,1]$ with the usual metric and let $\mu$ be the Lebesgue measure on $[0,1]$.
It is well known and can be shown easily using the Riemann sums that $\mu \in \Free(M)$.
It acts on $\Lip([0,1])$ as follows $\duality{\mu,f}=\int_0^1 f(t)dt$.
Now the mean value theorem implies that for every $f \in \Lip(M)$ there exists $x \in [0,1]$ such that 
$\duality{\delta(x),f}=\duality{\mu,f}$.
\end{example}

In light of Proposition \ref{p:DeltaMweaklyClosed}, it is also natural to wonder if the set $V$ of molecules is weakly closed. It is known that $0$ is in the weak-closure of $V$ whenever $M$ is not bi-Lipschitz embeddable in $\mathbb R^N$ (see Lemma 4.2 in \cite{gr}). The following proposition shows that $0$ is the only point that we can reach taking the weak-closure of $V$.

\begin{proposition}\label{prop:Vweaklyclosed}
Let $(M,d)$ be a complete metric space. Then $\overline{V}^{w} \subset V \cup \{ 0 \}$.
\end{proposition}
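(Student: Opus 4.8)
The plan is to fix $\mu\in\overline{V}^{w}$ with $\mu\neq 0$, choose a net $(m_{x_\alpha y_\alpha})_\alpha$ in $V$ with $m_{x_\alpha y_\alpha}\stackrel{w}{\to}\mu$, and prove $\mu\in V$; note $\norm{\mu}\le 1$ by weak lower semicontinuity of the norm. I would first check that $\sup_\alpha d(x_\alpha,y_\alpha)<\infty$. Indeed, if some subnet had $d(x_\alpha,y_\alpha)\to\infty$, then for every bounded $f\in\Lip(M)$ one would get $\duality{f,m_{x_\alpha y_\alpha}}=\frac{f(x_\alpha)-f(y_\alpha)}{d(x_\alpha,y_\alpha)}\to 0$; since the bounded functions form a $w^*$-dense subspace of $\Lip(M)=\F(M)^*$ (truncating $f$ at heights $\pm n$ does not increase $\norm{\cdot}_L$ and converges pointwise, hence $w^*$, to $f$) and $\mu$ is $w^*$-continuous, this would force $\mu=0$, a contradiction. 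So, after passing to a subnet, $d(x_\alpha,y_\alpha)\to r\ge 0$ and $\delta(x_\alpha)-\delta(y_\alpha)=d(x_\alpha,y_\alpha)\,m_{x_\alpha y_\alpha}\stackrel{w}{\to}r\mu$.

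The next observation is that $(x_\alpha)$ has a cluster point in $M$ if and only if $(y_\alpha)$ does: if $y_{\alpha}\to y$ along a subnet then $\delta(y_\alpha)\to\delta(y)$ in norm, so $\delta(x_\alpha)\stackrel{w}{\to}\delta(y)+r\mu$, and Proposition~\ref{p:DeltaMweaklyClosed} gives $\delta(y)+r\mu\in\overline{\delta(M)}^{w}=\delta(M)$, say $\delta(y)+r\mu=\delta(x)$, whence $x_\alpha\to x$ in $M$ by Corollary~\ref{cor:weaktonormcv} (whose proof applies verbatim to nets); the converse is symmetric. Suppose first that both converge, $x_\alpha\to x$ and $y_\alpha\to y$. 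Then $r=d(x,y)$; if $x\neq y$ then $m_{x_\alpha y_\alpha}\to m_{xy}$ in norm by continuity of $(u,v)\mapsto m_{uv}$ off the diagonal, so $\mu=m_{xy}\in V$ and we are done. If $x=y$ (hence $r=0$), I would show $\mu=0$ as follows: for $f\in\Lip(M)$ with $\norm{f}_L\le 1$ and $\rho>0$, let $f_\rho$ be the McShane $1$-Lipschitz extension to $M$ of $f\restricted B(x,\rho)$, renormalised so that $f_\rho(0)=0$; for large $\alpha$ we have $x_\alpha,y_\alpha\in B(x,\rho)$, so $f_\rho(x_\alpha)-f_\rho(y_\alpha)=f(x_\alpha)-f(y_\alpha)$ and therefore $\duality{f_\rho,\mu}=\duality{f,\mu}$; a short computation shows $f_\rho\to h_x$ pointwise as $\rho\to 0$, where $h_x(t)=d(t,x)-d(0,x)$, so (the Lipschitz constants being bounded) $f_\rho\stackrel{w^*}{\to}h_x$ and $\duality{f,\mu}=\duality{h_x,\mu}$ for every such $f$; taking $f=\pm h_x$ yields $\duality{h_x,\mu}=0$, hence $\duality{f,\mu}=0$ for all $f$, i.e. $\mu=0$.

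This leaves the case where neither $(x_\alpha)$ nor $(y_\alpha)$ has a cluster point in $M$, and there too the goal is $\mu=0$. The strategy I would follow mimics the proof of Proposition~\ref{p:DeltaMweaklyClosed}: write $\mu=\lim_k\nu_k$ in norm with $\nu_k\in\operatorname{span}(V)$ and let $F_k$ be the (finite) set of endpoints of the molecules occurring in $\nu_k$; then $g_k:=\dist(\cdot,F_k)-\dist(0,F_k)\in\Lip(M)$ satisfies $\duality{g_k,\nu_k}=0$, so $\abs{\duality{g_k,\mu}}=\abs{\duality{g_k,\mu-\nu_k}}\le\norm{\mu-\nu_k}\to 0$, while $\duality{g_k,m_{x_\alpha y_\alpha}}=\frac{\dist(x_\alpha,F_k)-\dist(y_\alpha,F_k)}{d(x_\alpha,y_\alpha)}$. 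Combining this with Lemma~\ref{l:WeakStarKey} — which, exactly as in that proof, should extract from the absence of cluster points a uniform $\varepsilon>0$ keeping the $w^*$-limits of $\delta(x_\alpha)$ and $\delta(y_\alpha)$ at $\dist(\cdot,F)$-distance $\ge\varepsilon$ from every finite $F$ — and with Lemma~\ref{lemma:wconvmolecules} — which rules out $\mu$ being a molecule, since that would force the endpoints to converge in $M$ — should yield $\mu=0$. Turning these ingredients into an actual proof, namely establishing that a net of molecules whose two endpoints both escape to infinity in $M$ cannot converge weakly to a non-zero element of $\F(M)$, is the step I expect to be the main obstacle; it is precisely here that completeness of $M$ and the weak-closedness of $\delta(M)$ must be used in an essential way.
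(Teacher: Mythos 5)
Your argument is correct and complete in the cases it actually settles: the reduction to $\limsup_\alpha d(x_\alpha,y_\alpha)<\infty$ via truncations, the dichotomy between "some endpoint subnet converges in $M$" and "no endpoint subnet converges" (using Proposition~\ref{p:DeltaMweaklyClosed} and the net version of Corollary~\ref{cor:weaktonormcv}), and the McShane-extension argument forcing $\mu=0$ when $x=y$ are all sound, and they are genuinely different in flavour from the paper's proof, which instead embeds $(V,w^*)$ into $\mathcal M(\beta\tilde M)$ via $\Phi^*$ and invokes Weaver's Theorem~2.5.3 on normal functionals. But the last case — both nets $\{x_\alpha\}$, $\{y_\alpha\}$ without cluster points in $M$ — is left as an acknowledged "main obstacle," and this is not a routine loose end: it is precisely the heart of the proposition, the situation where the paper needs Weaver's nontrivial result that $\Phi^*\delta(\xi)$ is non-normal (hence not $w^*$-continuous, hence not in $\Free(M)$) for every $\xi\in\beta\tilde M\setminus\tilde M$ with $\Phi^*\delta(\xi)\neq 0$. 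So as it stands the proposal proves a strictly weaker statement than the proposition.

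Moreover, the ingredients you sketch for that case do not point toward a proof. The functions $g_k=\dist(\cdot,F_k)-\dist(0,F_k)$ satisfy $\duality{g_k,\nu_k}=0$ and hence $\abs{\duality{g_k,\mu}}\to 0$ automatically; this yields information consistent with any $\mu$, not a contradiction, because to rule out $\mu\neq 0$ you would need a sequence of norm-one functions that are small in some topology where $\mu$ must be continuous yet are uniformly bounded away from zero against $\mu$ — the analogue of the proof of Proposition~\ref{p:DeltaMweaklyClosed}. Lemma~\ref{l:WeakStarKey} does not supply this either: it concerns $w^*$-cluster points of $\delta(M)$ itself, whereas here the relevant limits are of the normalized differences $\frac{\delta(x_\alpha)-\delta(y_\alpha)}{d(x_\alpha,y_\alpha)}$, and when both endpoints escape to infinity (possibly with $d(x_\alpha,y_\alpha)$ bounded away from $0$) there is no evident choice of test functions, pointwise-small on finite sets, whose action on $\mu$ stays large; constructing such functions (or a decreasing net witnessing failure of normality) is exactly the content of Weaver's theorem that the paper outsources. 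To complete your route you would essentially have to reprove that result for this setting, so the missing case is a genuine gap, not a technicality.
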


\begin{proof}
The proof is based on \cite[Theorem 2.5.3]{weaver}. Let us begin with an explanation of this result. To this end, we need to introduce the same kind of map as the one used in the proof of \ref{prop:dualityKalton}. Let $\tilde{M}:=\{(x,y)\in M^2 \, : \, x \neq y \}$ and 
\[
\begin{split}
\Phi: \Lip(M) &\to \mathcal{C}_{b}(\tilde{M})\\
 f(x,y)&\mapsto \frac{f(x) - f(y)}{d(x,y)}
\end{split}
\] (here $\mathcal{C}_{b}(\tilde{M})$ stands for the continuous and bounded functions on $\tilde{M}$). 
It is easy to see that $\Phi$ is an isometry. Now let us denote $\beta \tilde{M}$ the Stone-\v{C}ech compactification of $\tilde{M}$. As usual, we can canonically identify $\mathcal{C}_{b}(\tilde{M})$ with $\mathcal{C}(\beta \tilde{M})$ so that we now see $\Phi$ as a map from $\Lip(M)$ to $\mathcal{C}(\beta \tilde{M})$. Thus $\Phi^*$ goes from $\mathcal{C}(\beta \tilde{M})^* = \mathcal{M}(\beta  \tilde{M})$ to $\Lip(M)^*$. According to Weaver, we say that $\mu \in \Lip(M)^*$ is normal if $\<\mu , f_i \>$ converge to $\<\mu , f \>$ whenever $\{f_i\}$ is a bounded and decreasing  (meaning that $f_i \geq f_j$ for $i\leq j$) net in $\Lip(M)$ which $w^*$-converges to $f \in \Lip(M)$. Clearly normality is implied by $w^*$-continuity. Finally, \cite[Theorem 2.5.3]{weaver} asserts that if $x \in \beta \tilde{M}$ with $\Phi^* \delta(x) \neq 0$, then $\Phi^* \delta(x)$ is normal if and only if $x \in \tilde{M}$.

Let us now prove the assertion of the proposition. 
Since \[\overline{V}^{w}= \overline{V}^{w^*} \cap \F(M) = \{\mu \in \overline{V}^{w^*} \, : \, \mu \mbox{ is } w^* \mbox{-continuous} \},\] 
it is enough to show that if $\mu \in \overline{V}^{w^*} \backslash (V \cup \{0\})$ then $\mu$ is not $w^*$-continuous. 
So let us fix such a $\mu$.  
We identify, as we may,  $\tilde{M}$ with $\delta(\tilde{M}) \subset  \mathcal{M}(\beta  \tilde{M})$. We claim that $\delta(\tilde{M})$ is homeomorphic to $(V,w^*)$. Indeed, it is clear that 
\[
\Phi^*\restricted_{\delta(\tilde{M})}: \delta(x,y) \in \delta(\tilde{M}) \mapsto  m_{xy} \in (V,w^*)
\]  
is continuous and bijective. 
The fact that the inverse mapping is also continuous follows from Lemma \ref{lemma:wconvmolecules}. So the claim is proved. 
Now $(\overline{V}^{w^*},w^*)$ is clearly a compactification of $V$. 
Thus the universal property of the Stone-\v{C}ech compactification provides a surjective extension of $\Phi^*\restricted_{\delta(\tilde{M})}$ that goes from $\delta(\beta\tilde{M})$ to $\overline{V}^{w^*}$. 
It is easy to check that the latter extension is in fact $\Phi^*\restricted_{\delta(\beta\tilde{M})}:  \delta(\beta\tilde{M}) \to \overline{V}^{w^*}$.

Now consider $x \in \beta \tilde{M}$ such that $\Phi^* \delta(x) = \mu \neq 0$. Since $\mu \in \overline{V}^{w^*} \backslash (V \cup \{0\})$, we deduce that $x \in \tilde{M}$. Thus, according to \cite[Theorem 2.5.3]{weaver}, $\Phi^* \delta(x)= \mu$ is not normal and therefore not $w^*$-continuous. This ends the proof.
\end{proof}

From the previous proposition, we deduce a result similar to Corollary \ref{cor:weaktonormcv}.

\begin{corollary}
Let $M$ be a complete metric space. 
If $\sequence{m_{x_ny_n}} \subset \F(M)$ is a sequence of molecules which converges weakly to some $\mu \in \Free(M)$, then there exist $x \neq y \in M$ such that $\mu=m_{xy}$ and $\sequence{m_{x_ny_n}}$ actually converges in norm to $m_{xy}$.
\end{corollary}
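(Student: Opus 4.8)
The plan is to deduce this corollary from Proposition~\ref{prop:Vweaklyclosed} in essentially the same way that Corollary~\ref{cor:weaktonormcv} is deduced from Proposition~\ref{p:DeltaMweaklyClosed}. Suppose $m_{x_ny_n} \to \mu$ weakly. First I would rule out $\mu=0$: the molecules all lie on $S_{\F(M)}$, so if $0$ belonged to the weak sequential closure of $\{m_{x_ny_n}\}$ we could reach a contradiction — for instance, testing against a suitable Lipschitz function, or invoking that a weakly convergent sequence is bounded together with a separation argument showing $0$ cannot be a weak limit of norm-one elements that stay in the (weakly closed, by Proposition~\ref{prop:Vweaklyclosed}) set $V$ unless it were the constant sequence, which it is not. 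More cleanly: by Proposition~\ref{prop:Vweaklyclosed}, $\mu \in \overline{V}^{w} \subset V \cup \{0\}$, so either $\mu = 0$ or $\mu = m_{xy}$ for some $x \neq y$; and $\mu=0$ is impossible because $\mu$ is a weak limit of a sequence from $V \subset S_{\F(M)}$ while $0$ is weakly separated from $V$ by, e.g., any norm-one functional nearly attaining its norm — so I should check the details here but it is routine.

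Once we know $\mu = m_{xy}$ with $x \neq y \in M$, the sequence $\{m_{x_ny_n}\}$ is a net in $V$ converging weakly to the molecule $m_{xy}$, so Lemma~\ref{lemma:wconvmolecules} applies directly and gives $d(x_n,x) \to 0$ and $d(y_n,y) \to 0$. From this, norm convergence follows by the elementary estimate already carried out in the proof of (ii)$\Rightarrow$(i) of Theorem~\ref{th:charpreserved}: writing
\[
\norm{m_{xy}-m_{x_ny_n}} \leq \frac{\norm{(\delta(x)-\delta(y))-(\delta(x_n)-\delta(y_n))}}{d(x,y)} + \frac{|d(x_n,y_n)-d(x,y)|\,\norm{\delta(x_n)-\delta(y_n)}}{d(x,y)\,d(x_n,y_n)},
\]
and bounding the first term by $(d(x,x_n)+d(y,y_n))/d(x,y)$ and the second using $\norm{\delta(x_n)-\delta(y_n)} = d(x_n,y_n)$ together with $|d(x_n,y_n)-d(x,y)| \leq d(x,x_n)+d(y,y_n)$, both terms tend to $0$.

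I do not expect any genuine obstacle here; the corollary is a straightforward packaging of Proposition~\ref{prop:Vweaklyclosed} and Lemma~\ref{lemma:wconvmolecules}. The only point requiring a line of care is excluding $\mu=0$, and even that reduces to the observation that $V \subset S_{\F(M)}$ is weakly closed (hence $0 \notin \overline{V}^w$, since $0 \notin S_{\F(M)}$ and $V \cup \{0\}$ already forces the dichotomy) — alternatively one notes that if $m_{x_ny_n} \to 0$ weakly then for $f_n = f_{x_ny_n}$ from Lemma~\ref{lemma:IKWfunction} one would need $\duality{f_n, m_{x_ny_n}} = 1$ to be compatible with weak convergence to $0$, which it is not once one extracts along a subsequence realising the relevant limits. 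Either route is short.
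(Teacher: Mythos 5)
Your main line coincides with the paper's own proof, which is exactly the two observations you make: Proposition~\ref{prop:Vweaklyclosed} gives that the weak limit lies in $V\cup\{0\}$, and once one knows $\mu=m_{xy}$, Lemma~\ref{lemma:wconvmolecules} gives $d(x_n,x)\to0$, $d(y_n,y)\to0$, whence norm convergence by the elementary estimate you quote (the same computation as in the proof of (ii)$\Rightarrow$(i) of Theorem~\ref{th:charpreserved}). On that part there is nothing to object to.

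Where your write-up does not hold up is the exclusion of $\mu=0$, and none of the three justifications you sketch is valid. First, $V$ is \emph{not} weakly closed in general: the paper recalls, just before Proposition~\ref{prop:Vweaklyclosed}, that $0\in\overline{V}^{w}$ whenever $M$ does not bi-Lipschitz embed into any $\R^N$, so the proposition's conclusion $\overline{V}^{w}\subset V\cup\{0\}$ cannot be upgraded to $0\notin\overline{V}^{w}$ on the grounds that $0\notin S_{\F(M)}$; that reading is circular. Second, a weak limit of norm-one vectors can perfectly well be $0$ (any normalized weakly null sequence), so there is no separation argument of the kind you invoke. Third, the argument with $f_n=f_{x_ny_n}$ confuses weak convergence with something uniform over $B_{\Lip(M)}$: weak nullity only asserts $\duality{f,m_{x_ny_n}}\to0$ for each \emph{fixed} $f$, and having $\duality{f_n,m_{x_ny_n}}=1$ for varying $f_n$ is exactly what happens for any normalized weakly null sequence paired with its norming functionals, so no subsequence extraction produces a contradiction. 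Be aware that the paper's own two-line proof also deduces $\mu=m_{xy}$ directly from Proposition~\ref{prop:Vweaklyclosed} without commenting on the case $\mu=0$; your instinct that this point needs attention is therefore sound, but it is not ``routine'': since $0$ genuinely belongs to the weak closure of $V$ for many $M$, ruling out $\mu=0$ amounts to showing that no \emph{sequence} of molecules is weakly null, which requires an argument that neither your sketch nor the cited proposition supplies. As written, you should either prove that fact or weaken the conclusion to $\mu\in V\cup\{0\}$, with norm convergence when $\mu\neq0$.
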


\begin{proof}
The fact that $\mu=m_{xy}$ follows from Proposition~\ref{prop:Vweaklyclosed} and the fact that $\sequence{m_{x_ny_n}}$ actually converges in norm follows from Lemma \ref{lemma:wconvmolecules}.
\end{proof}

This last corollary should be compared with \cite[Theorem 5.2]{albiackalton}.

Now that we know that $\overline{V}^w\subset V \cup \set{0}$ we get an easy proof of Weaver's theorem~\cite{weaver} which claims that the preserved extreme points are molecules. 
We include it for completeness as it is directly related to the main subject of this paper.
\begin{corollary}
Let $M$ be a complete metric space and let $\mu$ be a preserved extreme point of $\closedball{\Free(M)}$. 
Then $\mu=m_{xy}$ for some $x\neq y \in M$.
\end{corollary}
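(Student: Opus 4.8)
The plan is to combine the two structural facts already established in this section, namely Weaver's theorem that preserved extreme points of $B_{\Free(M)}$ are molecules and the description $\overline{V}^w \subset V \cup \{0\}$ from Proposition~\ref{prop:Vweaklyclosed}. Actually, since the statement \emph{is} Weaver's theorem, the point is to give a short new proof from Proposition~\ref{prop:Vweaklyclosed}. Let $\mu$ be a preserved extreme point of $\closedball{\Free(M)}$; in particular $\mu \in S_{\Free(M)}$, so $\mu \neq 0$.

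First I would recall that $\closedball{\Free(M)} = \cconv(V)$, so by the Choquet-type argument (or simply by Milman's theorem) every extreme point of $\closedball{\Free(M)}$ lies in $\overline{V}^w$; a fortiori the preserved extreme point $\mu$ does. Here one uses that $\mu$ being a preserved extreme point of $\closedball{\Free(M)}$ implies it is an extreme point of $\closedball{\Free(M)}$, and Milman's theorem states that the extreme points of the closed convex hull of a set $A$ are contained in the weak closure $\overline{A}^w$ (applied with $A=V$). Then Proposition~\ref{prop:Vweaklyclosed} gives $\mu \in \overline{V}^w \subset V \cup \{0\}$, and since $\norm{\mu}=1 \neq 0$ we conclude $\mu \in V$, i.e. $\mu = m_{xy}$ for some $x \neq y \in M$.

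I expect the only subtlety to be making sure the hypotheses of Milman's theorem are met: it requires $\overline{V}^w$ to be compact, or more simply one applies the version stating $\ext(\overline{\conv}^w(A)) \subset \overline{A}^w$ which is valid in any locally convex space without compactness assumptions. Since $B_{\Free(M)}$ need not be weakly compact, I would cite the non-compact form of Milman's theorem (e.g. from \cite{Bourgin83}), which only needs that the closed convex hull of $A$ equals the given set. Alternatively, one can bypass Milman entirely: if $\mu$ is preserved extreme then by Proposition~\ref{p:CharaPreservedNets}(ii) the weak-slices of $\closedball{\Free(M)}$ at $\mu$ form a neighbourhood basis, and since each such slice meets $V$ (as $\closedball{\Free(M)}=\cconv(V)$ forces every nonempty slice to contain points of $V$), one obtains a net in $V$ converging weakly to $\mu$, whence $\mu \in \overline{V}^w \subset V \cup\{0\}$ and again $\mu \in V$ by $\norm{\mu}=1$. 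This second route is completely self-contained within the excerpt and is the one I would actually write down.
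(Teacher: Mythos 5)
Your second route---the one you say you would actually write down---is correct and takes a genuinely different path from the paper. You spend the ``preserved'' hypothesis through Proposition~\ref{p:CharaPreservedNets}(ii): the slices of $B_{\Free(M)}$ containing $\mu$ form a weak neighbourhood basis of $\mu$ in the ball, and since $B_{\Free(M)}=\cconv(V)$ every slice meets $V$ (the supremum of the defining functional over $V$ equals its supremum over the ball), so $\mu\in\overline{V}^{w}$; then Proposition~\ref{prop:Vweaklyclosed} together with $\norm{\mu}=1$ gives $\mu\in V$. The paper instead spends the hypothesis in the bidual: a preserved extreme point is by definition extreme in $B_{\Lip(M)^*}=\cconv^{w^*}(V)$, which \emph{is} $w^*$-compact, so the standard (compact) Milman theorem gives $\ext(B_{\Lip(M)^*})\subset\overline{V}^{w^*}$, and intersecting with $\Free(M)$ lands in $\overline{V}^{w}$, after which the conclusion is the same. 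Your slice argument is more self-contained (it is in the spirit of the proof of Theorem~\ref{prop:preserveddenting}); the paper's version is shorter once Milman is available and makes transparent exactly where $w^*$-compactness enters.

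Your first route, by contrast, is flawed as stated: there is no ``non-compact form of Milman's theorem'' asserting $\ext(\cconv(A))\subset\overline{A}^{w}$ in an arbitrary locally convex space; compactness of the relevant closed convex hull is essential, and $B_{\Free(M)}$ is not weakly compact here. A useful sanity check: that route uses only that $\mu$ is an extreme point of $B_{\Free(M)}$, so if it were valid it would prove that \emph{every} extreme point of $B_{\Free(M)}$ is a molecule for complete $M$, which is precisely the paper's open question a). The correct way to rescue the Milman idea is exactly the paper's: pass to the $w^*$-compact ball $B_{\Lip(M)^*}$, where the assumption that $\mu$ is \emph{preserved} extreme is what entitles you to apply the compact version.
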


\begin{proof}
Indeed, we have that $\clco (V)=B_{\Free(M)}$ and $\overline{B_{\Free(M)}}^{w^*}=B_{\Lip(M)^*}$. 
Thus $\clco^{w^*}(V)=B_{\Lip(M)^*}$ and so by Milman's theorem $\ext(B_{\Lip(M)^*}) \subset \overline{V}^{w^*}$.
Finally we get that $\Free(M)\cap \ext(B_{\Lip(M)^*}) \subset \overline{V}^{w}$ and so Proposition~\ref{prop:Vweaklyclosed} yields $\Free(M)\cap \ext(B_{\Lip(M)^*})\subset V$.
\end{proof}

\section{Duality of some Lipschitz free spaces}\label{s:Duality}
Many of our results in Sections~\ref{s:ExtStrucNatPred} and~\ref{section:unifdisc} use the hypothesis that $\Free(M)$ admits an isometric predual which makes $\delta(M)$ w$^*$-closed.
Even though for some of these results we do not know whether this hypothesis is superfluous, we take the opportunity to study the Lipschitz free spaces which admit such a predual.

\begin{definition} \label{def:natural}
Let $M$ be a bounded metric space. We will say that a Banach space $X$ is a \emph{natural predual} of $\Free(M)$ if $X^*=\Free(M)$ isometrically and $\delta(M)$ is $\sigma(\Free(M),X)$-closed.
\end{definition}

It is obvious that when $M$ is a compact metric space then every isometric predual of $\Free(M)$ is natural.
We will show in Examples~\ref{ex:AG} and~\ref{ex:NaturalNonNatural} that there are isometric preduals to $\Free(M)$ which are not natural.

Let us state for the future reference an almost obvious characterisation of natural preduals.
\begin{proposition}\label{prop:suffnatural} Let $M$ be a bounded metric space and let $X$ be an isometric predual of $\mathcal F(M)$.
Then the following are equivalent:
\begin{enumerate}[(i)]
\item There is a compact Hausdorff topology $\tau$ on $M$ such that $X\subset \Lip(M)\cap \mathcal C_\tau(M)$.
\item $\delta(M)$ is $\sigma(\Free(M),X)$-closed.
\end{enumerate}
\end{proposition}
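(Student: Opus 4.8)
The plan is to prove both implications directly from the definitions, using that on bounded subsets of $\Lip(M)$ the $w^*$-topology coincides with the topology of pointwise convergence on $M$.

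For (i)$\Rightarrow$(ii): Suppose $\tau$ is a compact Hausdorff topology on $M$ with $X \subset \Lip(M) \cap \mathcal C_\tau(M)$. I would show that the map $\delta \colon (M,\tau) \to (\Free(M), \sigma(\Free(M),X))$ is a homeomorphism onto its image; since $(M,\tau)$ is compact and the target is Hausdorff, this forces $\delta(M)$ to be $\sigma(\Free(M),X)$-compact, hence closed. Continuity of $\delta$ is immediate: a net $x_i \to x$ in $\tau$ satisfies $\duality{\delta(x_i),f} = f(x_i) \to f(x) = \duality{\delta(x),f}$ for every $f \in X$ because $f \in \mathcal C_\tau(M)$. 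For injectivity of $\delta$ one uses that $\delta$ is already injective on $M$ (it is an isometry for $d$); then a continuous bijection from a compact space to a Hausdorff space is automatically a homeomorphism, so no separate argument for continuity of the inverse is needed. I expect this direction to be essentially formal.

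For (ii)$\Rightarrow$(i): Assume $\delta(M)$ is $\sigma(\Free(M),X)$-closed. Since $M$ is bounded, $\delta(M)$ lies in a ball of $\Free(M)$, which is $\sigma(\Free(M),X)$-compact by the Banach--Alaoglu theorem (as $X$ is an isometric predual); being closed inside it, $\delta(M)$ is itself $\sigma(\Free(M),X)$-compact. Define $\tau$ to be the topology on $M$ obtained by pulling back $\sigma(\Free(M),X)$ via the injection $\delta$; equivalently, $\tau$ is the topology of pointwise convergence on elements of $X$. Then $(M,\tau)$ is compact because $\delta(M)$ is, and it is Hausdorff because $X$, being norming for $\Free(M)$, separates the points $\delta(x)$ for $x\in M$, hence separates points of $M$. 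By construction every $f \in X$ is $\tau$-continuous, so $X \subset \mathcal C_\tau(M)$; and $X \subset \Lip(M)$ by hypothesis. This gives (i).

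The only point requiring a little care is the passage ``closed inside a compact ball'' in (ii)$\Rightarrow$(i): one must note that the relevant ball of $\Free(M)$ is $\sigma(\Free(M),X)$-compact, which is exactly Banach--Alaoglu applied to $\Free(M) = X^*$, and that boundedness of $M$ guarantees $\delta(M)$ sits inside such a ball. Beyond that, both directions are soft topological arguments, so I do not anticipate a genuine obstacle; the statement is, as the authors say, ``almost obvious''.
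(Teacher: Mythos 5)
Your proof is correct and follows essentially the same route as the paper: for (i)$\Rightarrow$(ii) the paper likewise uses that $\delta\colon(M,\tau)\to(\Free(M),\sigma(\Free(M),X))$ is continuous, so that compactness of $\tau$ forces $\delta(M)$ to be $\sigma(\Free(M),X)$-compact, hence closed. The paper treats (ii)$\Rightarrow$(i) as immediate and does not write it out; your pullback construction of $\tau$ (with Banach--Alaoglu and the canonical identification $X\subset X^{**}=\Lip(M)$) is exactly the intended argument, which the authors in fact spell out later in the proof of Proposition~\ref{p:NaturalLittleLip}.
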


\begin{proof}
We only need to show (i)$\Rightarrow$(ii). To this end, note that the $w^*$-topology of $\F(M)$ and the $\tau$-topology coincide on $\delta(M)$. Indeed, every $w^*$-open set in $\delta(M)$ is also $\tau$-open since $X$ is made up of $\tau$-continuous functions, so that the $w^*$-topology is weaker than $\tau$ on $\delta(M)$. By compactness of the Hausdorff topology $\tau$, we have that they agree on $\delta(M)$.
\end{proof}

The natural preduals are quite common. In fact, the known constructions of isometric preduals to $\Free(M)$ when $M$ is bounded all produce natural preduals. 
Indeed, this is the case for Theorem~3.3.3 in~\cite{weaver} as well as Theorem~2.1 in~\cite{dal1} because of the compactness.
In the next theorem we will show that it is also true for Theorem~6.2 in~\cite{Kalton04}.
We will say that a subspace $X$ of $\Lip(M)$ \emph{$1$-separates points uniformly (shortened $1$-S.P.U.)} if 
for every $x,y \in M$ and every $\varepsilon>0$ there is $f \in X$ such that $f(x)-f(y)=d(x,y)$ and $\norm{f}_L<1+\varepsilon$. 

\begin{proposition}[Theorem~6.2 in~\cite{Kalton04}]\label{prop:dualityKalton} 
Let $M$ be a separable bounded pointed metric space and let $\tau$ be a topology on $M$ so that $(M, \tau)$ is compact. 
Assume that $X = \lip(M) \cap \mathcal{C}_{\tau}(M)$ 
$1$-S.P.U. 
Then $X$ is a natural predual of $\mathcal F(M)$.
\end{proposition}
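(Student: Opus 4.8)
The plan is to split the statement into two assertions: the isometric identification $X^{*}=\F(M)$, and the fact that $\delta(M)$ is $\sigma(\F(M),X)$-closed. The first is, in essence, Kalton's theorem; the second is the genuinely new point that justifies recording this result here.

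For the duality one can simply quote Theorem~6.2 of \cite{Kalton04}, whose hypotheses (separable bounded pointed $M$, a compact topology $\tau$ on $M$, and $X=\lip(M)\cap\mathcal{C}_{\tau}(M)$ being $1$-S.P.U.) are precisely those assumed here. If one prefers a self-contained treatment, I would argue as in \cite{Kalton04}: embed $X$ isometrically into $\mathcal{C}_{b}(\tilde{M})=\mathcal{C}(\beta\tilde{M})$ via $\Phi f(x,y)=(f(x)-f(y))/d(x,y)$, where $\tilde{M}=\{(x,y)\in M^{2}:x\neq y\}$, and study the adjoint $\Phi^{*}\colon\mathcal{M}(\beta\tilde{M})\to\Lip(M)^{*}$. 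The $1$-S.P.U. property, tested on molecules and combined with $B_{\F(M)}=\cconv(V)$, shows that $X$ is $1$-norming for $\F(M)$, so the canonical restriction $R\colon\F(M)\to X^{*}$ is isometric; surjectivity of $R$ — equivalently, $\F(M)$ being the dual of $X$ — then follows from a standard predual criterion, namely the compactness of $B_{\F(M)}$ in the topology $\sigma(\F(M),X)$, which is exactly where the compactness of $(M,\tau)$ enters (this can be organised along the lines of Weaver's analysis of normal functionals, \cite[Theorem 2.5.3]{weaver}). Pinning down the range of $\F(M)$ inside $X^{*}$ is the step I expect to be the main obstacle in a from-scratch argument; everything else is bookkeeping.

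Granting $X^{*}=\F(M)$, naturality is a short verification and I would just apply Proposition~\ref{prop:suffnatural}. By hypothesis $(M,\tau)$ is compact, and it is Hausdorff: given $x\neq y$, the $1$-S.P.U. property yields $f\in X\subset\mathcal{C}_{\tau}(M)$ with $f(x)-f(y)=d(x,y)\neq0$, and then (say $f(y)<c<f(x)$) the $\tau$-open sets $\{f<c\}$ and $\{f>c\}$ separate $x$ from $y$. Moreover $X=\lip(M)\cap\mathcal{C}_{\tau}(M)\subset\Lip(M)\cap\mathcal{C}_{\tau}(M)$, so condition (i) of Proposition~\ref{prop:suffnatural} is met; hence $\delta(M)$ is $\sigma(\F(M),X)$-closed. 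Together with $X^{*}=\F(M)$ this is precisely the statement that $X$ is a natural predual of $\F(M)$.
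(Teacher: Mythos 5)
The naturality half of your argument is fine and is exactly the paper's route: once $X^*=\F(M)$ is known, one applies Proposition~\ref{prop:suffnatural}, and your explicit check that $\tau$ is Hausdorff (via $1$-S.P.U. and $X\subset\mathcal C_\tau(M)$) is a sensible addition, since the statement only assumes compactness. The gap is in the duality half, $X^*=\F(M)$. You cannot "simply quote" Theorem~6.2 of \cite{Kalton04}: its hypotheses are not "precisely those assumed here", because Kalton's original exposition assumes the topology $\tau$ to be metrizable, and removing that assumption is precisely the point of the paper's re-proof (the paper also does not assume $\tau$-lower semicontinuity of $d$; it derives it from $1$-S.P.U. via Lemma~\ref{lemma:lsc}). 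So the citation does not cover the statement in the generality in which it is formulated.

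Your backup "from scratch" sketch does not close this gap either, for two concrete reasons. First, the claim that $1$-S.P.U. tested on molecules, together with $B_{\F(M)}=\cconv(V)$, makes $X$ $1$-norming is false as stated: a functional set can norm every molecule and still fail to norm convex combinations (take $M=\{0,a,b\}$ with $d(0,a)=d(0,b)=1$, $d(a,b)=2$, so $\F(M)=\ell_1^2$; the one-dimensional subspace of functions with $f(a)=-f(b)$ norms all molecules but gives $0$ on $\delta(a)+\delta(b)$). What rescues this in Kalton's setting is the sublattice structure of $\lip(M)\cap\mathcal C_\tau(M)$ (his Proposition~3.4), which you never invoke. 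Second, and more importantly, the decisive step — surjectivity of the restriction $\F(M)\to X^*$, equivalently $\sigma(\F(M),X)$-compactness of $B_{\F(M)}$ — is only asserted, with a vague pointer to Weaver's normal functionals; you yourself flag it as "the main obstacle", which is exactly an admission that the proof is not there. The paper avoids all of this by running the argument through the Petun\={\i}n--Pl\={\i}\v{c}ko theorem: $X$ is a closed, separating (lattice plus uniform separation, Proposition~3.4 of \cite{Kalton04}) subspace of $\Lip(M)=\F(M)^*$ consisting of norm-attaining functionals, where the norm-attainment verification is the place where the compactness of $(M,\tau)$, the little-Lipschitz condition (which keeps the distances $d(x_n,y_n)$ of an almost-norming pair bounded away from $0$), and the $\tau$-lower semicontinuity of $d$ from Lemma~\ref{lemma:lsc} all enter. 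Supplying an argument of that type (or some other proof of the compactness you invoke) is what is missing from your proposal.
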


In what follows we provide a slightly different proof of Kalton's result, based now on Petun\={\i}n-Pl\={\i}\v cko theorem (see \cite{GodefroyPP,Petunin}). 
We recall that this last theorem asserts that a closed subspace $S \subset X^*$ of the dual of a separable Banach space $X$ is an isometric predual of $X$ (that is $S^* = X$) if, and only if, $S$ is composed of norm-attaining functionals and $S$ separates the points of $X$. 
The use of this theorem to produce preduals to free spaces has become quite common (see~\cite{DaletThesis,dal1, dal2,vector} and also our Examples~\ref{ex:AG} and~\ref{ex:NaturalNonNatural}).
The benefit of this proof is that it avoids the metrizability assumption of the topology $\tau$ present in Kalton's original exposition of this result. 

In the proof we will also need the following lemma which restates in a general framework the first step of Kalton's proof.
\begin{lemma}\label{lemma:lsc}
Let $(M,d)$ be a metric space such that there is a topology $\tau$ on $M$ and a subset $X \subset \Lip(M) \cap \mathcal C_\tau(M)$ which $1$-S.P.U. Then $d:(M,\tau)^2 \to \Real$ is l.s.c.
\end{lemma}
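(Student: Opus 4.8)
The plan is to show lower semicontinuity of $d:(M,\tau)^2\to\Real$ directly from the $1$-S.P.U. property by expressing $d$ as a supremum of $\tau$-continuous functions. Fix a pair $(x_0,y_0)\in M^2$ and a real number $c<d(x_0,y_0)$; I must find a $\tau\times\tau$-neighbourhood of $(x_0,y_0)$ on which $d>c$. Choose $\varepsilon>0$ small enough that $d(x_0,y_0)/(1+\varepsilon)>c$, and use $1$-S.P.U. to pick $f\in X$ with $f(x_0)-f(y_0)=d(x_0,y_0)$ and $\norm{f}_L<1+\varepsilon$. For any $x,y\in M$ we then have
\[
d(x,y)\geq \frac{f(x)-f(y)}{\norm{f}_L} > \frac{f(x)-f(y)}{1+\varepsilon},
\]
so it suffices to observe that the function $(x,y)\mapsto (f(x)-f(y))/(1+\varepsilon)$ is $\tau\times\tau$-continuous (since $f\in\mathcal C_\tau(M)$) and takes the value $d(x_0,y_0)/(1+\varepsilon)>c$ at $(x_0,y_0)$. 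Hence $(f(x)-f(y))/(1+\varepsilon)>c$ on a $\tau\times\tau$-neighbourhood of $(x_0,y_0)$, and on that same neighbourhood $d(x,y)>c$.

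Equivalently, and slightly more conceptually, one can note that the above shows
\[
d(x,y)=\sup\set{f(x)-f(y): f\in X,\ \norm{f}_L\leq 1}
\]
is a supremum of $\tau\times\tau$-continuous functions (after rescaling the family obtained from $1$-S.P.U. by factors $1/\norm f_L$, each such quotient being $\tau$-continuous and bounded above by $d$), and a pointwise supremum of continuous functions is l.s.c. Either formulation gives the claim, so there is essentially no obstacle here: the argument is a direct unwinding of the definitions, the only point requiring a moment's care being the bookkeeping with the factor $1+\varepsilon$ to pass from "$\leq 1$" to the functions actually produced by $1$-S.P.U.
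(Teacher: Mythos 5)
Your proof is correct and is essentially the paper's argument: both use the $1$-S.P.U.\ functions, which are $\tau$-continuous and, after dividing out the factor $1+\varepsilon$, lie below $d$, to minorise $d$ near any given pair of points. The only difference is cosmetic: the paper phrases lower semicontinuity via $\liminf$ along $\tau$-convergent nets, while you phrase it via superlevel-set neighbourhoods (equivalently, exhibiting $d$ as a supremum of $\tau\times\tau$-continuous functions).
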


\begin{proof}
Let $\{x_\alpha\}$, $\{y_\alpha\}$ be $\tau$-convergent nets in $M$ with limits $x$ and $y$, respectively. 
Given $\varepsilon>0$, find $f\in X$ such that $f(y)-f(x)\geq d(x,y)-\varepsilon$ and $||f||_L=1$. Then
\[ d(x,y)-\varepsilon \leq \lim_{\alpha} f(y_\alpha)-f(x_\alpha)\leq \liminf_{\alpha} d(x_\alpha,y_\alpha)\]
and the arbitrariness of $\varepsilon$ yields the desired conclusion. 
\end{proof}

\begin{proof}[Proof of Proposition~\ref{prop:dualityKalton}]
First of all, according to Lemma \ref{lemma:lsc}, note that $d$ is $\tau$-l.s.c. 

Now, we need to verify the conditions of Petun\={\i}n and Pl\={\i}\v cko's theorem. 
First, since $M$ is bounded, we see that $S$ is a closed subspace of $\Lip(M)$. Second, $S$ is separating since it is a lattice and separates the points of $M$ uniformly (see \cite{Kalton04}[Proposition 3.4]). 

Finally it remains to show that $X$ is made of norm-attaining functionals. To this end, let $f\in S_X$ and take sequences $\{x_n\}$, $\{y_n\}$ in $M$ such that $\lim_n \frac{f(x_n)-f(y_n)}{d(x_n,y_n)}=1$. 
Note that $\inf_n d(x_n,y_n)=:\theta >0$ since $f\in \lip(M)$. 
By the compactness of $(M,\tau)$ and the boundedness of $d$, we can find subnets $\{x_\alpha\}$ of $\{x_n\}$ and $\{y_\alpha\}$ of $\{y_n\}$ such that $x_\alpha \stackrel{\tau}{\to} x$, $y_\alpha\stackrel{\tau}{\to} y$ and $d(x_\alpha,y_\alpha)\to C>0$. 
Then,
\[ 1 = \lim_\alpha\frac{f(x_\alpha)-f(x_\alpha)}{d(x_\alpha, y_\alpha) }
\to \frac{f(x)-f(y)}{C}\leq \frac{f(x)-f(y)}{d(x,y)}.\]
Thus $X$ is made up of norm-attaining functionals. 

To conclude, we get that $S$ is a natural predual by just applying Proposition \ref{prop:suffnatural}.
\end{proof}

The next proposition testifies that Kalton's theorem is the only way to build a natural predual if the predual is moreover required to be a subspace of little Lipschitz functions.

\begin{proposition}\label{p:NaturalLittleLip}
Let $M$ be a bounded metric space and let $X^*=\Free(M)$ be a natural predual such that $X \subseteq \lip(M)$. 
Then there exists a topology $\tau$ on $M$ such that 
$(M,\tau)$ is compact, 
the metric $d:(M,\tau)^2 \to \Real$ is l.s.c. 
and $X=\lip(M) \cap \mathcal C_\tau(M)$.
\end{proposition}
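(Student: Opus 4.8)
The idea is to let $X$ itself generate the topology $\tau$, and then to rule out that $\lip(M)\cap\mathcal C_\tau(M)$ is strictly larger than $X$.

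First I would let $\tau$ be the topology on $M$ for which $\delta\colon M\to(\delta(M),\sigma(\Free(M),X))$ is a homeomorphism; this makes sense since $\delta$ is injective. As $M$ is bounded, $\delta(M)$ is a bounded, and (by the definition of natural predual) $\sigma(\Free(M),X)$-closed, subset of $\Free(M)=X^*$, hence it is $\sigma(\Free(M),X)$-compact by Banach--Alaoglu; therefore $(M,\tau)$ is compact. Since $X$ is an isometric predual, it is $1$-norming for $\Free(M)$, in particular total; every $f\in X$, being the evaluation functional on $\Free(M)=X^*$, is $\sigma(\Free(M),X)$-continuous, hence $\tau$-continuous on $M$, so $X\subseteq\lip(M)\cap\mathcal C_\tau(M)=:Y$. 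Finally $d(x,y)=\|\delta(x)-\delta(y)\|$ is $\tau$-l.s.c., because the norm of $\Free(M)=X^*$ is $\sigma(\Free(M),X)$-l.s.c. and $(x,y)\mapsto\delta(x)-\delta(y)$ is continuous from $(M,\tau)^2$ to $(\Free(M),\sigma(\Free(M),X))$ (one could equally invoke Lemma~\ref{lemma:lsc}, as $1$-norming readily implies that $X$ $1$-S.P.U.). So the whole point is the inclusion $Y\subseteq X$.

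Fix $g\in Y$ with $\|g\|_L\le1$. I would show that $g$, regarded as an element of $\Free(M)^*$, is $\sigma(\Free(M),X)$-continuous; as the continuous dual of $(\Free(M),\sigma(\Free(M),X))$ is $X$, this gives $g\in X$. Write $\sigma:=\sigma(\Free(M),X)$ and $\widetilde V:=\overline{V}^{\,\sigma}\subseteq B_{\Free(M)}$. The crucial step is the following description of $\widetilde V$: every $m\in\widetilde V$ is either $0$ or $\frac1C(\delta(x)-\delta(y))$ for some $x\ne y$ in $M$ and some $C\in(0,\diam(M)]$. Indeed, writing $m=\lim_\beta m_{x_\beta y_\beta}$, compactness of $(M,\tau)$ allows us to pass to a subnet with $x_\beta\to x$, $y_\beta\to y$ in $\tau$ and $d(x_\beta,y_\beta)\to C\in[0,\diam(M)]$; if $C>0$ then $m=\frac1C(\delta(x)-\delta(y))$, and if $C=0$ then $\<f,m_{x_\beta y_\beta}\>=\frac{f(x_\beta)-f(y_\beta)}{d(x_\beta,y_\beta)}\to0$ for every $f\in X\subseteq\lip(M)$, whence $\<f,m\>=0$ for all $f\in X$ and so $m=0$ by totality of $X$. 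Using this, I would check that $m\mapsto\<g,m\>$ is $\sigma$-continuous on $\widetilde V$: given $m_i\to m_0$ in $\widetilde V$, write each nonzero $m_i$ as $\frac1{C_i}(\delta(x_i)-\delta(y_i))$ and pass to a subnet with $x_i\to x$, $y_i\to y$ in $\tau$, $C_i\to C\in[0,\diam(M)]$; the same argument shows $C=0$ can only happen when $m_0=0$, in which case $d(x_i,y_i)\le C_i\to0$ and hence $|\<g,m_i\>|\le\frac{|g(x_i)-g(y_i)|}{d(x_i,y_i)}\to0$ since $g\in\lip(M)$; while if $C>0$ then $m_0=\frac1C(\delta(x)-\delta(y))$ and $\<g,m_i\>=\frac{g(x_i)-g(y_i)}{C_i}\to\frac{g(x)-g(y)}{C}=\<g,m_0\>$ since $g\in\mathcal C_\tau(M)$. (If $m_i=0$ cofinally the argument is the same.)

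The last step bootstraps this to $B_{\Free(M)}$. Since $B_{\Free(M)}=\cconv(V)$ equals the $\sigma$-closed convex hull of $\widetilde V$, the barycenter map $b$ from the space $\mathcal P(\widetilde V)$ of Radon probability measures on $\widetilde V$ (with its weak topology) onto $B_{\Free(M)}$ is a continuous surjection of compact Hausdorff spaces, hence a quotient map. For $\mathbb P\in\mathcal P(\widetilde V)$ one has $\<g,b(\mathbb P)\>=\int_{\widetilde V}\<g,m\>\,d\mathbb P(m)$: this is the defining property of barycenters for $g$ replaced by any $f\in X$, and it extends to $g$ by a dominated-convergence argument, approximating $g$ by a net in $X$ in the topology $\sigma(\Lip(M),\Free(M))$ (in which $X$ is dense, being $1$-norming) and using that $m\mapsto\<g,m\>$ is bounded and, by the previous paragraph, $\sigma$-continuous on $\widetilde V$. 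Hence $g\circ b$ is the continuous map $\mathbb P\mapsto\int\<g,m\>\,d\mathbb P$, so, $b$ being a quotient map, $\<g,\cdot\>$ is $\sigma$-continuous on $B_{\Free(M)}$; the Banach--Dieudonn\'e theorem then upgrades this to $\sigma$-continuity of $g$ on all of $\Free(M)$, i.e. $g\in X$. This establishes $Y=X$, and the topology $\tau$ above witnesses the assertion. The step I expect to be the real obstacle is the description of $\widetilde V$ together with the $\sigma$-continuity of $\<g,\cdot\>$ on it — effectively, controlling $g$ on the ``molecules at infinity'' — and it is exactly here that $X\subseteq\lip(M)$ is used decisively: it forces every limit point of $V$ to be either $0$ or a genuine (sub-unit) multiple of a molecule, and combined with $g\in\lip(M)$ it then controls $\<g,\cdot\>$ near the degenerate limit $0$. (For separable $M$ one can bypass this by invoking the Petun\={\i}n--Pl\={\i}\v cko theorem as in the proof of Proposition~\ref{prop:dualityKalton} to see that $Y$ is itself an isometric predual of $\Free(M)$; then $Y=X$, because the inclusion $X\hookrightarrow Y$ has the identity of $\Free(M)$ as adjoint and is therefore onto.)
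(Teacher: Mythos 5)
The first half of your argument (pulling $\tau$ back from $\sigma(\Free(M),X)$ via $\delta$, compactness from $w^*$-closedness plus boundedness, lower semicontinuity of $d$, and the inclusion $X\subseteq Y:=\lip(M)\cap\mathcal C_\tau(M)$) is exactly the paper's, and your analysis of $\widetilde V=\overline{V}^{\sigma}$ together with the $\sigma$-continuity of $\langle g,\cdot\rangle$ on $\widetilde V$ is correct. The genuine gap is the step you treat as routine: the bootstrap from $\widetilde V$ to $B_{\Free(M)}$. The barycentric identity $\langle g,b(\mathbb P)\rangle=\int_{\widetilde V}\langle g,m\rangle\,d\mathbb P(m)$ is not established. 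Your justification is dominated convergence along a net $f_\alpha\to g$ in $\sigma(\Lip(M),\Free(M))$, but dominated convergence fails for nets: on $[0,1]$ with Lebesgue measure, the net indexed by finite sets $F$ of continuous functions $h_F$ with $0\le h_F\le 1$, $h_F=0$ on $F$ and $\int h_F\ge 1/2$ converges pointwise to $0$ while the integrals do not. Worse, the identity itself is false at this level of generality: a norm-continuous linear functional on a dual space which is $w^*$-continuous on the (compact) support of a Radon measure need not satisfy the barycenter formula. For instance, in $\mathcal M[0,1]=\mathcal C[0,1]^*$ take $S=\{\delta_t:t\in[0,1]\}$, let $\mathbb P$ be the image of Lebesgue measure (whose barycenter is Lebesgue measure $\lambda$), and let $\phi(\nu)=\sum_t\nu(\{t\})$ be the atomic-mass functional; then $\phi\equiv 1$ on $S$ (in particular $w^*$-continuous there), yet $\phi(\lambda)=0\neq 1=\int_S\phi\,d\mathbb P$. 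So boundedness plus $\sigma$-continuity of $\langle g,\cdot\rangle$ on $\widetilde V$ cannot by itself yield the formula; obtaining it is essentially equivalent to the $w^*$-continuity of $g$ on the ball that you are trying to prove. Thus the obstacle is not, as you suggest, the description of $\widetilde V$, but precisely the passage from $\widetilde V$ to its closed convex hull.

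By contrast, the remark you relegate to a closing parenthesis is, in substance, the paper's proof: the paper observes that $Y=\lip(M)\cap\mathcal C_\tau(M)$ falls under Proposition~\ref{prop:dualityKalton} (since $X$ is $1$-norming, hence $1$-S.P.U., so is $Y$), whence $Y^*=\Free(M)$; then $\sigma(\Free(M),X)\subseteq\sigma(\Free(M),Y)$ and both render $B_{\Free(M)}$ compact, so they coincide on the ball, and Banach--Dieudonn\'e upgrades this to all of $\Free(M)$, giving $X=Y$. Your alternative ending (the adjoint of the inclusion $X\hookrightarrow Y$ is the identity of $\Free(M)$, hence the inclusion is onto) is a correct shortcut for that last step. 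Note that this route, like the paper's, passes through the Petun\={\i}n--Pl\={\i}\v{c}ko theorem and therefore works when $\Free(M)$ is separable; so the sound part of your proposal is the side remark, while the barycentric argument intended to cover general bounded $M$ does not work as written.
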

\begin{proof} 
We put $\tau:=\set{\delta^{-1}(U):U \in \sigma(\Free(M),X)}$.
Since $\delta(M)$ is $\sigma(\Free(M),X)$-closed 
and bounded, $(M,\tau)$ is compact.
Recall that $d(x,y)=\norm{\delta(x)-\delta(y)}$ and $\norm{\cdot}$ is $\sigma(\Free(M),X)$-lsc, so the metric $d$ is $\tau$-lsc.
Since 
\[
X=\set{x^*\in\Free(M)^*: x^* \mbox{ is } \sigma(\Free(M),X)-\mbox{continuous}}
\]
and $X \subset \lip(M)$, we get that $X\subseteq \lip(M,d)\cap \mathcal C_\tau(M)=:Y$. 
This means that $\sigma(\Free(M),Y)$ is stronger than $\sigma(\Free(M),X)$. 
On the other hand, Proposition~\ref{prop:dualityKalton} yields that $Y^*=\Free(M)$. Therefore, by compactness, $\sigma(\Free(M),X)$ and $\sigma(\Free(M),Y)$ coincide on $\closedball{\Free(M)}$.
As a consequence of Banach-Dieudonn\'e theorem, they coincide on $\Free(M)$. 
This means that 
\[
\begin{split}
X&=\set{x^* \in \Free(M)^*:x^* \mbox{ is }\sigma(\Free(M),X)-\mbox{continuous}}\\
&=\set{x^* \in \Free(M)^*:x^* \mbox{ is }\sigma(\Free(M),Y)-\mbox{continuous}}=Y.
\end{split}
\]
\end{proof}

But one should be aware that not all natural preduals are contained in the space of little Lipschitz functions.
\begin{example}
Let $M=\set{\frac1n:n\in \Natural} \cup \set{0}$ with the distance comming from the reals.
Then it is well known that $\Free(M)$ is isometrically isomorphic to $\ell_1$.
Further we know (Theorem 2.1 in~\cite{dal1}) that $\lip(M)$ is isometrically a predual.
Since $M$ is compact, every predual is natural.
So our Proposition~\ref{p:NaturalLittleLip} and the fact that $M$ is compact show that any isometric predual of $\ell_1$ which is not isometric to $\lip(M)$ intersects the complement of $\lip(M)$.
\end{example}

Note that $\Lip(M)=\lip(M)$ when $M$ is uniformly discrete. This observation and the previous results yield the following corollary.

\begin{corollary}\label{cor:unifdiscduality} 
Let $(M, d)$ be a uniformly discrete bounded separable metric space with $0 \in M$. 
Let $X$ be a Banach space.
Then it is equivalent:
\begin{itemize}
\item[(i)] $X$ is a natural predual of $\Free(M)$.
\item[(ii)] There is a Hausdorff topology $\tau$ on $M$ such that $(M, \tau)$ is compact, $d$ is $\tau$-l.s.c. and $X=\Lip(M,d) \cap \mathcal C_\tau(M)$ equipped with the norm $\norm{\cdot}_L$.
\end{itemize}
\end{corollary}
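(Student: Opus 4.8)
The plan is to obtain both implications from the results of Section~\ref{s:Duality}, using throughout the identity $\Lip(M,d)=\lip(M,d)$ valid for uniformly discrete $M$; in particular the space $X=\Lip(M,d)\cap\mathcal C_\tau(M)$ appearing in~(ii) is literally $\lip(M)\cap\mathcal C_\tau(M)$. For (i)$\Rightarrow$(ii): if $X$ is a natural predual, the canonical isometric embedding $X\hookrightarrow X^{**}=\Free(M)^{*}=\Lip(M)$ realises $X$ as a subspace of $\Lip(M)=\lip(M)$, so Proposition~\ref{p:NaturalLittleLip} applies directly and produces a topology $\tau$ with $(M,\tau)$ compact, $d$ $\tau$-l.s.c., and $X=\lip(M)\cap\mathcal C_\tau(M)=\Lip(M,d)\cap\mathcal C_\tau(M)$ (with the norm $\norm{\cdot}_L$, since the bidual embedding is isometric). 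It then remains only to observe that the topology $\tau=\set{\delta^{-1}(U):U\in\sigma(\Free(M),X)}$ constructed there is Hausdorff, which is immediate because $X$ is norming, hence separating, for $\Free(M)$, so that $\sigma(\Free(M),X)$ is Hausdorff and $\delta$ is injective.

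For (ii)$\Rightarrow$(i) I would apply Proposition~\ref{prop:dualityKalton} to $X=\lip(M)\cap\mathcal C_\tau(M)$: separability, boundedness and the base point are given, and compactness of $(M,\tau)$ is part of~(ii), so the entire argument reduces to checking that $X$ is $1$-S.P.U., and this is the only step that is not purely formal. The naive candidate for the required function --- the function $f_{xy}$ of Lemma~\ref{lemma:IKWfunction}, or the McShane--Whitney extension of the two-point datum $x\mapsto d(x,y)$, $y\mapsto 0$ --- has Lipschitz norm $\le 1$ and the correct value difference, but is assembled out of $d(x,\cdot)$ and $d(y,\cdot)$, which under the hypotheses of~(ii) are only $\tau$-l.s.c., so it need not belong to $\mathcal C_\tau(M)$. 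The hard part is therefore to manufacture a genuinely $\tau$-continuous function that is still (almost) $1$-Lipschitz and has the exact prescribed behaviour.

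I would do this as follows. Fix $x\ne y$ and $\varepsilon>0$, and let $\theta>0$ be a uniform lower bound for $d$. Choose values on $F=\set{0,x,y}$ with $h(0)=0$, $h(x)-h(y)=d(x,y)$ and $h$ $1$-Lipschitz on $F$ (possible by the triangle inequality), and set $\tilde h(t)=\min_{s\in F}\bigl(h(s)+d(s,t)\bigr)$; then $\tilde h$ is $1$-Lipschitz, bounded, extends $h$, and is $\tau$-l.s.c. because $d$ is (here the l.s.c. hypothesis of~(ii) enters). Since $(M,\tau)$ is compact Hausdorff, every bounded $\tau$-l.s.c. function is the pointwise supremum of its $\tau$-continuous minorants; covering $M$ by the open sets $\set{g'>\tilde h-\eta}$ over continuous $g'\le\tilde h$, extracting a finite subcover and taking the maximum gives, for any $\eta>0$, a function $g\in\mathcal C_\tau(M)$ with $\tilde h-\eta\le g\le\tilde h$. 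Now uniform discreteness does the decisive work: $\abs{g(s)-g(t)}\le\abs{\tilde h(s)-\tilde h(t)}+2\eta\le d(s,t)+2\eta\le(1+2\eta/\theta)\,d(s,t)$, so $\norm{g}_L\le 1+2\eta/\theta$. Finally, since $g$ agrees with $h$ on $F$ up to $\eta$, I would correct it with Urysohn bumps: choose $\phi_s\in\mathcal C_\tau(M)$ with $0\le\phi_s\le 1$, $\phi_s(s)=1$ and $\phi_s\equiv 0$ on $F\setminus\set{s}$ (so $\norm{\phi_s}_L\le 1/\theta$), and put $f=g+\sum_{s\in F}(h(s)-g(s))\phi_s$. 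Then $f\in\lip(M)\cap\mathcal C_\tau(M)$, $f$ coincides with $h$ on $F$ --- whence $f(0)=0$ and $f(x)-f(y)=d(x,y)$ --- and $\norm{f}_L\le 1+2\eta/\theta+3\eta/\theta<1+\varepsilon$ once $\eta$ is chosen small. This shows $X$ is $1$-S.P.U., so Proposition~\ref{prop:dualityKalton} yields that $X$ is a natural predual of $\Free(M)$, which is~(i).
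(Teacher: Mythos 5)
Your (i)$\Rightarrow$(ii) is exactly the paper's argument: since $M$ is uniformly discrete, $\Lip(M)=\lip(M)$, so the canonical isometric embedding of $X$ into $X^{**}=\Lip(M)$ puts you in the hypotheses of Proposition~\ref{p:NaturalLittleLip}, and your Hausdorff remark is a harmless addition. Your reduction of (ii)$\Rightarrow$(i) to the $1$-S.P.U. hypothesis of Proposition~\ref{prop:dualityKalton} also matches the paper; the difference is that you try to verify $1$-S.P.U. by hand, and that is where there is a genuine gap. The step ``$\tilde h$ is the pointwise supremum of its $\tau$-continuous minorants, hence by compactness there is $g\in\mathcal C_\tau(M)$ with $\tilde h-\eta\le g\le\tilde h$'' is not valid: the sets $\set{g'>\tilde h-\eta}$ are superlevel sets of the \emph{upper} semicontinuous functions $g'-\tilde h$, so they need not be $\tau$-open and the finite subcover argument does not apply. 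Indeed, uniform one-sided approximation of an l.s.c.\ function by a continuous one fails in general (consider the indicator of an open, non-closed set), and it fails under the very hypotheses of the corollary: take $M=\set{0,a}\cup\set{b_n:n\in\Natural}$ with $d(0,a)=1$ and all other distances equal to $2$, and let $\tau$ be the compact Hausdorff topology in which $b_n\to a$ and all other points are isolated (one checks $d$ is $\tau$-l.s.c.). For $x=0$, $y=a$ your data force $h(0)=0$, $h(a)=-1$, and the McShane envelope satisfies $\tilde h(b_n)=1$ for all $n$ while $\tilde h(a)=-1$; any $\tau$-continuous $g\le\tilde h$ with $g\ge\tilde h-\eta$ on the $b_n$'s would give $g(a)=\lim_n g(b_n)\ge 1-\eta>-1\ge g(a)$, a contradiction for $\eta<2$. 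So the function $g$ your construction needs simply does not exist, and the subsequent Urysohn correction cannot repair this, because the admissible continuous norm-one extensions (e.g.\ $f(0)=0$, $f\equiv-1$ on $\set{a}\cup\set{b_n}$) are uniformly far from $\tilde h$: the obstruction is to approximating this particular extension, not to extending.

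This is precisely why the paper does not argue by approximating a fixed, generally non-$\tau$-continuous, extension: it invokes Matou\v{s}kov\'a's extension theorem \cite{mat}, which, given compactness of $(M,\tau)$ and $\tau$-lower semicontinuity of $d$, extends the two-point datum $f(x)=0$, $f(y)=d(x,y)$ to a function in $\Lip(M)\cap\mathcal C_\tau(M)$ with the same Lipschitz constant; with that extension in hand, Proposition~\ref{prop:dualityKalton} concludes as you intended. To fix your write-up you should either cite that extension theorem (as the paper does) or supply a construction that produces a continuous Lipschitz extension directly rather than by uniform approximation of the McShane envelope; the remaining parts of your argument (the Lipschitz estimates exploiting uniform discreteness, the bump correction, and the appeal to Kalton's duality result) are fine once such an extension is available.
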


\begin{proof}
(ii) $\Rightarrow$ (i) 
Given $x,y\in M$, $x\neq y$, define $f\colon\{x,y\}\to \mathbb R$ by $f(x)=0$ and $f(y) = d(x,y)$. 
By Matouskova's extension theorem \cite{mat}, there is $\tilde{f}\in \Lip(M)\cap \mathcal C_\tau(M)$ extending $f$ such that $||\tilde{f}||_L=1$. 
Thus, the hypotheses of Proposition \ref{prop:dualityKalton} are satisfied. 

The implication (i) $\Rightarrow$ (ii) is contained in Proposition~\ref{p:NaturalLittleLip}.
\end{proof}

In what follows we are going to develop yet another sufficient condition for an isometric predual to be natural with the goal to show that certain preduals constructed by Weaver in~\cite{weaverduality} are natural.

\begin{proposition}\label{c:SuffNatural}
Let $M$ be a uniformly discrete, bounded, separable metric space and let $X\subset \Lip(M)$ be a Banach space such that $X^*=\Free(M)$ isometrically.
If for every $x \in M\setminus\set{0}$ the indicator function $\indicator{\set{x}}$ belongs to $X$,
then $X$ is a natural predual of $\Free(M)$.
Moreover $0$ is the unique accumulation point of $(\delta(M),w^*)$ and $X$ is isomorphic to $c_0$.
\end{proposition}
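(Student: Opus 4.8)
The plan is to verify the hypotheses of Corollary~\ref{cor:unifdiscduality}, or more directly of Proposition~\ref{prop:suffnatural}, by exhibiting an explicit compact Hausdorff topology $\tau$ on $M$ with respect to which every element of $X$ is continuous. The natural candidate is the topology $\tau$ that makes $M$ the one-point compactification of the discrete space $M\setminus\set{0}$, with $0$ as the point at infinity; that is, every point of $M\setminus\set{0}$ is isolated, and the neighbourhoods of $0$ are the cofinite sets containing $0$. Since $M$ is uniformly discrete, bounded and separable, $M\setminus\set{0}$ is countable, so $(M,\tau)$ is a compact metrizable space. First I would check that every $f\in X$ is $\tau$-continuous: continuity at isolated points is automatic, and continuity at $0$ amounts to showing $f(x_n)\to f(0)$ for any injective sequence $(x_n)$ in $M\setminus\set{0}$. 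Here is where the hypothesis that each $\indicator{\set{x}}\in X$ is used: if some $f\in X$ failed to be $\tau$-continuous at $0$, one could, after passing to a subsequence, assume $f(x_n)\to c\neq f(0)$, and then test $f$ against a suitable $w^*$-cluster point in $B_{\Free(M)}$ of the sequence $\set{\delta(x_n)}$ — but the functionals $\indicator{\set{x}}$ force that cluster point to be $0$ while $f$ separates it from $0$, a contradiction. Concretely: for fixed $m$, $\langle \indicator{\set{x_m}},\delta(x_n)\rangle = 0$ for all $n\neq m$, so any $w^*$-cluster point $\mu$ of $\set{\delta(x_n)}$ satisfies $\langle \indicator{\set{x_m}},\mu\rangle = 0$ for every $m$; combined with $\mu\in B_{\Free(M)}\subset\clco(V)$ (hence supported, in the appropriate sense, on the $x_n$), this should force $\mu=0$. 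Granting $X\subset \Lip(M)\cap\mathcal C_\tau(M)$, Proposition~\ref{prop:suffnatural} gives that $\delta(M)$ is $\sigma(\Free(M),X)$-closed, i.e. $X$ is a natural predual.

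For the "moreover" part, once $\tau$ is identified as the one-point compactification topology, the fact that $0$ is the unique accumulation point of $(\delta(M),w^*)$ is immediate from the coincidence of $\tau$ and the $w^*$-topology on $\delta(M)$ (established as in the proof of Proposition~\ref{prop:suffnatural}): the only non-isolated point of $(M,\tau)$ is $0$. Alternatively one argues directly: any $w^*$-cluster point of a net in $\delta(M\setminus\set{0})$ using infinitely many distinct points is killed by every $\indicator{\set{x}}$, hence is $0$, and $\delta(M\setminus\set0)$ has no other accumulation points since an injective sequence cannot $w^*$-converge to some $\delta(x_0)$ with $x_0\neq 0$ (test against $\indicator{\set{x_0}}$). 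Then $X$ embeds isomorphically into $\mathcal C_\tau(M)\cong \mathcal C(\omega)$ for a convergent sequence, which is isomorphic to $c_0$; conversely $\Free(M)\cong X^*$ is then isomorphic to a dual of a subspace of $c_0$, and since $\Free(M)$ of a uniformly discrete bounded space is isomorphic to $\ell_1$ (as $\Lip(M)=\lip(M)=\ell_\infty$-like and $\Free(M)=\ell_1$ in the countable uniformly discrete bounded case), one concludes $X\cong c_0$ by identifying $X$ as a predual of $\ell_1$ that is a subspace of $c_0$ containing the coordinate functionals, hence all of $c_0$.

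The main obstacle I anticipate is the rigorous argument that a $w^*$-cluster point $\mu$ of $\set{\delta(x_n)}$ which is annihilated by every $\indicator{\set{x}}$, $x\in M\setminus\set0$, must be zero. One cannot naively talk about the "support" of an element of $\Free(M)$, so the cleanest route is probably via the decomposition $B_{\Free(M)}=\clco(V)$ together with Proposition~\ref{prop:Vweaklyclosed}: actually, since the $x_n$ are all distinct and uniformly separated, and since $\Free(\set{x_n}\cup\set0)$ sits isometrically in $\Free(M)$ as a subspace isomorphic to $\ell_1$ with unit vectors proportional to $\delta(x_n)$, the functionals $\indicator{\set{x_n}}$ restricted there are (proportional to) the coordinate functionals of $\ell_1$; an element of the bidual $\ell_\infty^*$ that is $w^*$-cluster of the basis vectors and kills all coordinate functionals is a purely finitely-additive mean, which is not in $\ell_1=\Free$, unless it is $0$ — but we are assuming $\mu\in\Free(M)$, forcing $\mu=0$. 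Making this reduction to $\ell_1$ precise, and checking the isometric embedding $\Free(\set{x_n}\cup\set0)\hookrightarrow\Free(M)$ behaves as claimed, is the technical heart; everything else is bookkeeping with the definitions of $\tau$ and $\sigma(\Free(M),X)$.
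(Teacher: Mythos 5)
Your overall route is viable and in fact close in spirit to the paper's, but the step you yourself single out as the technical heart is exactly where the argument as written breaks down. You need: if $\mu$ is a $\sigma(\Free(M),X)$-cluster point of $\set{\delta(x_n)}$ and $\duality{\indicator{\set{x}},\mu}=0$ for the indicators, then $\mu=0$. Your first suggestion (``$\mu\in\clco(V)$, hence supported on the $x_n$'') is not justified, as you note. Your second suggestion does not work either: the cluster point $\mu$ is an element of $\Free(M)$ obtained from the topology $\sigma(\Free(M),X)$, and there is no reason for it to lie in $\Free(\set{x_n}\cup\set{0})$ or to be represented by an element of that subspace's bidual $\ell_\infty^*$; restricting the functionals $\indicator{\set{x_n}}$ to the subspace therefore tells you nothing about $\mu$, and the ``purely finitely additive mean'' dichotomy is about the wrong duality. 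The correct (and simple) fix is global: since $M$ is separable and uniformly discrete it is countable, so $(\delta(x))_{x\in M\setminus\set{0}}$ is a Schauder basis of $\Free(M)$ equivalent to the unit vector basis of $\ell_1$, and its biorthogonal functionals are precisely the $\indicator{\set{x}}$. Moreover $\duality{\indicator{\set{x}},\mu}=0$ for \emph{every} $x\in M\setminus\set{0}$, not only for the $x_m$ of the sequence (for $x\notin\set{x_n}$ the pairing with each $\delta(x_n)$ is already $0$). Hence $\mu=\sum_{x}\duality{\indicator{\set{x}},\mu}\,\delta(x)=0$. This is exactly the content of the paper's Lemma~\ref{l:BasisFact} (biorthogonal functionals in the predual force the basis to be $w^*$-null), which is the pivot of the paper's proof.

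Once that lemma-type fact is in place, the rest of what you propose goes through and differs from the paper only in packaging: you verify Proposition~\ref{prop:suffnatural} directly with the explicit one-point compactification topology and obtain $X\cong c_0$ by sandwiching $\overline{\operatorname{span}}\set{\indicator{\set{x}}:x\neq 0}\subseteq X\subseteq \Lip(M)\cap\mathcal C_\tau(M)$ (using that on a bounded uniformly discrete space the Lipschitz norm is equivalent to the sup norm, so the right-hand side is $c_0(M\setminus\set{0})$ and a closed subspace of it containing all unit vectors is everything), whereas the paper deduces $\delta(x)\to 0$ weak$^*$ from Lemma~\ref{l:BasisFact}, takes $\tau$ to be the induced $w^*$-topology, and identifies $X=\Lip(M)\cap\mathcal C_\tau(M)=c_0(M\setminus\set{0})$ via Corollary~\ref{cor:unifdiscduality}. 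Your sandwich argument for the $c_0$ statement is a legitimate shortcut that avoids Corollary~\ref{cor:unifdiscduality}, but the naturality part cannot be salvaged without the biorthogonal/totality argument above, so as it stands the proposal has a genuine gap at its central step.
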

The proof will be based on the following general fact.

\begin{lemma}\label{l:BasisFact}
Let $X,Y$ be Banach spaces such that $X^*=Y$ isometrically, $Y$ admits a bounded Schauder basis $\{u_n\}$ and the biorthogonal functionals $\{u_n^*\}$ belong to $X$.
Then $u_n \to 0$ weakly*.
\end{lemma}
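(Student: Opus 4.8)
The plan is to prove the general Banach space fact in Lemma~\ref{l:BasisFact} and then apply it with $Y=\Free(M)$. First I would recall that since $\{u_n\}$ is a Schauder basis of $Y$, every $y \in Y$ has a unique expansion $y = \sum_n u_n^*(y) u_n$, and in particular the coordinate functionals $u_n^*$ are bounded linear functionals on $Y$; the hypothesis is precisely that each $u_n^*$, viewed as an element of $Y^* = X^{**}$, actually lies in the predual copy $X \subset X^{**}$. To show $u_n \to 0$ in the $w^* = \sigma(Y,X)$ topology, it suffices to check $\langle u_n, x^* \rangle \to 0$ for every $x^* \in X$.

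The key step is the following density argument. The span of the biorthogonal functionals $\{u_n^*\}$ is $\sigma(X,Y)$-dense in $X$: indeed if $y \in Y = X^*$ annihilates every $u_n^*$, then all coordinates of $y$ vanish, hence $y = 0$, so by Hahn--Banach (for the dual pair $\langle X, Y\rangle$) the $\sigma(X,Y)$-closure of $\vspan\{u_n^*\}$ is all of $X$. Now for a fixed $x^* \in X$ and $\varepsilon > 0$, pick $g \in \vspan\{u_n^*\}$, say $g = \sum_{k=1}^{N} a_k u_k^*$, with $|\langle y, x^* - g\rangle| < \varepsilon$ for the relevant test vectors. But one has to be careful: $\sigma(X,Y)$-approximation only controls finitely many $y$'s at a time, whereas we need a uniform estimate over all the $u_n$'s simultaneously. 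The clean way around this is to use boundedness of the basis: let $C = \sup_n \norm{u_n} < \infty$ and let $K$ be the basis constant, so that the partial-sum projections $P_N y = \sum_{k \le N} u_k^*(y) u_k$ satisfy $\norm{P_N} \le K$. Then for $n > N$ we have $\langle u_n, g \rangle = \sum_{k=1}^N a_k u_k^*(u_n) = 0$, so $\langle u_n, x^* \rangle = \langle u_n, x^* - g\rangle$. Thus it is enough to produce, for each $\varepsilon$, an element $g \in \vspan\{u_n^*\}$ with $\norm{x^* - g}$ measured appropriately small against $\{u_n\}_{n>N}$ — and here the right tool is to apply the approximation in $X$ with its norm: since $\vspan\{u_n^*\}$ is norm-dense in the norm-closed span $Z := \csp{\norm{\cdot}}{\{u_n^*\}} \subseteq X$, and $Z$ is itself a Banach space whose dual is a quotient of $Y$ on which $\{u_n\}$ is still a basis (because $Z = X$ by the $\sigma(X,Y)$-density just established together with the fact that a convex set's norm and weak closures coincide), we get $Z = X$ in norm. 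Hence for $g \in \vspan\{u_n^*\}$ with $\norm{x^* - g} < \varepsilon/C$ we conclude $|\langle u_n, x^*\rangle| = |\langle u_n, x^* - g\rangle| \le \norm{u_n}\,\norm{x^* - g} < \varepsilon$ for all $n$ larger than the degree of $g$.

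The main obstacle I anticipate is exactly this interchange between the $\sigma(X,Y)$-density (which is what Hahn--Banach gives for free) and the norm-density needed for a uniform estimate; the resolution is the standard fact that for a convex subset of a Banach space the closure in the weak topology induced by a norming dual space coincides with the norm closure, so that $\vspan\{u_n^*\}$ being $\sigma(X,Y)$-dense in $X$ forces it to be norm-dense in $X$ (using that $Y$ is norming for $X$, which holds since $X^* = Y$). Once that is in place the estimate above finishes the lemma. Finally, to deduce the ``moreover'' part of Proposition~\ref{c:SuffNatural}, one applies the lemma: when $\indicator{\set{x}} \in X$ for all $x \in M \setminus \set{0}$ and $M$ is uniformly discrete, bounded and separable, the functions $\{\indicator{\set{x}}\}$ form (after normalisation) a bounded Schauder basis of $\Free(M)$ with biorthogonal functionals $m$-related to the $\delta(x)$, so $\delta(x) \to 0$ weakly* along any enumeration, giving that $0$ is the unique $w^*$-accumulation point of $\delta(M)$, hence $\delta(M)$ is $w^*$-closed and the predual is natural, while the basis being equivalent to the unit vector basis of $c_0$ (because the indicators of disjoint singletons in $\Lip(M) = \lip(M)$ behave like the $c_0$-basis under $\norm{\cdot}_L$) identifies $X$ with $c_0$.
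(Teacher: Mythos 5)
Your proof is correct, but it takes a genuinely different route from the paper's. The paper argues by compactness: $Y$ has a Schauder basis, hence is separable, hence so is its predual $X$, so bounded subsets of $Y=X^*$ are weak*-metrizable; every subsequence of $\{u_n\}$ then has a further subsequence weak*-converging to some $u\in Y$, and since $\duality{u_m^*,u_n}=\delta_{mn}\to 0$ for each fixed $m$, all coordinates of $u$ vanish, so $u=0$ because $\{u_n\}$ is a basis. You instead show that $\operatorname{span}\{u_n^*\}$ is norm-dense in $X$ and finish with the tail estimate $\abs{\duality{u_n,x^*}}=\abs{\duality{u_n,x^*-g}}\le \sup_k\norm{u_k}\,\norm{x^*-g}$ for all $n$ beyond the degree of $g$; this avoids metrizability and subsequence extraction entirely, works verbatim without invoking sequential weak* compactness, and even gives a quantitative tail bound, whereas the paper's argument is shorter. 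Two remarks. First, the ``standard fact'' you cite -- that a convex set's closure in the topology induced by a \emph{norming} subspace of the dual equals its norm closure -- is false in that generality: $c_0$ is norming for $\ell_1$, yet the norm-closed convex set $\set{x\in\ell_1:\sum_i x_i=1}$ contains the vectors $e_n$ and hence has $0$ in its $\sigma(\ell_1,c_0)$-closure. What saves your argument is that here $Y$ is the \emph{whole} dual $X^*$, so $\sigma(X,Y)$ is the weak topology of $X$ and Mazur's theorem applies; even more directly, you can drop the weak-topology detour altogether: if $\operatorname{span}\{u_n^*\}$ were not norm-dense in $X$, Hahn--Banach would produce a nonzero $y\in X^*=Y$ annihilating every $u_n^*$, and then $y=\sum_n\duality{u_n^*,y}u_n=0$, a contradiction. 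Second, minor points: the basis constant $K$ and the projections $P_N$ you introduce are never used, and in your closing aside on Proposition~\ref{c:SuffNatural} (which is beyond the lemma anyway) the roles are swapped -- the bounded basis of $\Free(M)$ is $\{\delta(x)\}_{x\neq 0}$, equivalent to the $\ell_1$-basis, and the indicators $\indicator{\set{x}}$ are its biorthogonal functionals sitting in $X$, not elements of $\Free(M)$.
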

\begin{proof}
We will show that every subsequence of $\{u_n\}$ admits a further subsequence that converges weakly* to 0.
So let us consider such subsequence. By the weak* compactness and separability, it admits a weak* convergent subsequence, let us call it $\{u_n\}$ again.
So we have $u_n \to u \in X$ weakly*.
But this means that for every $m \in \Natural$ we have 
\[
0=\lim_{n\to\infty}\duality{u_m^*,u_n}=\duality{u_m^*,u}.
\]
Thus $u=0$.
\end{proof}

\begin{proof}[Proof of Proposition \ref{c:SuffNatural}]
Since $M$ is bounded and uniformly discrete, the sequence $\sequence{\delta(x)}_{x \in M \setminus\set{0}}$ is a Schauder basis which is equivalent to the unit vector basis of $\ell_1$.
The biorthogonal functionals are exactly the indicator functions $\indicator{\set{x}}$ for $x\neq 0$.
Applying Lemma~\ref{l:BasisFact} we get that $\delta(M)$ is weak* closed and that $0$ is the unique w$^*$-accumulation point of $\delta(M)$. 
Let $\tau$ be the restriction of the $w^*$-topology to $M$.
Now Corollary~\ref{cor:unifdiscduality} yields that $X=\Lip(M) \cap C_\tau(M)$.
But, since $M$ is bounded an uniformly discrete, we have that $\Lip(M)$ are just all bounded functions that vanish at $0$. It follows immediately that $X=c_0(M\setminus \set{0})$.
\end{proof}

\begin{remark}
In \cite{weaverduality}, Weaver proved a duality result for \emph{rigidly locally compact} metric spaces. 
We recall that a locally compact metric space is said to be rigidly locally compact (see the paragraph before Proposition 3.3  in \cite{weaverduality}) if for every $r>1$ and every $x \in M$, the closed ball  $B(x,\frac{d(0,x)}r)$ is compact. 
The duality result of Weaver in particular implies that for a separable uniformly discrete bounded metric space $M$ which is rigidly compact, the space
\[ X = \left\{f\in \Lip(M) : \frac{f(\cdot)}{d(\cdot,0)}\in C_0(M)\right\}\]
is an isometric predual of $\Free(M)$.
Here $C_0(M)$ denotes the set of continuous functions which are arbitrarily small out of compact sets. 
Since it is obvious that the indicator functions $\indicator{\set{x}}$ belong to $X$, Proposition~\ref{c:SuffNatural} implies that $X$ is a natural predual of $\Free(M)$ and that $X$ is isomorphic to $c_0$.
This shows that in the case of uniformly discrete bounded spaces, Corollary~\ref{cor:unifdiscduality} covers the cases in which  Weaver's result ensures the existence of a predual.
\end{remark}

Moreover, there is a metric space which satisfies the hypotheses of Corollary \ref{cor:unifdiscduality} and which is not rigidly locally compact. 

\begin{example}
Let us consider the metric space $M = \{0,1\}\times \N$ equipped with the following distance: $d((0,n),(1,m))=2$ for $n,m \in \N$, and if $n\neq m$ we have $d((0,n),(0,m))=1$ and $d((1,n),(1,m))=1$. Then $M$ satisfies the assumptions of Corollary \ref{cor:unifdiscduality}. Indeed, declare $(0,1)$ to be the accumulation point of the sequence $\{(0,n)\}$, $(1,1)$ to be the accumulation point of the sequence $\{(1,n)\}$, and then declare all the other points isolated. Now independently of the choice of the distinguished point $0_M$, $M$ is not rigidly locally compact. For instance, say that $0_M = (0,n)$. Then for every $r>1$, the ball $B((1,1),d(0_M,(1,1))/r) = B((1,1),2/r)$ contains all the elements of the form $(1,m)$ with $m \in \N$. Consequently the considered ball is not compact, which proves that $M$ is not rigidly locally compact. 
\end{example}

\section{Extremal structure for spaces with natural preduals}\label{s:ExtStrucNatPred}

We are going to focus now on the extreme points in the free spaces that admit a natural predual.
Assuming moreover that the predual is a subspace of little Lipschitz functions we get an affirmative answer to one of our main problems. Note that this is an extension of Corollary 3.3.6 in \cite{weaver}, where it is obtained the same result under the assumption that $M$ is compact.

\begin{proposition}\label{prop:extmoleculesdual} Let $M$ be a bounded metric space. Assume that there is a subspace $X$ of $\lip(M)$ which is a natural predual of $\mathcal F(M)$. Then
\[ \ext(B_{\Free(M)})\subset \{ m_{xy} : x,y\in M, x\neq y\}.\]
\end{proposition}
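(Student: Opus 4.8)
The plan is to show that if $\mu \in \ext(B_{\Free(M)})$ but $\mu$ is not a molecule, then we can write $\mu$ as a nontrivial convex combination of two points of $B_{\Free(M)}$, contradicting extremality. The starting point is Milman's theorem: since $B_{\Free(M)} = \cconv(V)$ and the predual $X$ gives us a weak$^*$ topology $\sigma(\Free(M),X)$ in which $B_{\Free(M)}$ is compact, we have $\ext(B_{\Free(M)}) \subset \overline{V}^{\,\sigma(\Free(M),X)}$. So $\mu$ is a $\sigma(\Free(M),X)$-limit of a net of molecules $m_{x_\alpha y_\alpha}$. Because $X \subset \lip(M)$, the key point will be to control the behaviour of the distances $d(x_\alpha,y_\alpha)$: I would argue that $\inf_\alpha d(x_\alpha,y_\alpha) > 0$ cannot fail in a way that lets $\mu$ be extreme, essentially because the little-Lipschitz condition prevents the functionals in $X$ from "seeing" pairs of points that are too close, so if $d(x_\alpha, y_\alpha) \to 0$ along a subnet then $m_{x_\alpha y_\alpha} \to 0$ in $\sigma(\Free(M),X)$, forcing $\mu = 0 \notin \ext(B_{\Free(M)})$.

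Next, since $\delta(M)$ is $\sigma(\Free(M),X)$-closed, and $(M,\tau)$ is compact for $\tau$ the restricted weak$^*$ topology (by Proposition~\ref{prop:suffnatural} applied in reverse, or directly by Proposition~\ref{p:NaturalLittleLip}), I would pass to a subnet so that $x_\alpha \xrightarrow{\tau} x$ and $y_\alpha \xrightarrow{\tau} y$ for some $x,y \in M$, and so that $d(x_\alpha,y_\alpha) \to C$ for some $C > 0$ (using that $d$ is bounded and, after the previous paragraph, bounded below along the net). Then $\delta(x_\alpha) \to \delta(x)$ and $\delta(y_\alpha) \to \delta(y)$ weak$^*$, hence $m_{x_\alpha y_\alpha} = \frac{\delta(x_\alpha)-\delta(y_\alpha)}{d(x_\alpha,y_\alpha)} \to \frac{\delta(x)-\delta(y)}{C}$ in $\sigma(\Free(M),X)$, so $\mu = \frac{\delta(x)-\delta(y)}{C}$. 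Weak$^*$-lower semicontinuity of the norm gives $\norm{\delta(x)-\delta(y)} \le C$, i.e. $d(x,y) \le C$; also $\norm{\mu} = 1$ forces $d(x,y) \ge C$ would be the wrong direction, so in fact $\norm{\mu} = d(x,y)/C = 1$ would require $d(x,y) = C$, making $\mu = m_{xy}$ — but that contradicts $\mu \notin V$ unless $x = y$. If $x = y$ then $\mu = 0$, again impossible. The remaining case is $\norm{\mu} = d(x,y)/C < 1$, but $\mu \in \ext(B_{\Free(M)})$ forces $\norm{\mu} = 1$, so this case does not arise either.

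Let me reorganize: the genuinely informative situation is when $x \neq y$ and $d(x,y) < C$. Then $\mu = \frac{\delta(x)-\delta(y)}{C}$ has norm $d(x,y)/C < 1$, contradicting that an extreme point of the unit ball lies on the sphere. So every case leads to a contradiction, and $\mu$ must be a molecule. The cleanest phrasing avoids the convex-combination argument entirely and just tracks the norm: the only way for the $\sigma(\Free(M),X)$-limit of molecules to have norm one is for the limiting distances to match, yielding $\mu = m_{xy}$.

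The main obstacle I anticipate is the first paragraph: justifying that the net of distances $d(x_\alpha,y_\alpha)$ stays bounded away from $0$, or else that $\mu$ collapses to something of norm less than one (in fact to $0$). This requires a quantitative use of the little-Lipschitz property — roughly, given $\eta > 0$, every $f \in B_X$ satisfies $|f(u)-f(v)| \le \eta\, d(u,v)$ whenever $d(u,v)$ is below some threshold depending on $f$; combined with the compactness of $(M,\tau)$ one should be able to extract a uniform threshold, or argue via a gliding-hump/diagonal argument along the net, to conclude that $m_{x_\alpha y_\alpha} \to 0$ weakly$^*$ on the subnet where $d(x_\alpha,y_\alpha) \to 0$. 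Everything else is routine compactness and lower semicontinuity of the norm.
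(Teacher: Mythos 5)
Your proposal is correct and follows essentially the same route as the paper: Milman's theorem to get $\ext(B_{\Free(M)})\subset\overline{V}^{w^*}$, weak$^*$-compactness of $\delta(M)$ (naturality plus boundedness) to extract limits $x,y$ and $d(x_\alpha,y_\alpha)\to C$, the little-Lipschitz property to rule out $C=0$, and $\norm{\mu}=1$ to force $C=d(x,y)$. The "main obstacle" you flag is in fact immediate and needs no uniform threshold or gliding-hump argument, since $\sigma(\Free(M),X)$-convergence is tested against one fixed $f\in X\subset\lip(M)$ at a time; the paper handles the same point by fixing a single $f\in X$ with $\<f,\mu\> >1/2$ and using its little-Lipschitz modulus to bound $d(x_\alpha,y_\alpha)$ below eventually, which is the contrapositive of your argument.
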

\begin{proof}
By the separation theorem we have that $B_{\F(M)} = \cconv^{w^*}(V)$. Thus, according to Milman theorem (see \cite[Theorem 3.41]{Fabian}), we have $\ext(B_{\F(M)}) \subset \overline{V}^{w^*} $. So let us consider $\gamma \in \ext(B_{\F(M)})$. Take a net $\{m_{x_\alpha,y_\alpha}\}$ in $V$ which $w^*$-converges to $\gamma$. By $w^*$-compactness of $\delta(M)$, we may assume (up to extracting subnets) that $\{\delta(x_\alpha)\}$ and $ \{\delta(y_\alpha)\}$ converge to some $\delta(x)$ and $\delta(y)$ respectively. 

Next, we claim that we may also assume that $\{d(x_\alpha,y_\alpha)\}$ converges to $C> 0$. Indeed, since $M$ is bounded, we may assume up to extract a further subnet that $\{d(x_\alpha,y_\alpha)\}$ converges to $C \geq 0$. By assumption, there is $f \in X$ such that $\<f,\gamma \> >  \|\gamma\|/2 =1/2$. Since $f \in \lip(M)$, there exists $\delta >0$ such that whenever $z_1,z_2 \in M$ satisfy $d(z_1,z_2)\leq \delta$ then we have $|f(z_1) - f(z_2)|\leq \frac{1}{2} d(z_1,z_2)$. Since
\[ \lim_\alpha \< f ,  m_{x_\alpha,y_\alpha} \>=\< f , \gamma \> > \frac{1}{2},\]
there is $\alpha_0$ such that $\<f,m_{x_\alpha,y_\alpha}\>>1/2$ for every $\alpha>\alpha_0$. Thus $d(x_\alpha,y_\alpha)> \delta$ for $\alpha>\alpha_0$, which implies that $C\geq \delta>0$. 
Summarizing, we have a net $\{m_{x_\alpha, y_\alpha}\}$ which $w^*$-converges to $\frac{\delta(x)-\delta(y)}{C}$. So, by uniqueness of the limit, $\gamma = \frac{\delta(x)-\delta(y)}{C}$. Since $\gamma \in \ext(B_{\mathcal F(M)})\subset S_{\mathcal F(M)}$, we get that $C=d(x,y)$ and so $\gamma = m_{xy}$.  
\end{proof}

We have learned that a weaker version (for compact $M$) of the following proposition appears in the preprint~\cite{AG} for compact metric spaces. Our approach, which is independent of~\cite{AG}, also yields a characterisation of exposed points of $B_{\mathcal F(M)}$. 

\begin{corollary}\label{cor:extnatpredual} Let $M$ be a bounded \emph{separable} metric space. Assume that there is a subspace $X$ of $\lip(M)$ which is a natural predual of $\mathcal F(M)$.  Then given $\mu\in B_{\mathcal F(M)}$ the following are equivalent:
\begin{itemize}
\item[(i)] $\mu\in \ext(B_{\mathcal F(M)})$.
\item[(ii)] $\mu\in \exp(B_{\mathcal F(M)})$.
\item[(iii)] There are $x,y \in M$, $x\neq y$, such that $[x,y]=\{x,y\}$ and $\mu = m_{xy}$.
\end{itemize}
\end{corollary}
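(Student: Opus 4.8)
The implications $(ii) \Rightarrow (i)$ is trivial (exposed points are extreme), and $(i) \Rightarrow (iii)$ follows almost immediately from the earlier machinery: by Proposition~\ref{prop:extmoleculesdual} every extreme point $\mu$ of $B_{\mathcal F(M)}$ is a molecule $m_{xy}$, so it only remains to check that $[x,y]=\{x,y\}$. If some $z \in [x,y]$ with $z \neq x,y$ existed, then $m_{xy} = \lambda m_{xz} + (1-\lambda) m_{zy}$ with $\lambda = d(x,z)/d(x,y) \in (0,1)$ (using $d(x,z)+d(z,y)=d(x,y)$ and $\|\delta(x)-\delta(z)\|=d(x,z)$ etc.), contradicting extremality. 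So the real content is $(iii) \Rightarrow (ii)$: given $x \neq y$ with $[x,y]=\{x,y\}$, produce a functional $f \in B_{\mathcal F(M)^*} = B_{\Lip(M)}$ that exposes $m_{xy}$, i.e. $\langle f, m_{xy}\rangle = 1$ and $\langle f, \nu\rangle < 1$ for every $\nu \in B_{\mathcal F(M)}$ with $\nu \neq m_{xy}$.

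First I would normalise so that $d(x,y)=1$. The natural candidate for the exposing functional is built from the IKW function $f_{xy}$ of Lemma~\ref{lemma:IKWfunction}, possibly combined with a function from the predual $X \subset \lip(M)$ as in the proof of Theorem~\ref{th:charpreserved}. By Lemma~\ref{lemma:IKWfunction}(b) we have $\|f_{xy}\|_L \le 1$ and clearly $\langle f_{xy}, m_{xy}\rangle = 1$, so $f_{xy}$ attains its norm at $m_{xy}$. The key point is that $f_{xy}$ separates $m_{xy}$ strictly from the other molecules \emph{as well as} that this strict separation survives passing to the closed convex hull. For the first part: suppose $\langle f_{xy}, m_{uv}\rangle = 1$ for some molecule $m_{uv} \neq m_{xy}$; by Lemma~\ref{lemma:IKWfunction}(d), $u,v \in [x,y] = \{x,y\}$, forcing $\{u,v\}=\{x,y\}$ and hence $m_{uv} = \pm m_{xy}$ — and the sign must be $+$ since the value is $1$. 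So $f_{xy}$ strictly separates $m_{xy}$ from $V \setminus \{m_{xy}\}$.

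To upgrade this from molecules to the whole ball, I would argue by contradiction: if $m_{xy}$ is not exposed by $f_{xy}$ (equivalently, by any functional — but here we work with a fixed one and then perturb), there is a sequence $\nu_n \in B_{\mathcal F(M)}$ with $\nu_n \neq m_{xy}$ and $\langle f_{xy}, \nu_n\rangle \to 1$. The strategy is to show the slices $S(f_{xy}, B_{\mathcal F(M)}, \varepsilon)$ shrink to $\{m_{xy}\}$. Since $B_{\mathcal F(M)} = \cconv(V)$, Lemma~\ref{lemma:epslice} reduces controlling the diameter of these slices to controlling $\diam(S(f_{xy}, B_{\mathcal F(M)}, \alpha) \cap V)$. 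So it suffices to show: for every $\eta > 0$ there is $\alpha > 0$ such that every molecule $m_{uv}$ with $\langle f_{xy}, m_{uv}\rangle > 1-\alpha$ satisfies $\|m_{uv} - m_{xy}\| < \eta$. Here I would use Lemma~\ref{lemma:IKWfunction}(c): if $\langle f_{xy}, m_{uv}\rangle > 1-\alpha$ then $(1-\alpha)\max\{d(x,u)+d(y,u), d(x,v)+d(y,v)\} < 1$, so $u$ and $v$ each lie close to the metric segment $[x,y]$; combined with $[x,y]=\{x,y\}$ and a compactness argument in the predual topology $\tau$ (where $\delta(M)$ is $\tau$-compact and the metric is $\tau$-lower semicontinuous, as in Proposition~\ref{p:NaturalLittleLip}), one forces $u,v$ to be within $\eta$ of $\{x,y\}$ in the metric $d$, and then the estimate at the end of the proof of Theorem~\ref{th:charpreserved} gives $\|m_{uv} - m_{xy}\| = O(\eta)$. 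This shows $m_{xy}$ is a denting point, hence in particular a point of continuity, and together with $f_{xy}$ attaining its max uniquely on $V$ this gives that $f_{xy}$ exposes $m_{xy}$.

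\textbf{The main obstacle.} The delicate step is the compactness argument turning ``$u,v$ close to the segment $[x,y]$'' (a condition on sums of distances) into ``$u,v$ close to the \emph{points} $x$ or $y$''. The condition $(1-\alpha)(d(x,u)+d(y,u)) < 1 = d(x,y)$ says $u$ nearly lies on $[x,y]$, but a priori there could be points of $M$ arbitrarily close to the segment without being close to its endpoints, even when $[x,y]=\{x,y\}$ — unless one uses compactness. The point is that if there were molecules $m_{u_n v_n}$ with $\langle f_{xy}, m_{u_n v_n}\rangle \to 1$ but $\min\{d(x,u_n),d(y,u_n)\}$ bounded below, then by $\tau$-compactness $\delta(u_n) \to \delta(u)$, $\delta(v_n) \to \delta(v)$ along a subnet, $\tau$-lower semicontinuity of $d$ gives $d(x,u)+d(y,u) \le \liminf(d(x,u_n)+d(y,u_n)) \le 1$, so $u \in [x,y]=\{x,y\}$, and similarly $v$, which upon checking the value of $f_{xy}$ forces $\{u,v\}=\{x,y\}$ — but then $d(x,u_n) \to 0$ or $d(y,u_n)\to 0$ (using that $\delta(u_n) \to \delta(x)$ or $\delta(y)$ in norm via Lemma~\ref{lemma:wconvmolecules} or directly via the predual), contradicting the lower bound. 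This is exactly where the hypothesis that the predual $X$ lies in $\lip(M)$ (which makes $d$ appropriately compatible with $\tau$ and rules out degeneracy of $d(x_\alpha,y_\alpha) \to 0$) is used, just as in Proposition~\ref{prop:extmoleculesdual}.
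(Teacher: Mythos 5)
Your handling of (ii)$\Rightarrow$(i) and (i)$\Rightarrow$(iii) is fine, but the route you propose for the main implication (iii)$\Rightarrow$(ii) has a genuine gap: you try to prove that the slices $S(f_{xy},B_{\F(M)},\alpha)$ shrink to $\{m_{xy}\}$, i.e.\ that $f_{xy}$ \emph{strongly} exposes $m_{xy}$. That is strictly stronger than the statement and is in fact false under exactly the hypotheses of the corollary: in Example~\ref{ex:TreeCompact} the space $\Omega$ is compact and countable, so $\lip(\Omega)$ is a natural predual contained in $\lip(\Omega)$, and $[x_\infty,0]=\{x_\infty,0\}$, yet $m_{x_\infty 0}=\delta(x_\infty)$ is not strongly exposed. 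Concretely, your intermediate claim ``for every $\eta>0$ there is $\alpha>0$ such that every molecule $m_{uv}$ with $\langle f_{xy},m_{uv}\rangle>1-\alpha$ satisfies $\|m_{uv}-m_{xy}\|<\eta$'' fails there: with $u=x_\infty$, $v=x_m$ one computes $\langle f_{x_\infty 0},m_{uv}\rangle=(1+2/m^2)^{-1}\to 1$ while $\|m_{uv}-m_{x_\infty 0}\|$ stays bounded away from $0$. The reason is structural: Lemma~\ref{lemma:IKWfunction}(c) only forces $u$ and $v$ to lie nearly on the metric segment, hence (after your compactness argument) near $\{x,y\}$, but it cannot prevent $u$ and $v$ from both lying near the \emph{same} endpoint, and such molecules can have $f_{xy}$-value close to $1$ while being far from $m_{xy}$. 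This is precisely the configuration that the auxiliary function of Lemma~\ref{lemma:fdent} is introduced to kill in the proof of Theorem~\ref{th:charpreserved}, and eliminating it there requires the quantitative hypothesis (ii) of that theorem, which is strictly stronger than $[x,y]=\{x,y\}$. Your closing sentence (``denting $+$ unique maximum on $V$ gives exposed'') is also a non sequitur: uniqueness of the maximiser among molecules does not by itself transfer to the closed convex hull $B_{\F(M)}=\cconv(V)$.

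What is missing is the paper's actual mechanism, which proves exposedness (not strong exposedness) without any slice-diameter control. Consider the face $A=\{\mu\in B_{\F(M)}:\langle f_{xy},\mu\rangle=1\}$. Since $A$ is an extremal subset of the ball, $\ext(A)\subset\ext(B_{\F(M)})$, which by Proposition~\ref{prop:extmoleculesdual} consists of molecules; then Lemma~\ref{lemma:IKWfunction}(d) together with $[x,y]=\{x,y\}$ gives $V\cap A=\{m_{xy}\}$, hence $\ext(A)\subset\{m_{xy}\}$. Finally, because $M$ is separable the dual space $\F(M)$ is a separable dual and so has the Radon--Nikod\'ym property (this is where the separability hypothesis is used), and therefore the closed bounded convex set $A$ satisfies $A=\cconv(\ext(A))=\{m_{xy}\}$; that is, $f_{xy}$ exposes $m_{xy}$. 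This RNP/Krein--Milman step, transferring ``unique maximiser among molecules'' to ``unique maximiser in the ball,'' is the key idea absent from your proposal, and your attempted substitute for it proves too much.
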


\begin{proof}
(i)$\Rightarrow$(iii) follows from Proposition \ref{prop:extmoleculesdual}. Moreover, (ii)$\Rightarrow$(i) is clear, so it only remains to show (iii)$\Rightarrow$(ii). To this end, let $x, y\in M$, $x\neq y$, be so that $[x,y]=\{x,y\}$. Consider 
\[ A =\{\mu\in B_{\Free(M)} : \<f_{xy}, \mu \>=1 \}. \]
We will show that $A=\{m_{xy}\}$ and so $m_{xy}$ is exposed by $f_{xy}$ in $B_{\Free(M)}$. Let $\mu\in \ext(A)$. Since $A$ is an extremal subset of $B_{\Free(M)}$, $\mu$ is also an extreme point of $B_{\Free(M)}$ and so $\mu\in V\cap A$. Recall that if $\<f_{xy}, m_{u,v}\>=1$ then $u,v\in [x,y]$, therefore $V\cap A = \{m_{xy}\}$. Thus $\ext(A)\subset \{m_{xy}\}$. Finally note that $A$ is a closed convex subset of $B_{\Free(M)}$ and so $A=\cconv(\ext(A))=\{m_{xy}\}$ since the space $\Free(M)$ has (RNP) as being a separable dual.
\end{proof}

It is proved in Aliaga and Guirao's paper \cite{AG} that if $(M,d)$ is compact, then a molecule $m_{xy}$ is extreme in $B_{\F(M)}$ if and only if it is preserved extreme if and only if $[x,y]= \{x,y\}$. Thus, if $\lip(M)$ $1$-S.P.U. (and thus $\F(M)=\lip(M)^*$), Proposition \ref{prop:extmoleculesdual} and Aliaga and Guirao's result provides a complete description of the extreme points: 
they are the molecules $m_{xy}$ such that $[x,y]=\{x,y\}$. 
It is possible to obtain the same kind of complete descriptions in some different settings as it is proved in the following result (see also Section \ref{section:unifdisc}).

\begin{proposition}
Let $(M,d)$ be a metric space for which there is a Hausdorff topology $\tau$ such that $(M,\tau)$ is compact and $d:(M,\tau)^2 \to \Real$ is l.s.c.
Let $0<p<1$ and let $(M,d^p)$ be the $p$-snowflake of $M$. 
Then given $\mu\in B_{\mathcal F(M)}$ the following are equivalent:
\begin{itemize}
\item[(i)] $\mu\in \ext(B_{\mathcal F(M,d^p)})$.
\item[(ii)] $\mu\in \strexp(B_{\mathcal F(M,d^p)})$.
\item[(iii)] There are $x,y \in M$, $x\neq y$, such that $\mu = m_{xy}$. 
\end{itemize}
\end{proposition}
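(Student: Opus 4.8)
The plan is to prove the cycle $\mathrm{(iii)}\Rightarrow\mathrm{(ii)}\Rightarrow\mathrm{(i)}\Rightarrow\mathrm{(iii)}$. Since strongly exposed points are extreme, $\mathrm{(ii)}\Rightarrow\mathrm{(i)}$ is automatic. One first notes that snowflaking trivialises metric segments: if $d^p(x,z)+d^p(z,y)=d^p(x,y)$ then strict concavity of $t\mapsto t^p$ forces $d(x,z)^p+d(z,y)^p\ge\bigl(d(x,z)+d(z,y)\bigr)^p\ge d(x,y)^p$, with the first inequality strict unless $d(x,z)d(z,y)=0$; hence $[x,y]=\{x,y\}$ for all $x\ne y$, and condition $\mathrm{(iii)}$ simply says ``$\mu$ is a molecule''. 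The hypothesis on $\tau$ will be used only for $\mathrm{(i)}\Rightarrow\mathrm{(iii)}$; the implication $\mathrm{(iii)}\Rightarrow\mathrm{(ii)}$ holds for an arbitrary metric space after $p$-snowflaking.

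\emph{Proof of }$\mathrm{(iii)}\Rightarrow\mathrm{(ii)}$. The key input is a quantitative form of strict concavity: if $0<p<1$, $a,b\ge0$ and $\min(a,b)\ge\eta$, then $a^p+b^p-(a+b)^p\ge\eta^p(2-2^p)>0$ (writing $s=a+b$, the left-hand side is at least $\eta^p+(s-\eta)^p-s^p$, which is increasing in $s\ge2\eta$, hence minimised at $s=2\eta$). Set $\lambda_\alpha:=\bigl(2\alpha/(2-2^p)\bigr)^{1/p}$, so $\lambda_\alpha\to0$ as $\alpha\to0^+$. Now fix $x\ne y$ and put $h(t):=\tfrac12\bigl(d^p(t,y)-d^p(t,x)\bigr)-\tfrac12\bigl(d^p(0,y)-d^p(0,x)\bigr)$; then $h\in\Lip(M,d^p)$, and since $|d^p(u,w)-d^p(v,w)|\le d^p(u,v)$ for every $w$ we get $\norm{h}_L\le1$, while $\langle h,m_{xy}\rangle=\bigl(h(x)-h(y)\bigr)/d^p(x,y)=1$, so $\norm{h}_L=1$. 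By Lemma~\ref{lemma:epslice} (with $X=\mathcal F(M,d^p)$ and its norming, symmetric set $V$ of molecules, all of norm $1$) it suffices to show that $\diam\bigl(S(h,B_{\mathcal F(M,d^p)},\alpha)\cap V\bigr)\to0$ as $\alpha\to0$: this produces slices through $m_{xy}$ of arbitrarily small diameter, i.e.\ $h$ strongly exposes $m_{xy}$. So let $\langle h,m_{uv}\rangle>1-\alpha$. Each of $d^p(u,y)-d^p(v,y)$ and $d^p(v,x)-d^p(u,x)$ is at most $d^p(u,v)$, and their average exceeds $(1-\alpha)d^p(u,v)$, so both exceed $(1-2\alpha)d^p(u,v)$. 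Combining $d^p(v,x)-d^p(u,x)>(1-2\alpha)d^p(u,v)$ with $d^p(v,x)\le\bigl(d(u,x)+d(u,v)\bigr)^p$ gives
\[
d(u,x)^p+d(u,v)^p-\bigl(d(u,x)+d(u,v)\bigr)^p<2\alpha\,d(u,v)^p ,
\]
so the concavity bound forces $\min\{d(u,x),d(u,v)\}<\lambda_\alpha\,d(u,v)$, i.e.\ (for $\lambda_\alpha<1$) $d(u,x)<\lambda_\alpha\,d(u,v)$; symmetrically $d(v,y)<\lambda_\alpha\,d(u,v)$. Feeding these into $d(u,v)\le d(u,x)+d(x,y)+d(y,v)$ yields $d(u,v)\le d(x,y)/(1-2\lambda_\alpha)$, hence $d(u,x),d(v,y)\le\lambda_\alpha\,d(x,y)/(1-2\lambda_\alpha)$. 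Finally, exactly as in the proof of Theorem~\ref{th:charpreserved},
\[
\norm{m_{uv}-m_{xy}}\le 2\,\frac{d^p(u,x)+d^p(y,v)}{d^p(x,y)}\le 4\Bigl(\frac{\lambda_\alpha}{1-2\lambda_\alpha}\Bigr)^{p},
\]
which tends to $0$ as $\alpha\to0$.

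\emph{Proof of }$\mathrm{(i)}\Rightarrow\mathrm{(iii)}$. Here I would exhibit a natural predual of $\mathcal F(M,d^p)$ contained in $\lip(M,d^p)$ and then apply Proposition~\ref{prop:extmoleculesdual}. Note that $d^p$ is again $\tau$-l.s.c.\ with $(M,\tau)$ compact, and that $\Lip(M,d^q)\subseteq\lip(M,d^p)$ for $p<q\le1$ because $|f(u)-f(v)|/d^p(u,v)\le\norm{f}_{L^q}\,d(u,v)^{q-p}\to0$. I claim $X:=\lip(M,d^p)\cap\mathcal C_\tau(M)$ is $1$-S.P.U.: given $x\ne y$ and $q\in(p,1)$ close to $p$, extend $x\mapsto d^q(x,y)$, $y\mapsto0$ from $\{x,y\}$ to $M$ by Matou\v{s}kov\'a's theorem~\cite{mat} applied to the $\tau$-l.s.c.\ metric $d^q$, obtaining $\tilde g\in\Lip(M,d^q)\cap\mathcal C_\tau(M)\subseteq X$ with $\norm{\tilde g}_{L^q}=1$ and $\tilde g(x)-\tilde g(y)=d^q(x,y)$; then $f:=d(x,y)^{p-q}\tilde g$ satisfies $f(x)-f(y)=d^p(x,y)$ and $\norm{f}_{L^p}\le d(x,y)^{p-q}\norm{\tilde g}_{L^q}(\diam M)^{q-p}=(\diam M/d(x,y))^{q-p}\to1$ as $q\to p^+$. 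Thus Proposition~\ref{prop:dualityKalton} makes $X$ a natural predual, and Proposition~\ref{prop:extmoleculesdual} gives $\ext(B_{\mathcal F(M,d^p)})\subseteq V$. Combined with $\mathrm{(iii)}\Rightarrow\mathrm{(ii)}\Rightarrow\mathrm{(i)}$, this closes the cycle.

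\emph{Main obstacle.} The crux is the uniformity in $\mathrm{(iii)}\Rightarrow\mathrm{(ii)}$: a \emph{single} functional $h$ must produce slices of arbitrarily small diameter, i.e.\ $\langle h,m_{uv}\rangle\to1$ must force $(u,v)\to(x,y)$ at a rate depending only on the fixed pair $(x,y)$ — in particular $d(u,v)$ must be controlled a priori — and it is precisely the quantitative strict concavity of $t\mapsto t^p$ that delivers this, the delicate part being to make all the constants close up. A secondary technical point is the construction of the natural predual in $\mathrm{(i)}\Rightarrow\mathrm{(iii)}$: it implicitly uses that $M$ is bounded and (through Proposition~\ref{prop:dualityKalton}) separable, so in general one may first need to restrict an extreme point of $B_{\mathcal F(M,d^p)}$ — which stays extreme — to a separable closed subspace $N\subseteq M$ before running the predual argument there.
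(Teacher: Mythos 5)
Your proposal is correct, and it is considerably more self-contained than the paper's proof, which handles the proposition in a few lines of citations. For (iii)$\Rightarrow$(ii) the paper simply quotes Weaver's construction of a peaking function for snowflaked metrics (Proposition 2.4.5 of his book) together with the characterisation of strongly exposed molecules via peak couples / property (Z) from the paper cited as [gpr, Theorem 4.4]; you instead build the exposing functional $h=\tfrac12\bigl(d^p(\cdot,y)-d^p(\cdot,x)\bigr)$ by hand and prove strong exposure directly through Lemma~\ref{lemma:epslice} and a quantitative form of the strict subadditivity of $t\mapsto t^p$ — in effect you re-prove the peaking/strong-exposure mechanism rather than invoking it, and, as you note, this implication needs no hypothesis on $\tau$ at all. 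For (i)$\Rightarrow$(iii) both arguments run through Proposition~\ref{prop:extmoleculesdual} applied to the natural predual $\lip(M,d^p)\cap\mathcal C_\tau(M)$; the paper leaves the existence of this predual implicit (it is the snowflake instance of the Kalton/Weaver duality), whereas you verify the $1$-S.P.U. hypothesis of Proposition~\ref{prop:dualityKalton} explicitly, via Matou\v{s}kov\'a extension at exponent $q>p$ and a rescaling whose norm penalty $(\diam M/d(x,y))^{q-p}$ tends to $1$ as $q\to p^+$ — a clean alternative to Weaver's peaking functions. One caveat: your closing suggestion of a separable reduction does not work as stated, since passing to a $d$-separable subset destroys $\tau$-compactness, while taking the $\tau$-closure may destroy separability (and passing to the support of $\mu$ does not give boundedness either); but no repair is really required, because boundedness of $M$ (and, for the duality as stated in Proposition~\ref{prop:dualityKalton}, separability) is tacitly part of the statement — the very notion of natural predual (Definition~\ref{def:natural}) is only defined for bounded $M$, and the paper's own proof of (i)$\Rightarrow$(iii) uses these hypotheses in exactly the same implicit way.
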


Observe that under the hypotheses above it is not necessarily true that $\Free(M)$ is a dual space, but $\Free(M,d^p)$ already is.

\begin{proof}
$(iii) \Longrightarrow (ii)$. Let us fix $x \neq y \in M$. Since $0<p<1$, it is readily seen that $[x,y]=\{x,y\}$. Moreover it is proved in \cite[Proposition 2.4.5]{weaver} that there is a peaking function at $(x,y)$. Thus $m_{xy}$ is a strongly exposed point (\cite[Theorem 4.4]{gpr}). The implication $(ii) \Longrightarrow (i)$ is obvious. To finish, the implication $(i) \Longrightarrow (iii)$ follows directly from Proposition \ref{prop:extmoleculesdual} and the fact that $[x,y]=\{x,y\}$ for every $x \neq y \in M$.
\end{proof}

Next we will show that the extremal structure of a free space has impact on its isometric preduals.
If a metric space $M$ is countable and satisfies the assumptions of Proposition \ref{prop:extmoleculesdual}, then $\ext(B_{\mathcal F(M)})$ is also countable. Therefore, any isometric predual of $\mathcal F(M)$ is isomorphic to a polyhedral space by a theorem of Fonf \cite{Fonf78}, and so it is saturated with subspaces isomorphic to $c_0$. This applies for instance in the following cases. 

\begin{corollary} Let $M$ be a countable compact metric space. Then any isometric predual of $\Free(M)$ (in particular $\lip(M)$) is isomorphic to a polyhedral space.
\end{corollary}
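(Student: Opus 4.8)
The plan is to chain together the structural results already established. First I would recall that for a countable compact metric space $M$, the space $C(M)$ is separable, hence $\Lip(M)$ is separable (it embeds isometrically into $C(\tilde M)$ via the map $\Phi$ introduced in the proof of Proposition~\ref{prop:Vweaklyclosed}, and $\tilde M$ is a countable, hence separable, metric space). Consequently $\Free(M)$ is a separable Banach space, and any isometric predual $X$ of $\Free(M)$ is separable as well. Moreover, since $M$ is compact, $\lip(M)$ $1$-S.P.U.\ and $\lip(M)$ is an isometric (automatically natural) predual of $\Free(M)$ by Weaver's theorem (Theorem~3.3.3 in~\cite{weaver}); but more to the point, \emph{any} isometric predual of $\Free(M)$ is natural because $M$ is compact, as observed right after Definition~\ref{def:natural}.

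Next I would invoke Proposition~\ref{prop:extmoleculesdual} — or rather its argument — to bound the extreme points of $B_{\Free(M)}$. The cleanest route: since $M$ is compact and countable, the set $V$ of molecules is the continuous image $\Phi^*\restricted_{\delta(\tilde M)}(\delta(\tilde M))$ of the countable set $\tilde M$, so $V$ is countable; by Milman's theorem $\ext(B_{\Free(M)}) \subset \overline{V}^{w^*}$, and by Proposition~\ref{prop:Vweaklyclosed} (applied to the complete, indeed compact, space $M$) together with the fact that extreme points of the ball lie on the sphere, we get $\ext(B_{\Free(M)}) \subset V$. Hence $\ext(B_{\Free(M)})$ is a countable set. (Alternatively one quotes Proposition~\ref{prop:extmoleculesdual} directly, using that $\lip(M)$ is a natural predual contained in $\lip(M)$, to get $\ext(B_{\Free(M)}) \subset V$, and $V$ is countable since $M$ is.)

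Now let $X$ be any isometric predual of $\Free(M)$. Then $X^{**} \supset X^* = \Free(M)$, and the extreme points of $B_{X^*} = B_{\Free(M)}$ form a countable set. By the theorem of Fonf~\cite{Fonf78} — a separable Banach space whose dual unit ball has countably many extreme points is isomorphic to a polyhedral space — we conclude that $X$ is isomorphic to a polyhedral space. This requires $X$ separable, which holds since $\Free(M)$ is separable and $X$ embeds isometrically into $\Free(M)^* = \Lip(M)$.

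The only genuine obstacle is making sure the hypotheses of Fonf's theorem are met, i.e.\ that $X$ is separable and that it is the predual (not merely a dual) so that $\ext(B_{X^*})$ is the object we have controlled; both are immediate here. A minor point worth stating explicitly is that $\Lip(M)$ is separable for countable compact $M$, which follows from the isometric embedding $\Phi:\Lip(M)\hookrightarrow C_b(\tilde M) = C(\beta\tilde M)$ being insufficient on its own (as $\beta\tilde M$ is not metrizable), so instead one notes that $\Free(M)$ is a quotient of $\ell_1(\tilde M)$ — or simply that $\Free(M)=\overline{\operatorname{span}}\,\delta(M)$ with $\delta(M)$ separable — hence $\Free(M)$ is separable, hence so is its subspace-of-a-dual predual $X$. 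With separability in hand the result is a one-line application of Fonf's theorem, so no calculation is needed.
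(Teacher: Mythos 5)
Your overall architecture is the paper's: show $\ext(B_{\Free(M)})$ consists of molecules, hence is countable, then invoke Fonf's theorem \cite{Fonf78} for the separable predual (separability of any predual $X$ follows, as you say, from separability of $X^*=\Free(M)$). However, two steps as written are flawed. First, your justification that $\lip(M)$ is a predual rests on the claim that compactness of $M$ alone gives the $1$-S.P.U.\ property; this is false in general (for $M=[0,1]$ the space $\lip(M)$ contains only constants, and $\Free([0,1])\cong L^1$ is not a dual space). What is actually needed, and what the corollary implicitly relies on, is Dalet's theorem that for a \emph{countable} compact metric space $\lip(M)$ $1$-separates points uniformly and is an isometric (hence, by compactness, natural) predual of $\Free(M)$ (\cite{dal1}, \cite{DaletThesis}); Weaver's Theorem~3.3.3 only applies once the separation hypothesis is secured, and countability is the hypothesis doing the work here.

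Second, your preferred ``cleanest route'' via Milman plus Proposition~\ref{prop:Vweaklyclosed} conflates two topologies. Milman's theorem, applied in the dual pair $(X,\Free(M))$ where $X$ is the predual, gives $\ext(B_{\Free(M)})\subset \overline{V}^{\sigma(\Free(M),X)}$, whereas Proposition~\ref{prop:Vweaklyclosed} controls only the \emph{weak} closure $\overline{V}^{w}=\overline{V}^{\sigma(\Free(M),\Lip(M))}$, which is in general smaller than the $\sigma(\Free(M),X)$-closure; if instead you apply Milman in $B_{\Lip(M)^*}$, you only learn about $\ext(B_{\Lip(M)^*})\cap\Free(M)$, i.e.\ about \emph{preserved} extreme points, not all extreme points of $B_{\Free(M)}$. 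This is precisely why Proposition~\ref{prop:extmoleculesdual} needs both that the predual sits inside $\lip(M)$ (to keep $d(x_\alpha,y_\alpha)$ bounded away from $0$) and that it is natural (to keep the limit points in $\delta(M)$). Your parenthetical fallback --- quoting Proposition~\ref{prop:extmoleculesdual} directly with $X=\lip(M)$ --- is the paper's actual argument and is correct once the Dalet input above is cited; with that repair the rest of your proof (countably many extreme points, then Fonf) goes through.
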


\begin{corollary} Let $(M, d)$ be a uniformly discrete bounded separable metric space
such that $\Free(M)$ admits a natural predual. 
Then any isometric predual of $\Free(M)$ is isomorphic to a polyhedral space.
\end{corollary}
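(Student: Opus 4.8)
The plan is to reduce the statement to the already-discussed mechanism combining Proposition~\ref{prop:extmoleculesdual} with Fonf's theorem, the only new ingredient being that the hypothesis "uniformly discrete, bounded, separable" forces enough structure on any natural predual. First I would observe that a separable uniformly discrete metric space is automatically countable: separability gives a countable dense subset, and uniform discreteness means every point is isolated, so the countable dense subset is all of $M$. In particular the set $V$ of molecules is countable.

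Next I would upgrade the bare hypothesis "$\Free(M)$ admits a natural predual" to "$\Free(M)$ admits a natural predual that is a subspace of $\lip(M)$". Since $M$ is uniformly discrete, $\Lip(M)=\lip(M)$, so by Corollary~\ref{cor:unifdiscduality} the existence of \emph{some} natural predual yields a natural predual of the form $X=\Lip(M,d)\cap \mathcal C_\tau(M)=\lip(M)\cap\mathcal C_\tau(M)\subseteq \lip(M)$ for a suitable compact Hausdorff topology $\tau$ on $M$. Thus the hypotheses of Proposition~\ref{prop:extmoleculesdual} are met, and we conclude $\ext(B_{\Free(M)})\subseteq\{m_{xy}:x,y\in M,\ x\neq y\}=V$, which is countable by the first step.

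Finally I would invoke Fonf's theorem \cite{Fonf78}: if $Y$ is a Banach space whose dual unit ball has only countably many extreme points, then $Y$ is isomorphic to a polyhedral space. Applying this with $Y$ any isometric predual of $\Free(M)$ (so $Y^*=\Free(M)$ and $\ext(B_{Y^*})=\ext(B_{\Free(M)})$ is countable) gives that $Y$ is isomorphic to a polyhedral space, as desired. Note this is exactly the argument already sketched in the paragraph preceding the corollary, specialised from "countable $M$ satisfying the assumptions of Proposition~\ref{prop:extmoleculesdual}" to the present setting; the only point requiring attention — and the mildest obstacle — is the passage through Corollary~\ref{cor:unifdiscduality} needed to land the predual inside $\lip(M)$, after which everything is immediate.
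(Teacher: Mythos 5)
Your proposal is correct and is essentially the paper's intended argument: the corollary is an instance of the remark preceding it, since separability plus uniform discreteness makes $M$ countable, and uniform discreteness gives $\Lip(M)=\lip(M)$ so that any natural predual is a subspace of $\lip(M)$, whence Proposition~\ref{prop:extmoleculesdual} yields $\ext(B_{\Free(M)})\subseteq V$ countable and Fonf's theorem applies to every isometric predual. Your detour through Corollary~\ref{cor:unifdiscduality} is fine (one can also just note that any predual embeds canonically into $\Lip(M)=\lip(M)$), so nothing essential differs from the paper.
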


\section{The uniformly discrete case}\label{section:unifdisc}
We have already witnessed that in the class of uniformly discrete and bounded metric spaces, many results about $\Free(M)$ become simpler.
Yet another example of this principle is the following main result of this section.
\begin{proposition}\label{unifdisccharext} 
Let $(M,d)$ be a bounded 
uniformly discrete metric space. Then a molecule $m_{xy}$ is an extreme point of $B_{\mathcal F(M)}$ if and only if $[x,y]=\{x,y\}$. 
\end{proposition}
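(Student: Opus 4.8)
The plan is to prove both implications. The direction "$m_{xy}$ extreme $\Rightarrow [x,y]=\{x,y\}$" is the easy one and holds in full generality: if $z \in [x,y]\setminus\{x,y\}$, then $d(x,z)+d(z,y)=d(x,y)$ forces the convex decomposition
\[
m_{xy} = \frac{d(x,z)}{d(x,y)}\,m_{xz} + \frac{d(z,y)}{d(x,y)}\,m_{zy},
\]
and since $z\neq x,y$ both $m_{xz}$ and $m_{zy}$ lie in $S_{\F(M)}$ and are distinct from $m_{xy}$, so $m_{xy}$ is not extreme. (This uses nothing about uniform discreteness.)

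For the converse, suppose $[x,y]=\{x,y\}$; I want to show $m_{xy}\in\ext(B_{\F(M)})$. The idea is to exploit uniform discreteness to reduce to a situation where the IKW function $f_{xy}$ of Lemma~\ref{lemma:IKWfunction} does the separating. Let $\theta>0$ be the uniform separation constant of $M$. Since $[x,y]=\{x,y\}$, the quantity
\[
\beta := \inf\Big\{ \tfrac{d(x,z)+d(z,y)}{d(x,y)} : z\in M\setminus\{x,y\}\Big\}
\]
satisfies $\beta>1$: indeed any $z$ with $d(x,z)+d(z,y)$ close to $d(x,y)$ must have $d(x,z)$ bounded below (it cannot be $<\theta$ unless $z=x$, similarly for $y$), and bounded above, so a standard argument via lower semicontinuity-type reasoning — or more simply, the observation that for $z$ not too close to $x$ or $y$ one has a definite gap, combined with the fact that $z$ within distance, say, $\tfrac{d(x,y)}{2}$ of $x$ and also contributing $d(x,z)+d(z,y)$ near $d(x,y)$ is impossible when $[x,y]=\{x,y\}$ and $M$ is uniformly discrete — yields $\beta>1$. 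Then the metric characterisation in Theorem~\ref{th:charpreserved}(ii) is satisfied trivially (take $\delta$ with $1-\delta \geq 1/\beta$; no $z$ can satisfy the hypothesis at all once $\varepsilon<\theta$, except $z$ equal to $x$ or $y$, which are excluded). Hence $m_{xy}$ is a denting point, in particular preserved extreme, in particular extreme.

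So the real content is the claim $\beta>1$, equivalently: there is no sequence $z_n\in M\setminus\{x,y\}$ with $d(x,z_n)+d(z_n,y)\to d(x,y)$. I expect this to be the main obstacle, since uniform discreteness alone does not prevent such a sequence a priori — one needs to combine it with $[x,y]=\{x,y\}$ and boundedness. The key point: if $d(x,z_n)+d(z_n,y)\to d(x,y)$, then (by boundedness) pass to a subnet along which $d(x,z_n)\to a$ and $d(z_n,y)\to b$ with $a+b=d(x,y)$; since $z_n\neq x$ we have $d(x,z_n)\geq\theta$ so $a\geq\theta$, and likewise $b\geq\theta$, so $0<a<d(x,y)$. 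Now I would invoke weak* compactness: a weak* cluster point $\mu$ of $\{\delta(z_n)\}$ in $\Lip(M)^*$ satisfies $\|\delta(x)-\mu\|\leq a$ and $\|\mu-\delta(y)\|\leq b$ by weak* lower semicontinuity of the norm, hence $\|\delta(x)-\mu\|+\|\mu-\delta(y)\|= d(x,y)=\|\delta(x)-\delta(y)\|$ with both terms strictly positive, exhibiting $m_{xy}$ as a nontrivial convex combination in $B_{\Lip(M)^*}$ — contradicting the fact (the easy direction of Theorem~\ref{th:charpreserved}, or rather its proof) that would make $m_{xy}$ fail to be preserved extreme; but we are trying to prove it IS extreme, so instead I run this as follows: directly show that any such $\mu$ would have to equal $\delta(w)$ for some $w$ — no, $\mu$ need not be a molecule. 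The cleaner route is simply to cite Theorem~\ref{th:charpreserved}: I only need condition (ii), and the observation above shows (ii) holds vacuously for small $\varepsilon$ once $\beta>1$, while $\beta=1$ would produce exactly the sequence $z_n$, and feeding such $z_n$ into the proof of (i)$\Rightarrow$(ii) of Theorem~\ref{th:charpreserved} shows $m_{xy}$ fails to be preserved extreme — but that's the contrapositive of what we want. Hence it suffices to rule out $\beta = 1$, and $\beta = 1$ with $[x,y]=\{x,y\}$: the cluster point $\mu\notin\{\delta(x),\delta(y)\}$ (else $z_n\to x$ or $z_n\to y$ in $M$ by Lemma~\ref{lemma:wconvmolecules} applied to $m_{xz_n}$ or via Corollary~\ref{cor:weaktonormcv}, contradicting $d(x,z_n)\geq\theta$), and then $\mu$ sits strictly between $\delta(x)$ and $\delta(y)$ metrically; one then argues this is incompatible with uniform discreteness by noting $\mu$ must be approximated weak* by $\delta(z_n)$ and using an explicit Lipschitz function peaking near $x$ to force $d(x,z_n)\to 0$. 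I would present the argument via Theorem~\ref{th:charpreserved} to keep it short: establish $\beta>1$ by the cluster-point argument, deduce (ii), conclude $m_{xy}$ is denting, hence extreme.
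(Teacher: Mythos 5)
Your first implication is fine, but the converse contains a fatal gap: the key claim $\beta>1$ is simply false under the hypotheses, and the whole strategy of deducing extremality from Theorem~\ref{th:charpreserved} cannot work. The paper's own Example~\ref{example:AG} is a counterexample: take $M=\{0\}\cup\{x_n:n\in\N\}\subset c_0$ with $x_1=2e_1$ and $x_n=e_1+(1+1/n)e_n$. This space is bounded and uniformly discrete, $[0,x_1]=\{0,x_1\}$, yet $d(0,x_n)+d(x_n,x_1)=2+\tfrac2n\to 2=d(0,x_1)$ while $\min\{d(0,x_n),d(x_1,x_n)\}\geq 1$. So $\beta=1$, condition (ii) of Theorem~\ref{th:charpreserved} fails, and $m_{0x_1}$ is \emph{not} a denting point and not even a preserved extreme point --- but the proposition asserts (and the paper proves) that it \emph{is} an extreme point. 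In other words, under exactly the hypotheses of the proposition, ``extreme'' is strictly weaker than ``preserved extreme/denting'', so no argument that routes through Theorem~\ref{th:charpreserved} can establish the statement. Your attempted rescue via a weak* cluster point of $\{\delta(z_n)\}$ runs into the further problem that $\F(M)$ need not be a dual space here (cf.\ Example~\ref{ex:nondual}), so the cluster point lives in $\Lip(M)^*=\F(M)^{**}$ and at best shows failure of \emph{preserved} extremality, which is not a contradiction; your own text already goes in circles at this point.

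The paper's argument is entirely different and purely about convex decompositions inside $\F(M)$: first reduce to countable $M$ by the separable-determination Lemma~\ref{lemma:extsep}; then use that for a bounded uniformly discrete countable $M$ the family $\{\delta(x_n)\}$ is a Schauder basis of $\F(M)$ equivalent to the unit vector basis of $\ell_1$. Given $m_{xy}=\tfrac12(\mu+\nu)$ with $\mu=\sum_n a_n\delta(x_n)$, for each $x_n\notin\{x,y\}$ one picks $\varepsilon_n>0$ with $(1-\varepsilon_n)(d(x,x_n)+d(x_n,y))\leq d(x,y)$ (this only uses $x_n\notin[x,y]$, not any uniform gap) and tests against $g_n=f_{xy}+\varepsilon_n\indicator{\{x_n\}}$, which is $1$-Lipschitz by Lemma~\ref{lemma:IKWfunction}(c) and uniform discreteness; since $\langle g_n,m_{xy}\rangle=1$ and also $\langle f_{xy},m_{xy}\rangle=1$, extremality of the value $1$ forces $\langle\indicator{\{x_n\}},\mu\rangle=a_n=0$. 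Hence $\mu$ is supported on $\{x,y\}$, and evaluating against two more explicit norm-one functions pins down $\mu=m_{xy}$. If you want to salvage your write-up, you must replace the $\beta>1$/denting route by an argument of this kind.
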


Also we will need the following observation, perhaps of independent interest: Since a point $x\in B_X$ is extreme if and only if $x \in \ext(B_Y)$ for every 2-dimensional subspace $Y$ of $X$, the extreme points of $B_{\Free(M)}$ are separably determined.
Let us be more precise.

\begin{lemma}\label{lemma:extsep} Assume that $\mu_0\in B_{\mathcal F(M)}$ is not an extreme point of $B_{\mathcal F(M)}$. Then there is a separable subset $N\subset M$ such that $\mu_0\in\mathcal F(N)$ and $\mu_0\notin\ext(B_{\mathcal F(N)})$.
\end{lemma}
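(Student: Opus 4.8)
The plan is to exploit the fact that failure of extremality is a finite-dimensional, indeed one-dimensional-direction, phenomenon: if $\mu_0 \in B_{\Free(M)}$ is not extreme, then there exist $\nu \neq 0$ in $\Free(M)$ with $\mu_0 \pm \nu \in B_{\Free(M)}$. All three of $\mu_0$, $\mu_0+\nu$, $\mu_0-\nu$ are finitely supported in the sense of lying in the closed span of finitely many $\delta$-images only after an approximation step, so the first move is to realise $\mu_0$, $\mu_0+\nu$ and $\mu_0-\nu$ as norm-limits of finitely supported elements of the respective free subspaces.

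First I would recall the standard density fact: for any $\gamma \in \Free(M)$ and any $\eta>0$ there is a finite set $F\subset M$ (which we may take to contain the base point $0$) and an element $\gamma' \in \Free(F) \subset \Free(M)$ with $\norm{\gamma - \gamma'} < \eta$; this is because $\vspan\,\delta(M)$ is dense in $\Free(M)$ and each element of the span involves only finitely many points. Applying this to $\mu_0$, $\mu_0+\nu$, and $\mu_0-\nu$ with $\eta = 2^{-k}$ produces finite sets $F_k^{(0)}, F_k^{(+)}, F_k^{(-)}$; let $N_k$ be their union together with $0$, and set $N := \overline{\bigcup_k N_k}$ (closure inside $M$). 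Then $N$ is separable. Since $\Free(N)$ embeds isometrically into $\Free(M)$ as the closed subspace generated by $\delta(N)$, and $\mu_0$ is approximated arbitrarily well by elements lying in $\Free(N_k)\subseteq \Free(N)$, which is closed, we get $\mu_0 \in \Free(N)$; similarly $\mu_0 \pm \nu \in \Free(N)$, hence $\nu \in \Free(N)$ as well.

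It then remains to check $\mu_0 \notin \ext(B_{\Free(N)})$. This is immediate once we observe that the inclusion $\Free(N) \hookrightarrow \Free(M)$ is isometric, so $B_{\Free(N)} = B_{\Free(M)} \cap \Free(N)$; since $\mu_0 + \nu$ and $\mu_0 - \nu$ both lie in $B_{\Free(M)} \cap \Free(N) = B_{\Free(N)}$ and $\nu \neq 0$, the point $\mu_0 = \tfrac12((\mu_0+\nu)+(\mu_0-\nu))$ is a nontrivial midpoint in $B_{\Free(N)}$, hence not extreme. One should also make sure $\norm{\mu_0} \le 1$ is inherited (obvious) and that $N$ is nonempty and contains the base point so that $\Free(N)$ is well defined.

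The only genuinely delicate point is the isometric nature of the embedding $\Free(N)\hookrightarrow \Free(M)$ and the identity $B_{\Free(N)} = B_{\Free(M)}\cap \Free(N)$; this is a well-known property of Lipschitz free spaces (it follows from the McShane–Whitney extension theorem, which guarantees every norm-one $f \in \Lip(N)$ extends to a norm-one $g \in \Lip(M)$, so that the canonical map $\Free(N)\to\Free(M)$ preserves norms). Everything else is routine: the density of finitely supported elements and the fact that a countable union of finite sets is countable, whose closure is separable. I do not expect any obstacle beyond citing the extension property correctly.
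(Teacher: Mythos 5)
Your proof is correct and follows essentially the same route as the paper: write the non-extremality as a nontrivial midpoint decomposition, approximate the three elements involved by finitely supported ones, collect the (countably many) supports together with the base point into a separable set $N$, and use that the canonical embedding $\mathcal F(N)\hookrightarrow\mathcal F(M)$ is isometric with closed range to conclude $\mu_0\in\mathcal F(N)\setminus\ext(B_{\mathcal F(N)})$. The only difference is cosmetic: you justify the isometry via McShane extension and take a closure in $M$, both of which the paper leaves implicit.
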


\begin{proof}
Write $\mu_0 = \frac{1}{2}(\mu_1+\mu_2)$, with $\mu_1,\mu_2\in B_{\mathcal F(M)}$. We can find sequences $\{\nu_n^i\}$ of finitely supported measures such that $\mu_i = \lim_{n\to\infty} \nu_n^i$ for $i=0,1,2$. Let $N = \{0\}\cup\operatorname{supp}\{\nu_n^i\}$. Note that the canonical inclusion $\mathcal F(N)\hookrightarrow \mathcal F(M)$ is an isometry and $\nu_n^i\in \mathcal F(N)$ for each $n, i$. Since $\mathcal F(N)$ is complete, it is a closed subspace of $\mathcal F(M)$. Thus $\mu_0,\mu_1,\mu_2\in \mathcal F(N)$ and so  $\mu_0\notin\ext(B_{\mathcal F(N)})$.
\end{proof}

\begin{proof}[Proof of Proposition~\ref{unifdisccharext}]
Let $m_{xy}$ be a molecule in $M$ such that $[x,y]=\{x,y\}$ and assume that $m_{xy}\notin\ext(B_{\mathcal F(M)})$. By Lemma \ref{lemma:extsep}, we may assume that $M$ is countable. Write $M=\{x_n:n \geq 0\}$. 
Let $\sequence{e_n:n \geq 1}$ be the unit vector basis of $\ell_1$.  
It is well known that the map $\delta(x_n)\mapsto e_n$ for $n\geq 1$  defines an isomorphism from $\Free(M)$ onto $\ell_1$. 
Thus $\sequence{\delta(x_n):n\geq 1}$ is a Schauder basis for $\Free(M)$. 

Now, let $x,y\in M$, $x\neq y$ be such that $[x,y]=\{x,y\}$. 
Assume that $m_{xy}=\frac{1}{2}(\mu+\nu)$ for $\mu,\nu\in B_{\mathcal F(M)}$ 
and write $\mu = \sum_{n=1}^\infty a_n \delta(x_n)$. 
Fix $n\in \mathbb N$ such that $x_{n}\notin\{x,y\}$. 
Then, there is $\varepsilon_n>0$ such that
\[
(1-\varepsilon_n)\left(d(x,x_n)+d(x_n,y)\right)\leq d(x,y).
\]
Let $g_n=f_{xy}+\varepsilon_n \indicator{\set{x_n}}$, which is an element of $\Lip(M)$ since $M$ is uniformly discrete. 
We will show that $||g_n||_L \leq 1$. 
To this end, take $u,v\in M$, $u\neq v$. 
Since $\norm{f}_L\leq 1$, it is clear that $\abs{\< g_n, m_{uv}\>} \leq 1$ if $u,v\neq x_n$. 
Thus we may assume $v=x_n$. 
Therefore (c) in Lemma \ref{lemma:IKWfunction} yields that $\<f_{xy}, m_{uv}\>\leq 1-\varepsilon_n$ and so $\<g_n, m_{uv}\>\leq 1$. 
Exchanging the roles of $u$ and $v$, we get that $||g_n||_L\leq 1$. 
Moreover, note that
\[1 = \<g_n, m_{xy}\> = \frac{1}{2}(\<g_n,\mu\> + \<g_n,\nu\>) \leq 1 \]
and so $\<g_n, \mu\> = 1$. 
Analogously we show that $\<f_{xy}, \mu\> =1$. 
Thus  $a_n=\< \indicator{\set{x_n}}, \mu\>=0$. 
Therefore $\mu = a \delta(x) + b \delta(y)$ for some $a, b\in \mathbb{R}$. 
Finally, let $f_1(t):=d(t,x)-d(0,x)$ and $f_2(t):=d(t,y)-d(0,x)$. 
Then $||f_i||_L=1$ and $\<f_i,m_{xy}\>=1$, so we also have $\<f_i, \mu\>=1$ for $i=1,2$. 
It follows from this that $a=-b=\frac{1}{d(x,y)}$, that is, $\mu=m_{xy}$. 
This implies that $m_{xy}$ is an extreme point of $B_{\mathcal F(M)}$.
\end{proof}

Next we show that preserved extreme points are automatically strongly exposed for uniformly discrete metric spaces. 
Notice that, contrary to other results in this section, no boundedness assumption is needed.

\begin{proposition}\label{presestrunidisc}
Let $M$ be a uniformly discrete metric space. 
Then every preserved extreme point of $B_{\mathcal F(M)}$ is also a strongly exposed point. 
\end{proposition}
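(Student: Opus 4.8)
The plan is to combine the two results already available in this section: Proposition~\ref{unifdisccharext}, which identifies the extreme molecules in the uniformly discrete bounded case, and Theorem~\ref{th:charpreserved}, which characterises denting molecules metrically, and finally to upgrade "denting" to "strongly exposed" using the explicit exposing function $f_{xy}$ together with a perturbation as in the proof of Proposition~\ref{unifdisccharext}. First I would reduce to the bounded case: a preserved extreme point $\mu$ of $B_{\mathcal F(M)}$ is a molecule $m_{xy}$ (by Weaver's theorem, reproved here), and by Theorem~\ref{th:charpreserved} applied directly (no boundedness needed there) we know that for every $\varepsilon>0$ there is $\delta>0$ such that $(1-\delta)(d(x,z)+d(z,y))<d(x,y)$ forces $\min\{d(x,z),d(z,y)\}<\varepsilon$. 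Since $M$ is uniformly discrete with some separation constant $\theta>0$, taking $\varepsilon<\theta$ shows that there is a single $\delta_0>0$ such that \emph{no} point $z\in M\setminus\{x,y\}$ satisfies $(1-\delta_0)(d(x,z)+d(z,y))<d(x,y)$; equivalently $d(x,z)+d(z,y)\ge (1-\delta_0)^{-1}d(x,y)$ for all such $z$, so $[x,y]=\{x,y\}$ and moreover the metric segment is "uniformly isolated".

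Next I would build the strongly exposing functional. Normalise $d(x,y)=1$. The idea is to use $f_{xy}$ from Lemma~\ref{lemma:IKWfunction}, which already satisfies $\langle f_{xy},m_{xy}\rangle=1$ and $\|f_{xy}\|_L\le 1$, and to show that any sequence of molecules $m_{u_nv_n}$ with $\langle f_{xy},m_{u_nv_n}\rangle\to 1$ must converge in norm to $m_{xy}$; by the usual slice argument (and the fact that $B_{\mathcal F(M)}=\cconv(V)$, via Lemma~\ref{lemma:epslice}) this suffices to conclude $m_{xy}$ is strongly exposed by $f_{xy}$. Given $\langle f_{xy},m_{u_nv_n}\rangle>1-\varepsilon_n$ with $\varepsilon_n\to 0$, part (c) of Lemma~\ref{lemma:IKWfunction} gives $(1-\varepsilon_n)\max\{d(x,u_n)+d(y,u_n),d(x,v_n)+d(y,v_n)\}<d(x,y)=1$, and then the uniform version of Theorem~\ref{th:charpreserved}(ii) forces, for $n$ large, each of $u_n,v_n$ to lie within $\varepsilon$ of $x$ or of $y$; since $M$ is uniformly discrete, for $n$ large $u_n\in\{x,y\}$ and $v_n\in\{x,y\}$, and since $\langle f_{xy},m_{u_nv_n}\rangle$ is close to $1$ and $f_{xy}(y)<f_{xy}(x)$ one must have $u_n=x$, $v_n=y$, i.e.\ $m_{u_nv_n}=m_{xy}$ eventually. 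Hence $\diam(S(f_{xy},B_{\mathcal F(M)},\alpha)\cap V)\to 0$ as $\alpha\to 0$, and Lemma~\ref{lemma:epslice} converts this into $\diam(S(f_{xy},B_{\mathcal F(M)},\beta))\to 0$, which is precisely the statement that $m_{xy}$ is strongly exposed by $f_{xy}$.

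One technical point to handle carefully is that Lemma~\ref{lemma:epslice} as stated requires $V\subset S_X$ and $B_X=\cconv(V)$, both of which hold here, and it produces a bound $2\diam(S(h,B_X,\alpha)\cap V)+4\varepsilon$; so I would phrase the slice argument to make $\diam(S(f_{xy},B_X,\alpha)\cap V)$ genuinely small for small $\alpha$ rather than merely "eventually a singleton," by noting that if it were not small along some sequence $\alpha_n\to 0$ we could extract molecules $m_{u_nv_n}\in S(f_{xy},B_X,\alpha_n)$ with $\|m_{u_nv_n}-m_{xy}\|\ge c>0$, contradicting the previous paragraph. Another point is that $f_{xy}$ need not vanish at the base point, so as usual I replace it by $t\mapsto f_{xy}(t)-f_{xy}(0)$, which does not change any of the relevant pairings on molecules.

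The main obstacle I anticipate is the passage from "denting" (which is what Theorem~\ref{th:charpreserved} hands us, via $m_{xy}$ preserved extreme $\Rightarrow$ $m_{xy}$ denting) to "strongly exposed": denting controls arbitrary slices through $\mu$ but does not a priori single out one linear functional. The resolution is exactly the uniform discreteness: it promotes the $\varepsilon$--$\delta$ condition of Theorem~\ref{th:charpreserved}(ii) to a condition with a \emph{fixed} $\delta_0$, which is what lets the \emph{single} functional $f_{xy}$ do the job of strongly exposing, via Lemma~\ref{lemma:IKWfunction}(c). Essentially all the remaining work is the bookkeeping of combining parts (b) and (c) of Lemma~\ref{lemma:IKWfunction} with Lemma~\ref{lemma:epslice}, which is routine.
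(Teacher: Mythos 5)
Your argument is correct, but it takes a genuinely different route from the paper's. The paper argues by contradiction through the characterisation of strongly exposed molecules via property (Z) given in \cite{gpr}: if $m_{xy}$ were not strongly exposed, there would be points $z_n\notin\{x,y\}$ with $(1-1/n)(d(x,z_n)+d(y,z_n))\le d(x,y)$, and condition (ii) of Theorem~\ref{th:charpreserved} then forces $\min\{d(x,z_n),d(y,z_n)\}\to 0$, which contradicts uniform discreteness in three lines. You use the same two key ingredients --- that a preserved extreme point is a molecule satisfying condition (ii), and that uniform discreteness upgrades the $\varepsilon$--$\delta$ statement to a single $\delta_0$ valid for all $z\notin\{x,y\}$ --- but instead of quoting the property-(Z) criterion you exhibit an explicit strongly exposing functional, $f_{xy}-f_{xy}(0)$: Lemma~\ref{lemma:IKWfunction}(c) plus your uniform $\delta_0$ show that for $\alpha\le\delta_0$ the only molecule in $S(f_{xy},B_{\F(M)},\alpha)$ is $m_{xy}$ itself, and Lemma~\ref{lemma:epslice} converts this into vanishing diameter of the slices, exactly as in the proof of (ii)$\Rightarrow$(i) of Theorem~\ref{th:charpreserved}. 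Your route is longer but self-contained within the paper's lemmas and identifies the exposing functional concretely; the paper's route is shorter at the cost of importing the external characterisation. Two minor points: the appeal to Proposition~\ref{unifdisccharext} in your opening plan is not actually needed (and would require boundedness) --- all you use is that a preserved extreme point is a molecule; and to get condition (ii) for a preserved extreme point you should either pass through Theorem~\ref{prop:preserveddenting} or note that the proof of (i)$\Rightarrow$(ii) in Theorem~\ref{th:charpreserved} in fact shows that the negation of (ii) already excludes preserved extremality --- the paper's own proof relies on the same implicit step.
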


\begin{proof}
Let $x,y\in M$ such that $m_{xy}$ is a preserved extreme point of $B_{\mathcal F(M)}$. Assume that $m_{xy}$ is not strongly exposed. 
By Theorem 4.4 in \cite{gpr}, the pair $(x,y)$ enjoys property (Z). 
That is, for each $n\in\mathbb N$ we can find $z_n\in M\setminus\{x,y\}$ such that
$$d(x,z_n)+d(y,z_n)\leq d(x,y)+\frac{1}{n}\min\{d(x,z_n),d(y,z_n)\}.$$
Thus,
\[(1-1/n)(d(x,z_n)+d(y,z_n))\leq d(x,y)\]
so it follows from condition (ii) in Theorem \ref{th:charpreserved} that $\min\{d(x,z_n), d(y,z_n)\}\to 0$. Since $M$ is uniformly discrete, this means that $\{z_n\}$ is eventually equal to either $x$ or $y$, a contradiction. 
\end{proof}

Aliaga and Guirao proved in \cite{AG} that, in the case of compact metric spaces, every molecule which is an extreme point of $B_{\mathcal F(M)}$ is also a preserved extreme point. 
However, that result is no longer true for general metric spaces, as the following example shows.

\begin{example}\label{example:AG} Consider the sequence in $c_0$ given by $x_1=2e_1$, and $x_n=e_1+(1+1/n)e_n$ for $n\geq 2$, where $\{e_n\}$ is the canonical basis. Let $M=\{0\} \cup \{x_n : n\in \mathbb N\}$. This metric space is considered in \cite[Example 4.2]{AG}, where it is proved that the molecule $m_{0 x_1}$ is not a preserved extreme point of $B_{\mathcal F(M)}$. Let us note that this fact also follows easily from Theorem \ref{th:charpreserved}. 
Moreover, by Proposition \ref{unifdisccharext} we have that $m_{0 x_1}\in \ext(B_{\mathcal F(M)})$. 
\end{example}

On the other hand, if we restrict our attention to uniformly discrete bounded metric spaces satisfying the hypotheses of the duality result, then all the families of distinguished points of $B_{\mathcal F(M)}$ that we have considered coincide.  

\begin{proposition}\label{prop:unifdiscexteq} Let $(M,d)$ be a uniformly discrete bounded metric space such that $\Free(M)$ admits a natural predual. Then for $\mu \in \closedball{\Free(M)}$ it is equivalent:
\begin{itemize}
\item[(i)] $\mu\in \ext(B_{\mathcal F(M)})$.
\item[(ii)] $\mu\in \strexp(B_{\mathcal F(M)})$.
\item[(iii)] There are $x,y \in M$, $x\neq y$, such that $\mu = m_{xy}$ and $[x,y]=\{x,y\}$. 
\end{itemize}
\end{proposition}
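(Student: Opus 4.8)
The plan is to establish the chain of implications $(ii)\Rightarrow(i)\Rightarrow(iii)\Rightarrow(ii)$, where $(ii)\Rightarrow(i)$ is trivial since strongly exposed points are always extreme. For $(i)\Rightarrow(iii)$, I would invoke Proposition~\ref{prop:extmoleculesdual} provided we can realise the natural predual inside $\lip(M)$; but here $M$ is uniformly discrete, so $\Lip(M)=\lip(M)$ and hence the natural predual $X$ automatically satisfies $X\subseteq\lip(M)$. Thus Proposition~\ref{prop:extmoleculesdual} applies and gives that any $\mu\in\ext(B_{\Free(M)})$ is a molecule $m_{xy}$ for some $x\neq y$. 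It then remains to see that $[x,y]=\{x,y\}$; if there were a third point $z\in[x,y]$, then $m_{xy}=\frac{d(x,z)}{d(x,y)}m_{xz}+\frac{d(z,y)}{d(x,y)}m_{zy}$ would be a nontrivial convex combination (the coefficients being strictly between $0$ and $1$ since $z\notin\{x,y\}$ and both $m_{xz},m_{zy}$ have norm $1$ with $m_{xz}\neq m_{xy}$), contradicting extremality. This settles $(i)\Rightarrow(iii)$.

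The remaining and main implication is $(iii)\Rightarrow(ii)$: if $x\neq y$ and $[x,y]=\{x,y\}$, then $m_{xy}$ is strongly exposed. The natural approach is to use the criterion from \cite{gpr} (Theorem~4.4 there, already cited in the proof of Proposition~\ref{presestrunidisc}): $m_{xy}$ fails to be strongly exposed precisely when the pair $(x,y)$ has property~(Z), i.e. there exist $z_n\in M\setminus\{x,y\}$ with
\[
d(x,z_n)+d(y,z_n)\leq d(x,y)+\tfrac1n\min\{d(x,z_n),d(y,z_n)\}.
\]
So suppose for contradiction that $(x,y)$ has property~(Z). Since $M$ is uniformly discrete, let $\theta>0$ be such that $d(u,v)\geq\theta$ for all distinct $u,v\in M$. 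From the (Z)-inequality we get $(1-\tfrac1n)(d(x,z_n)+d(y,z_n))\leq d(x,y)$; combining with property~(ii) of the metric characterisation of denting points (Theorem~\ref{th:charpreserved}) would need $m_{xy}$ to be denting, which we do not have a priori. Instead I would argue directly: the (Z)-inequality forces $\min\{d(x,z_n),d(y,z_n)\}\to 0$, because $d(x,z_n)+d(y,z_n)\geq d(x,y)$ always, so $\frac1n\min\{d(x,z_n),d(y,z_n)\}\geq d(x,z_n)+d(y,z_n)-d(x,y)\geq 0$, and since the left-hand side is bounded (the diameter of $M$ is finite) it tends to $0$, hence $\min\{d(x,z_n),d(y,z_n)\}\to 0$. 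By uniform discreteness, $\min\{d(x,z_n),d(y,z_n)\}\geq\theta$ unless $z_n\in\{x,y\}$, which is excluded. This contradiction shows $(x,y)$ does not have property~(Z), so $m_{xy}$ is strongly exposed.

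Actually, to invoke \cite[Theorem~4.4]{gpr} one should double-check that it applies without a boundedness hypothesis — but here $M$ is assumed bounded, so there is no issue, and in fact this same argument is exactly the one used in the proof of Proposition~\ref{presestrunidisc} restricted to molecules with $[x,y]=\{x,y\}$. The main obstacle I anticipate is making sure that the characterisation of strongly exposed points via property~(Z) is stated in \cite{gpr} in a form directly usable here (for molecules $m_{xy}$ rather than for abstract elements), and verifying that "$[x,y]=\{x,y\}$" is genuinely incompatible with property~(Z) in the uniformly discrete setting; both reduce to the short limiting argument above, so I expect no real difficulty. Assembling the three implications then completes the proof.
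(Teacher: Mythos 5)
Your steps (ii)$\Rightarrow$(i) and (i)$\Rightarrow$(iii) are fine and agree with the paper: since $M$ is uniformly discrete, $\Lip(M)=\lip(M)$, so the natural predual lies in $\lip(M)$ and Proposition~\ref{prop:extmoleculesdual} applies, and the splitting $m_{xy}=\frac{d(x,z)}{d(x,y)}m_{xz}+\frac{d(z,y)}{d(x,y)}m_{zy}$ for $z\in[x,y]\setminus\{x,y\}$ settles the segment condition. The gap is in the main implication (iii)$\Rightarrow$(ii). From the property-(Z) inequality you only get $0\le d(x,z_n)+d(y,z_n)-d(x,y)\le \frac{1}{n}\min\{d(x,z_n),d(y,z_n)\}\le \frac{1}{n}\diam(M)\to 0$; this shows that the \emph{excess} $d(x,z_n)+d(y,z_n)-d(x,y)$ tends to $0$, not that $\min\{d(x,z_n),d(y,z_n)\}\to 0$. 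That last inference is a non sequitur, and it is genuinely false: in the space of Examples~\ref{example:AG} and~\ref{ex:AG} (where $x_1=2e_1$, $x_n=e_1+(1+1/n)e_n$) one has $[0,x_1]=\{0,x_1\}$, while $d(0,x_m)+d(x_m,x_1)=2+2/m\le 2+\frac{1}{n}\min\{d(0,x_m),d(x_1,x_m)\}$ whenever $m\ge 2n$, because $\min\{d(0,x_m),d(x_1,x_m)\}=1+1/m\ge 1$. So the pair $(0,x_1)$ has property (Z) with the minimum bounded below by $1$, and indeed $m_{0x_1}$ is not even a preserved extreme point, hence not strongly exposed. Note that your argument for (iii)$\Rightarrow$(ii) never uses the natural predual hypothesis, and this very example (uniformly discrete, bounded, but admitting no natural predual) shows that the hypothesis cannot be dispensed with; in Proposition~\ref{presestrunidisc} the convergence $\min\{d(x,z_n),d(y,z_n)\}\to 0$ came from condition (ii) of Theorem~\ref{th:charpreserved}, i.e.\ from assuming $m_{xy}$ preserved extreme, which is exactly what is unavailable here.

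The paper's proof of (iii)$\Rightarrow$(ii) is where the natural predual enters: $\delta(M)$ is $w^*$-closed and bounded, hence $w^*$-compact, so the sequence $\{z_n\}$ furnished by property (Z) has a $w^*$-cluster point $z\in M$; the metric is $w^*$-lower semicontinuous (being built from the dual norm), so passing to the limit in $(1-1/n)(d(x,z_n)+d(y,z_n))\le d(x,y)$ gives $d(x,z)+d(y,z)\le d(x,y)$, i.e.\ $z\in[x,y]=\{x,y\}$; finally, if say $z=x$, then with $\theta=\inf_{u\neq v}d(u,v)>0$ one gets, again by lower semicontinuity, $\theta+d(x,y)\le\liminf_n(1-1/n)(d(x,z_n)+d(y,z_n))\le d(x,y)$, a contradiction (and similarly for $z=y$). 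Thus $(x,y)$ fails property (Z) and $m_{xy}$ is strongly exposed by the criterion of \cite{gpr}. You would need to replace your limiting argument by one of this type; as it stands, the key step of your proposal does not hold.
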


\begin{proof}
(i) $\Rightarrow$ (iii) follows from Proposition \ref{prop:extmoleculesdual}.
Moreover, (ii)$\Rightarrow$(i) trivially. 
Now, assume that $\mu=m_{xy}$ with $[x,y]=\{x,y\}$. We will show that the pair $(x,y)$ fails property (Z) and thus $\mu$ is a strongly exposed point. Assume, by contradiction, that there is a sequence $\{z_n\}$ in $M$ such that
\[ d(x,z_n)+d(y,z_n) \leq d(x,y) + \frac{1}{n}\min\{d(x,z_n),d(y,z_n)\}.\]
and so
\[ (1-1/n)(d(x,z_n)+d(y,z_n)) \leq d(x,y). \]
The compactness with respect to the $w^*$-topology ensures the existence of a $w^*$-cluster point $z$ of $\{z_n\}$ ($M$ and $\delta(M) \subset \F(M)$ being naturally identified). Now, by the lower semicontinuity of the distance, we have
\[ d(x,z)+d(y,z) \leq \liminf_{n\to\infty} (1-1/n)(d(x,z_n)+d(y,z_n)) \leq d(x,y).\]  
Therefore, $z\in [x,y]=\{x,y\}$. Suppose $z=x$. Denote $\theta  = \inf\{d(u,v):u\neq v\}>0$. The lower semicontinuity of $d$ yields
\begin{align*}
 \theta + d(x,y) &\leq \liminf_{n\to\infty} (1-1/n)(\theta+d(y,z_n)) \\
 &\leq \liminf_{n\to\infty} (1-1/n)(d(x,z_n)+d(y,z_n)) \leq d(x,y),
\end{align*}
which is impossible. The case $z=y$ yields a similar contradiction. Thus the pair $(x,y)$ has property $(Z)$. 
\end{proof}

We now give some examples in which the preduals of $\Free(M)$ have interesting properties. 
The first one is a uniformly discrete and bounded metric space $M$ such that $\F(M)$ is isometric to a dual Banach space but cannot admit a natural predual. This example comes from \cite{AG}[Example 4.2] and has already been introduced in Example \ref{example:AG}. 

\begin{example}\label{ex:AG}
Consider the sequence in $c_0$ given by $x_0=0, x_1=2e_1$, and $x_n=e_1+(1+1/n)e_n$ for $n\geq 2$, where $\{e_n\}$ is the canonical basis. 
Let $M=\{0\} \cup \{x_n : n\in \mathbb N\}$. 
Then\\
a) $\F(M)$ does not admit any natural predual.\\
b) the space $X=\set{f \in \Lip(M): \lim f(x_n)=f(x_1)/2}$ satisfies $X^*=\Free(M)$.
 
Our Corollary~\ref{cor:unifdiscduality} guarantees that in order to prove a) it is enough to show that there is no compact topology $\tau$ on $M$ such that $d$ is $\tau$-l.s.c.
Assume that $\tau$ is such a topology. 
Then the sequence $\set{x_n}$ admits a $\tau$-accumulation point $x \in M$.
Since $d$ is $\tau$-l.s.c. we get that $x \in B(0,1) \cap B(x_1,1)$.
But this is a contradiction as the latter set is clearly empty.

For the proof of b) we will employ the theorem of Petun\={\i}n and Pl\={\i}\v{c}hko. 
The space $X$ is a clearly separable closed subspace of $\Free(M)^*$.
Further, a simple case check shows that for any $x\neq y \in M$, $y \neq 0$, the function $f(x)=0, f(y)=d(x,y)$ can be extended as an element of $X$ without increasing the Lipschitz norm. 
Thus since $X$ is clearly a lattice, Proposition~3.4 of \cite{Kalton04} shows that $X$ is separating. 
Finally, 
if $f \in X$ and 
\[
\frac{f(x_{n_k})-f(x_{m_k})}{d(x_{n_k},x_{m_k})}\to \norm{f}_L
\]
then without loss of generality the sequence $\set{m_k}$ does not tend to infinity. 
Passing to a  subsequence, we may assume that it is constant, say $m_k=m$ for all $k \in \Natural$.
If $\set{n_k}$ does not tend to infinity, then $\frac{f(x_i)-f(x_m)}{d(x_i,x_m)}=\norm{f}_L$ for some $i \neq m$.
Otherwise, since $f \in X$, we have
\[
\frac{f(x_{n_k})-f(x_{m})}{d(x_{n_k},x_{m})}\to \frac{\frac{f(x_1)}{2}-f(m)}{d(x_1,x_m)}.
\]
So in this case the norm is attained at $\frac{1}{d(x_1,x_m)}\left(\delta(x_1)/2-\delta(x_m)\right) \in \closedball{\Free(M)}$.
It follows that every $f\in X$ attains its norm.
Thus by the theorem of Petun\={\i}n and Pl\={\i}\v{c}hko, $X^*=\Free(M)$.
\end{example}

Next we show that $\Free(M)$ can actually have both natural and non-natural preduals.

\begin{example}\label{ex:NaturalNonNatural}
Let $M=\set{0} \cup \set{1,2,3,\ldots}$ be a graph such that the edges are couples of the form $(0,n)$ with $n\geq 1$.
Let $d$ be the shortest path distance on $M$.
Then it is obvious and well known that $\Free(M)$ is isometric to $\ell_1$.
Moreover $\Free(M)$ admits both natural and non-natural preduals.
Indeed, an example of a natural predual is $X=\set{f\in \Lip(M): \lim f(n)=f(1)}$ (this is immediate using Corollary~\ref{cor:unifdiscduality}). 
An example of a non-natural predual is $Y=\set{f \in \Lip(M): \lim f(n)=-f(1)}$.
We leave to the reader the verification of the hypotheses of the theorem of Petunin and Plichko.
\end{example}

Our last example shows that there are uniformly discrete bounded metric spaces such that their free space does not admit any isometric predual at all.
Such observation is relevant to the open problem whether $\Free(M)$ has (MAP) for every uniformly discrete and bounded metric space $M$ (see also Problem~6.2 in~\cite{godsurvey}). 
Using a well known theorem of Grothendieck, in order to get an affirmative answer it would be enough to show that $\Free(M)$ is isometrically a dual space. 
Our example shows that such a proof cannot work in general. 
Nevertheless, for $M$ in this example, $\F(M)$ enjoys the (MAP).
\begin{example} \label{ex:nondual}
Let $M=\set{0} \cup \set{1,2,3,\ldots} \cup \set{a,b}$ with the following distances:
\begin{align*}
d(0,n)&=d(a,n)=d(b,n)=1+1/n,\\
d(a,b)&=d(0,a)=d(0,b)=2, \text{ and }\\
d(n,m)&=1
\end{align*}
for $n, m\in \set{1,2,3,\ldots}$. Then $\Free(M)$ is not isometrically a dual space.

Indeed, let us assume that $\Free(M)=X^*$. 
Then some subsequence of $\sequence{\delta(n)}$ is w$^*$-convergent to some $\mu \in \Free(M)$.
We have $\norm{\mu-\delta(a)}\leq 1$ and $\norm{\mu}\leq 1$.
But Proposition~\ref{unifdisccharext} implies that $\delta(a)/2$ is an extreme point of $\closedball{\Free(M)}$.
This means that $\closedball{\Free(M)}(0,1) \cap \closedball{\Free(M)}(\delta(a),1)=\set{\delta(a)/2}$.
Similarly for $\delta(b)/2$.
Hence $\delta(a)/2=\mu=\delta(b)/2$. Contradiction.

Let us now prove that $\F(M)$ has the (MAP). Let $M_n:= \{0,a,b,1,\ldots,n \}$ and define $f_n$: $M \to M_n$ by $f_n(x)=x$ if $x \in M_n$ and $f(x)=n$ otherwise. The function $f_n$ is obviously a retraction from $M$ to $M_n$. Moreover a simple computation leads to $\|f_n\|_{L} \leq 1+ 1/n$. Let us denote $\tilde{f_n}$: $\F(M) \to \F(M_n)$ the linearisation of $f_n$ which is in fact a projection of the same norm: $\|\tilde{f_n}\| \leq 1+1/n$. Then define $P_n:=(1+1/n)^{-1}\tilde{f_n}$. Obviously, $\|P_n\| \leq 1$, $P_n$ is of finite rank and $\|P_n\gamma - \gamma\| \to 0$ for every $\gamma \in \F(M)$. Thus $\F(M)$ has the (MAP).
\end{example}

\section{Compact metric spaces} \label{s:compact}

In this section we focus on the case in which $M$ is a compact metric space and $\mathcal F(M)$ is the dual of $\lip(M)$. Recall that in this case all extreme points of $B_{\mathcal F(M)}$ are molecules by Corollary 3.3.6 in \cite{weaver}. We will show that indeed $\mathcal F(M)$ satisfies a geometrical property, namely being \emph{weak* asymptotically uniformly convex}, which implies in particular that the norm and the weak* topologies agree in $S_{\mathcal F(M)}$ and so every extreme point of the closed ball is also a denting point. 

If $X$ is a separable Banach space then the \emph{modulus of weak*-asymptotic uniform convexity} of $X^*$ can be computed as follows (\cite{BM10}):

\begin{equation*} \overline{\delta}_{X^*}^{*}(t) =\inf_{x^*\in B_{X^*}} \inf_{\substack{x^*_n\stackrel{w^*}{\to}0\\ \norm{x^*_n}\geq t}} \liminf_{n\to\infty} \norm{x^*+x^*_n}-1.
\end{equation*}

Recall that $X^*$ is said to be \emph{weak*-asymptotically uniformly convex} (weak*-AUC for short) if $\overline{\delta}_{X^*}^{*}(t)>0$ for each $t>0$.

\begin{proposition}\label{p:compactweakAUC} Let $M$ be a compact metric space. Assume that $\lip(M)$ is $1$-norming. Then $\mathcal F(M)$ is weak*-AUC.
\end{proposition}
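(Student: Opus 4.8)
The goal is to produce, for each $t>0$, a uniform lower bound $\overline{\delta}^*_{\mathcal F(M)}(t)>0$. So I fix $\mu\in B_{\mathcal F(M)}$ and a sequence $(\nu_n)\subset \mathcal F(M)$ with $\nu_n\xrightarrow{w^*}0$ and $\norm{\nu_n}\geq t$, and I want to show $\liminf_n\norm{\mu+\nu_n}\geq 1+c(t)$ for some positive $c(t)$ depending only on $t$ (and perhaps on $\diam M$). The natural device is to test against a well-chosen element of $\lip(M)$ of norm $\le 1$: since $\lip(M)$ is $1$-norming, $\norm{\mu+\nu_n}\geq \langle g,\mu+\nu_n\rangle$ for any $g\in B_{\lip(M)}$, and if we can find $g$ with $\langle g,\mu\rangle$ close to $1$ (almost norming $\mu$) and $\liminf_n\langle g,\nu_n\rangle\geq c(t)$, we are done. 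The first is easy by $1$-norming; the crux is the second, i.e. finding a single little-Lipschitz function that ``sees'' a definite amount of mass of every $w^*$-null sequence of norm $\ge t$.

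The key structural fact I would exploit is compactness of $M$ together with $\nu_n\xrightarrow{w^*}0$. First I would pick $g_0\in \lip(M)$, $\norm{g_0}_L\le 1$, with $\langle g_0,\mu\rangle>1-\varepsilon$. Then I would try to perturb $g_0$ on a small region to capture the mass of the $\nu_n$. Since $\norm{\nu_n}\ge t$, for each $n$ there is $h_n\in B_{\lip(M)}$ (or even $B_{\mathrm{Lip}_0(M)}$, but we need little-Lipschitz so we use that $\lip(M)^*=\mathcal F(M)$) with $\langle h_n,\nu_n\rangle\ge t/2$. The difficulty is that the $h_n$ are not uniform. Here I would use compactness of $M$: the unit ball of $\lip(M)$ is relatively compact in $\mathcal C(M)$ (this is the standard Arzelà--Ascoli-type fact that makes $\lip(M)$ a predual), so after passing to a subsequence $h_n\to h$ uniformly with $h\in B_{\lip(M)}$. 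But $\nu_n\xrightarrow{w^*}0$ gives $\langle h,\nu_n\rangle\to 0$, so $\langle h_n-h,\nu_n\rangle\ge t/2$ eventually, meaning the relevant mass of $\nu_n$ lives where $h_n-h$ is large while $\norm{h_n-h}_\infty\to 0$; thus this mass concentrates at small scales. This is exactly the mechanism that a little-Lipschitz perturbation can detect: one can then localize $\nu_n$ into a ball of small $d$-diameter (using a partition-of-unity / cutoff argument on the compact space $M$) and on that ball add a small multiple of a little-Lipschitz bump (built from the distance function, truncated and snowflake-smoothed so as to stay in $\lip(M)$) to $g_0$ without increasing the Lipschitz norm past $1$, while gaining a definite amount $c(t)$ on $\langle \cdot,\nu_n\rangle$.

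I expect the main obstacle to be the precise quantitative bookkeeping in the last step: combining $g_0$ (which almost norms $\mu$) with the localized bump so that the sum still has Lipschitz norm $\le 1$ \emph{and} is genuinely little-Lipschitz \emph{and} the gain $c(t)$ is independent of $n$ and of $\mu$. A clean way to handle the norm control is to arrange that $g_0$ and the bump have (almost) disjoint ``active'' supports at the relevant scale, or to use an $f_{xy}$-type function (Lemma~\ref{lemma:IKWfunction}) which is designed precisely so that convex-type combinations with other norm-one functions do not exceed $1$; then $\langle \cdot,\mu+\nu_n\rangle$ splits additively. A subtlety worth flagging: one must verify the bump function actually lies in $\lip(M)$ (not just $\mathrm{Lip}_0(M)$), which is where compactness of $M$ is genuinely used again — on a compact metric space, Lipschitz functions supported near a point, suitably flattened, are little-Lipschitz. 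Once these pieces are in place, taking $\varepsilon\to 0$ yields $\overline{\delta}^*_{\mathcal F(M)}(t)\geq c(t)>0$, proving $\mathcal F(M)$ is weak*-AUC.
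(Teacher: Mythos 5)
Your plan has a gap that is structural rather than a matter of ``quantitative bookkeeping''. The device announced at the outset cannot work: the weak* topology in question is $\sigma(\mathcal F(M),\lip(M))$, so $\nu_n\stackrel{w^*}{\to}0$ means precisely that $\<g,\nu_n\>\to 0$ for \emph{every} $g\in\lip(M)$; hence no single little-Lipschitz $g$ (and in particular no fixed perturbation $g_0+\text{bump}$ lying in $\lip(M)$) can satisfy $\liminf_n\<g,\nu_n\>\geq c(t)>0$. The test functions must depend on $n$ and must be allowed to leave $\lip(M)$ (only their norms need to be controlled, since $\Lip(M)=\mathcal F(M)^*$ in any case). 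Two further steps fail as stated. First, the Arzel\`a--Ascoli step: the uniform limit $h$ of the $h_n\in B_{\lip(M)}$ is Lipschitz with $\norm{h}_L\le 1$, but it need not be little-Lipschitz, so you cannot invoke weak*-nullity to conclude $\<h,\nu_n\>\to 0$; the inference that the mass of $\nu_n$ ``concentrates at small scales'' is therefore unjustified. Second, even granting small-scale concentration, the mass need not localize in one ball of small diameter: take $\nu_n$ to be a convex combination of several molecules $m_{x_iy_i}$ with all $d(x_i,y_i)$ small but the pairs scattered over $M$; such $\nu_n$ can be weak*-null of norm $\ge t$, and a single bump captures only an arbitrarily small fraction of it.

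The paper's proof supplies exactly the mechanism your sketch is missing, and avoids any localization. One may assume $\mu$ finitely supported; choose $f\in\lip(M)$, $\norm{f}_L=1$, with $\<f,\mu\>>1-\varepsilon$, and a scale $\theta$ at which $f$ has increments at most $\varepsilon d(x,y)$; take a finite $\delta$-net $E\supset\supp\mu$ with $\delta$ small relative to $\varepsilon\theta$. Since $\mathcal F(E)$ is finite dimensional and $\nu_n$ is weak*-null with $\norm{\nu_n}\ge t$, one has $\liminf_n\dist(\nu_n/t,\mathcal F(E))\ge 1/2$ (Lemma~\ref{lemma:weakstarfin}), so Hahn--Banach produces $f_n\in(1+\varepsilon)B_{\Lip(M)}$ \emph{vanishing on $E$} with $\liminf_n\<f_n,\nu_n\>\ge t/2$; these $f_n$ are merely Lipschitz, vary with $n$, have sup norm at most $(1+\varepsilon)\delta$ because $E$ is a $\delta$-net, and satisfy $\<f_n,\mu\>=0$. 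The norm estimate $\norm{f+f_n}_L\le 1+\varepsilon$ then follows by splitting into the cases $d(x,y)\le\theta$ (where $f$ is flat and $f_n$ has full Lipschitz slack) and $d(x,y)>\theta$ (where the smallness of $\norm{f_n}_\infty$ makes its increment negligible), and evaluating $\<f+f_n,\mu+\nu_n\>$ gives $\liminf_n\norm{\mu+\nu_n}\ge\frac{1}{1+\varepsilon}(1-2\varepsilon+t/2)$, whence $\overline{\delta}^*_{\mathcal F(M)}(t)\ge t/2$. If you want to salvage your outline, you would have to replace the single-bump localization by something equivalent to this finite-net plus Hahn--Banach argument, at which point you have reproduced the paper's proof.
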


For the proof we need the following easy lemma.  

\begin{lemma}\label{lemma:weakstarfin} Let $\{x_n^*\}\subset X^*$ be a weak*-null sequence such that $\Vert x_n^*\Vert\geq 1$ and $F\subset X^*$ be a finite dimensional subspace. Then $\liminf_{n\to\infty} d(x_n^*,F)\geq \frac12$. 
\end{lemma}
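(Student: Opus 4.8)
The plan is to prove Lemma~\ref{lemma:weakstarfin} by a standard finite-dimensional approximation argument, reducing the infinite-dimensional situation to a compactness statement in the finite-dimensional subspace $F$. First I would fix $\varepsilon > 0$ and choose a finite $\varepsilon$-net $\{y_1^*,\dots,y_k^*\}$ of the unit sphere $S_F$ (this exists since $F$ is finite-dimensional, hence $B_F$ is compact). For each $j$, since $y_j^* \in S_{X^*}$, I can pick $x_j \in S_X$ with $\duality{x_j, y_j^*} > 1 - \varepsilon$. Now I would use the hypothesis that $\{x_n^*\}$ is weak*-null: testing against the finitely many vectors $x_1,\dots,x_k$, there is $N$ such that $\abs{\duality{x_j, x_n^*}} < \varepsilon$ for all $n \geq N$ and all $j = 1,\dots,k$.

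Next, given $n \geq N$ and an arbitrary $z^* \in F$, I want a lower bound on $\norm{x_n^* - z^*}$. If $\norm{z^*} \geq \frac12$ the job is essentially done after we also handle the small-norm case, so the content is when $z^*$ is comparable to a unit vector of $F$; write $z^* = \norm{z^*} u^*$ with $u^* \in S_F$ (the case $z^* = 0$ being trivial from $\norm{x_n^*} \geq 1$), pick $y_j^*$ from the net with $\norm{u^* - y_j^*} < \varepsilon$, and evaluate $x_n^* - z^*$ at $x_j$:
\[
\norm{x_n^* - z^*} \geq \duality{x_j, z^* - x_n^*} = \norm{z^*}\duality{x_j, u^*} - \duality{x_j, x_n^*} \geq \norm{z^*}(1 - 2\varepsilon) - \varepsilon,
\]
using $\duality{x_j, u^*} \geq \duality{x_j, y_j^*} - \norm{u^* - y_j^*} > 1 - 2\varepsilon$. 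Combining this with the trivial bound $\norm{x_n^* - z^*} \geq \norm{x_n^*} - \norm{z^*} \geq 1 - \norm{z^*}$, we get for every $z^* \in F$ and $n \geq N$ that
\[
\norm{x_n^* - z^*} \geq \min_{0 \leq s \leq 1}\max\{1 - s,\ s(1-2\varepsilon) - \varepsilon\}.
\]
The minimum over $s$ of this max is attained where the two linear functions cross, giving a value of the form $\tfrac12 - O(\varepsilon)$; explicitly $\tfrac{1-2\varepsilon}{2-2\varepsilon} \geq \tfrac12 - \varepsilon$ for small $\varepsilon$. Hence $d(x_n^*, F) \geq \tfrac12 - \varepsilon$ for all $n \geq N$, so $\liminf_n d(x_n^*, F) \geq \tfrac12 - \varepsilon$, and letting $\varepsilon \to 0$ finishes the proof.

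I do not expect any serious obstacle here; the only mild care needed is the elementary optimization over $s$ and making sure the $\varepsilon$-bookkeeping (net fineness, choice of $x_j$, and weak*-null estimate) is uniform over all $z^* \in F$, which it is because the net and the vectors $x_j$ are chosen once and for all before $n$ is sent to infinity. One could alternatively phrase the argument via weak* lower semicontinuity of the norm on the quotient $X^*/F^\perp \cong F^*$, but the direct net argument above is cleaner and self-contained.
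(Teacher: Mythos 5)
The paper never actually proves this lemma (it is stated as an ``easy lemma'' and used directly in the proof of Proposition~\ref{p:compactweakAUC}), so there is no in-paper argument to match yours against; judged on its own, your proof is correct. Your route — an $\varepsilon$-net of $S_F$, norming vectors $x_j\in S_X$ fixed once and for all, the weak*-null hypothesis tested only against these finitely many vectors, and then the two lower bounds $\norm{x_n^*-z^*}\geq 1-\norm{z^*}$ and $\norm{x_n^*-z^*}\geq \norm{z^*}(1-2\varepsilon)-\varepsilon$ — does yield $\liminf_n d(x_n^*,F)\geq \tfrac12$. Two bits of bookkeeping deserve a word: the crossing value of $\max\{1-s,\ s(1-2\varepsilon)-\varepsilon\}$ is $\tfrac{1-3\varepsilon}{2-2\varepsilon}$ rather than $\tfrac{1-2\varepsilon}{2-2\varepsilon}$, and it is in fact slightly below $\tfrac12-\varepsilon$; this is harmless, since it is still $\tfrac12-O(\varepsilon)$ and you send $\varepsilon\to 0$. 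Also, your displayed minimum ranges over $s\in[0,1]$, so you should note explicitly that for $\norm{z^*}\geq 1$ the second bound alone already gives at least $1-3\varepsilon$, which exceeds that crossing value, so all $z^*\in F$ are covered. For comparison, the shortest standard argument (the one you allude to at the end) runs: if $d(x_{n_k}^*,F)\to c<\tfrac12$, choose $z_k^*\in F$ with $\norm{x_{n_k}^*-z_k^*}\to c$; weak*-null sequences are bounded, so $\{z_k^*\}$ is bounded in the finite-dimensional space $F$ and, after passing to a subsequence, $z_k^*\to z^*$ with $\norm{z^*}\geq 1-c$, while $x_{n_k}^*-z_k^*\to -z^*$ weak* and weak* lower semicontinuity of the norm give $\norm{z^*}\leq c$, forcing $c\geq\tfrac12$. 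Your version trades that compactness-plus-semicontinuity argument (and the Banach--Steinhaus boundedness it needs) for an explicit $\varepsilon$-net; both are perfectly adequate here.
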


\begin{proof}[Proof of Proposition \ref{p:compactweakAUC}] We will use the same arguments as in the proof of Proposition 8 in \cite{Petitjean17}. Fix $t>0$ and take $\gamma\in S_{\mathcal F(M)}$ and a weak*-null sequence $\{\gamma_n\}$ such that $||\gamma_n||\geq t$ for every $n\in\mathbb N$. We will prove that  
\begin{equation}\label{eq:weakAUCfree} \liminf_{n\to\infty} ||\gamma+\gamma_n||\geq 1+\frac{t}{2}.
\end{equation}
We may assume that $\gamma$ is finitely supported. Pick $f\in \lip(M)$ with $||f||_L=1$ and $\<f,\gamma\> >1-\varepsilon$. Take $\theta>0$ such that $\sup_{d(x,y)\leq \theta}|f(x)-f(y)|\leq\varepsilon d(x,y)$. Pick $\delta< \frac{\varepsilon \theta}{2(1+\varepsilon)}$. By compactness, there exists a finite subset $E\subset M$ containing the support of $\gamma$ and such that $\sup_{y\in M} d(y,E)< \delta$. We have $\liminf_{n\to\infty} d(\gamma_n/t, \mathcal F(E))\geq\frac{1}{2}$ by Lemma \ref{lemma:weakstarfin}. Now, by Hahn-Banach theorem, there exist a sequence $\{f_n\}\subset (1+\varepsilon)B_{\Lip(M)}$ such that $f_n|_E = 0$ and $\liminf_{n\to\infty} \<f_n,\gamma_n\>\geq \frac{t}{2}$. Consider $g_n=f+f_n$. By distinguishing the cases $d(x,y)<\theta$ and $d(x,y)>\theta$, one can show that $||g_n||_{L} \leq 1+\varepsilon$. Now we have
\begin{align*}
\liminf_{n\to\infty} ||\gamma+\gamma_n||&\geq \liminf_{n\to\infty}\frac{1}{1+\varepsilon}\<g_n,\gamma+\gamma_n\> \\
&= \frac{1}{1+\varepsilon}\liminf_{n\to\infty} (\<f,\gamma\>+\<f,\gamma_n\> + \<f_n,\gamma\>+\<f_n,\gamma_n\>)\\
&\geq \frac{1}{1+\varepsilon}(1-\varepsilon + \frac{t}{2}-\varepsilon) 
\end{align*}
since $\gamma_n\stackrel{w^*}{\to}0$ and $f\in \lip(M)$. Letting $\varepsilon\to 0$ proves (\ref{eq:weakAUCfree}). It follows that $\overline{\delta}_{\mathcal F(M)}^*(t)\geq \frac{1}{2}t$ and so $\mathcal F(M)$ is weak*-AUC. 
\end{proof}

It is well-known and easy to show that if $X^*$ is weak*-AUC then then every point of the unit sphere has weak*-neighbourhoods of arbitrarily small diameter. This fact and the Choquet's lemma yield that if $x^*\in \ext(B_{X^*})$ then  then there are weak*-slices of $B_{X^*}$ containing $x^*$ of arbitrarily small diameter. That is, every extreme point of $B_{X^*}$ is also a weak*-denting point. 

\begin{corollary} Let $M$ be a compact metric space. Assume that $\lip(M)$ $1$-S.P.U. Then every extreme point of $B_{\mathcal F(M)}$ is also a denting point.
\end{corollary}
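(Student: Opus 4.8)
The plan is to read this off from the weak*\nobreakdash-asymptotic uniform convexity of $\mathcal F(M)$ established in Proposition~\ref{p:compactweakAUC}, combined with the observation recorded in the remark immediately above.

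First I would verify that $\mathcal F(M)$ is a dual space with the appropriate predual. Since $M$ is compact it is separable and bounded, and if we take $\tau$ to be the metric topology then $\lip(M)\cap\mathcal C_\tau(M)=\lip(M)$ because every Lipschitz function is $\tau$\nobreakdash-continuous. Thus the hypothesis that $\lip(M)$ $1$-S.P.U. places us precisely in the situation of Proposition~\ref{prop:dualityKalton}, so $\lip(M)$ is a natural predual of $\mathcal F(M)$. In particular $\mathcal F(M)=\lip(M)^*$ isometrically, which is another way of saying that $\lip(M)$ is $1$-norming; hence Proposition~\ref{p:compactweakAUC} applies and $\mathcal F(M)$ is weak*\nobreakdash-AUC.

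Next I would run the argument sketched in the remark preceding the statement. Let $\mu\in\ext(B_{\mathcal F(M)})$, so that $\mu\in S_{\mathcal F(M)}$. Because $\mathcal F(M)$ is weak*\nobreakdash-AUC, $\mu$ admits $w^*$\nobreakdash-open neighbourhoods in $B_{\mathcal F(M)}$ of arbitrarily small diameter; and since $\mu$ is extreme and $B_{\mathcal F(M)}$ is $w^*$\nobreakdash-compact convex, Choquet's lemma lets us shrink each such neighbourhood to a $w^*$\nobreakdash-slice of $B_{\mathcal F(M)}$ containing $\mu$. Hence there are $w^*$\nobreakdash-slices of $B_{\mathcal F(M)}$ containing $\mu$ of arbitrarily small diameter. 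Since every $w^*$\nobreakdash-slice is determined by an element of $\lip(M)\subset\Lip(M)=\mathcal F(M)^*$ and is therefore in particular a slice of $B_{\mathcal F(M)}$ in the usual sense, we conclude that $\mu$ is a denting point.

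I expect no real obstacle here: the statement is a bookkeeping corollary of Proposition~\ref{p:compactweakAUC} and the remark. The only points deserving care are (i) confirming that $\lip(M)$ genuinely is an isometric predual of $\mathcal F(M)$ — without which the weak*\nobreakdash-modulus in Proposition~\ref{p:compactweakAUC} would not even be defined — which is exactly what Proposition~\ref{prop:dualityKalton} provides under the hypothesis $1$-S.P.U.; and (ii) the passage from "$w^*$\nobreakdash-denting" to "denting", which is immediate since slices coming from predual functionals are slices.
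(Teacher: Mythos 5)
Your argument is correct and is essentially the paper's own route: the $1$-S.P.U. hypothesis together with compactness gives the duality $\lip(M)^*=\mathcal F(M)$ via Proposition~\ref{prop:dualityKalton}, Proposition~\ref{p:compactweakAUC} then yields weak*-AUC, and the remark preceding the corollary (small relative $w^*$-neighbourhoods of sphere points plus Choquet's lemma, then $w^*$-slices being slices) finishes the proof. No gaps; your explicit check that $\lip(M)\cap\mathcal C_\tau(M)=\lip(M)$ for the metric topology is exactly the bookkeeping the paper leaves implicit.
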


At this point one could be inclined to believe that the denting points and the strongly exposed points of $\closedball{\Free(M)}$ coincide, at least when $M$ is compact. 
We are going to give an example of a compact metric space for which the inclusion $\strexp(\closedball{\Free(M)}) \subset \ext(\closedball{\Free(M)^{**}})\cap \Free(M)$ is strict.

\begin{example}\label{ex:TreeCompact} Let $(T,d)$ be the following set with its real-tree distance
\[
 [0,1]\times \set{0} \cup \bigcup_{n=2}^\infty \set{1-\frac1n}\times \left[0,\frac{1}{n^2}\right].
\]
We will consider $(\Omega,d)$ as the set 
\[
 \set{(0,0),(1,0)} \cup \set{\left(1-\frac1n,\frac1{n^2}\right):n\geq 2}
\]
together with the distance inherited from $(T,d)$.
Let us call for simplicity $0:=x_1:=(0,0)$, $x_\infty:=(1,0)$ and $x_n:=(1-\frac1n,\frac1{n^2})$ if $n\geq 2$.

Since the couple $(x_\infty,0)$ is not a peak couple, the characterisation of the points in $\strexp(B_{\mathcal F(M)})$ given in \cite{gpr} yields that $\delta(x_\infty)$ is not a strongly exposed point of $\closedball{\Free(\Omega)}$. 
 Aliaga and Guirao~\cite{AG} have proved that for a compact $M$, the condition $[x,y]=\set{x,y}$ implies that $\frac{\delta(x)-\delta(y)}{d(x,y)}$ is a preserved extreme point of $\closedball{\Free(M)}$.
In particular $\delta(x_\infty)$ is a preserved extreme point of $\closedball{\Free(\Omega)}$. 
\end{example}

\section{Application to norm attainment}\label{Section:normattainment}
Given a metric space $M$ and a Banach space $X$, we have the following isometric identification $\Lip(M,X)=L(\mathcal F(M),X)$. Considering $f\in \Lip(M,X)$, we say that $f$ \textit{strongly attains its norm} if there are two different points $x,y\in M$ such that $\Vert f(x)-f(y)\Vert= \Vert f\Vert d(x,y)$. In view of the results of \cite{gpr,godsurvey,kms}, we wonder when the classical notion of norm attainment coincides with the one defined just above. In light of Bishop-Phelps theorem, we are also interested in the denseness of the class of Lipschitz functions which strongly attain their norm in $\Lip(M,X)$.

We will mean by $\mathrm{Lip}_{SNA}(M,X)$ (respectively $NA(\F(M),X)$) to the class of all functions in $\Lip(M,X)$ which strongly attain its norm (respectively which attain its norm as a linear and continuous operator from $\mathcal F(M)$ to $X$). Let us recall that a Banach space is said to have the \emph{Krein-Milman property} (KMP) if every non-empty closed convex bounded subset has an extreme point. It is well known that (RNP) implies (KMP), although the converse is still an open question (there are important classes of spaces for which the answer is yes). 

We shall begin by stating the scalar case of previous result.

\begin{proposition} \label{normattainreal}
Let $(M,d)$ be a metric space such that $\F(M)$ has (KMP) and such that $\mathrm{ext}(B_{\F(M)}) \subseteq V$. Then every $f \in \Lip (M)$ which attains its norm on $\F(M)$ also strongly attains it. In other words, the following equality holds: 
$$NA(\F(M),\mathbb R)= \justLip_{SNA} (M,\mathbb R).$$
Therefore, $\overline{\justLip_{SNA}(M,\mathbb R)}^{\|\, \cdot \,\|} = \Lip(M)$.
\end{proposition}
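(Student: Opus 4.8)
The plan is to use the isometric identification $\Lip(M)=\F(M)^*$ together with the two hypotheses. I would first dispose of the trivial inclusion $\justLip_{SNA}(M,\R)\subseteq NA(\F(M),\R)$: if $f$ strongly attains its norm at some pair $x\neq y$, then $\langle f,\pm m_{xy}\rangle=\pm\frac{f(x)-f(y)}{d(x,y)}=\norm{f}_L$, so $f$ attains its norm as a functional at $\pm m_{xy}\in B_{\F(M)}$.

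For the reverse (and main) inclusion, I would take $f\in NA(\F(M),\R)$, normalise so that $\norm{f}_L=1$, and consider the face
\[
F:=\set{\mu\in B_{\F(M)}:\langle f,\mu\rangle=1}.
\]
Since $f$ attains its norm, $F\neq\emptyset$; it is convex, norm-closed, and bounded (being a subset of $B_{\F(M)}$), so (KMP) supplies an extreme point $\mu_0$ of $F$. A routine check, using that $\langle f,\cdot\rangle\leq 1$ on $B_{\F(M)}$, shows that $F$ is an extremal subset of $B_{\F(M)}$, and hence any extreme point of $F$ is an extreme point of $B_{\F(M)}$; thus $\mu_0\in\ext(B_{\F(M)})\subseteq V$. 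Writing $\mu_0=m_{xy}$ with $x\neq y$, the equality $\langle f,m_{xy}\rangle=1$ becomes $f(x)-f(y)=d(x,y)=\norm{f}_L\,d(x,y)$, i.e.\ $f$ strongly attains its norm. Combining the two inclusions gives $NA(\F(M),\R)=\justLip_{SNA}(M,\R)$.

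For the last assertion I would simply invoke the Bishop--Phelps theorem: the set $NA(\F(M),\R)$ of norm-attaining functionals is norm-dense in $\F(M)^*=\Lip(M)$, so by the equality just proved $\overline{\justLip_{SNA}(M,\R)}^{\,\norm{\cdot}}=\Lip(M)$.

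I do not anticipate a genuine obstacle here; the only steps needing a little care are verifying that $F$ is an extremal subset of the ball (so that extreme points of $F$ remain extreme in $B_{\F(M)}$) and noting that it is precisely the boundedness of $F$, inherited from $B_{\F(M)}$, that allows (KMP) to be applied.
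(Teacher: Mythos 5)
Your proposal is correct and follows essentially the same route as the paper: both pass to the face $\{\mu\in B_{\F(M)}:\langle f,\mu\rangle=1\}$, use (KMP) to extract an extreme point, observe it is extreme in $B_{\F(M)}$ and hence a molecule, and finish the density claim with Bishop--Phelps.
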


\begin{proof}
Notice that the inclusion $\justLip_{SNA}(M,\mathbb R)\subseteq NA(\mathcal F(M),\mathbb R)$ always holds. Thus we just have to prove the reverse one. Let $f$ be a function in $\Lip(M)$ which attains its norm on $B_{\F(M)}$. Since $\F(M)$ has (KMP), $f$ also attains its norm on an extreme point. Indeed, the set
\[ C=\{\mu\in B_{\mathcal F(M)} : \<f, \mu\> =1 \}\]
is a non-empty closed convex bounded subset of $\mathcal F(M)$, so there is $\mu\in \ext(C)$. Then it is easy to check that $\mu$ is also an extreme point of $B_{\mathcal F(M)}$. Since $\mathrm{ext}(B_{\F(M)}) \subseteq V$, $f$ attains its norm on a molecule $\frac{\delta(x)-\delta(y)}{d(x,y)}$ with $x \neq y$.

The last part follows from Bishop-Phelps theorem.
\end{proof}

As a consequence of Proposition \ref{prop:extmoleculesdual}, we get the following. 
\begin{corollary}
Let $M$ be a separable bounded metric space such that $\Free(M)$ admits a natural predual $X \subset \lip(M)$. Then $$NA(\F(M),\mathbb R)= \justLip_{SNA} (M,\mathbb R) \mbox{ and } \overline{\justLip_{SNA}(M,\mathbb R)}^{\|\, \cdot \,\|} = \Lip(M).$$
\end{corollary}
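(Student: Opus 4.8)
The plan is simply to check that the two hypotheses of Proposition~\ref{normattainreal} are met in the present setting and then to invoke it; there is no genuine obstacle, as the statement is a formal consequence of two earlier results. The first hypothesis requires $\Free(M)$ to have the Krein--Milman property. Here I would argue that since $M$ is separable, the closed linear span of $\delta(M)$ --- which is all of $\Free(M)$ --- is separable, so $\Free(M)$ is a separable Banach space; being moreover the dual of $X$, it is a \emph{separable dual space} and therefore has the Radon--Nikod\'ym property, which in turn implies (KMP). The second hypothesis requires $\ext(B_{\Free(M)}) \subseteq V$, and this is precisely the conclusion of Proposition~\ref{prop:extmoleculesdual}, whose assumptions (namely that $M$ be bounded and that $X \subset \lip(M)$ be a natural predual of $\Free(M)$) coincide verbatim with those made here.

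With both hypotheses verified, Proposition~\ref{normattainreal} yields immediately that $NA(\Free(M),\R) = \justLip_{SNA}(M,\R)$, and the density of $\justLip_{SNA}(M,\R)$ in $\Lip(M)$ follows from the Bishop--Phelps theorem exactly as recorded there. The only point I would bother to spell out in the write-up is the passage from separability of $M$ to (KMP) for $\Free(M)$, since it proceeds through the (RNP) of separable duals rather than being immediate; everything else is a direct citation of Propositions~\ref{prop:extmoleculesdual} and~\ref{normattainreal}.
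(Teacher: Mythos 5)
Your proposal is correct and follows essentially the same route as the paper: the corollary is obtained by feeding Proposition~\ref{prop:extmoleculesdual} (extreme points are molecules) and the (KMP) of $\Free(M)$ — which holds because $\Free(M)$ is a separable dual, hence has (RNP) — into Proposition~\ref{normattainreal}, with Bishop--Phelps giving the density statement. Your explicit verification of (KMP) via separability of $M$ is exactly the step the paper leaves implicit (and uses elsewhere, e.g.\ in Corollary~\ref{cor:extnatpredual}), so nothing is missing.
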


We give some examples where the previous corollary applies. 

\begin{example}
~
\begin{enumerate}
\item $M$ compact metric space such that $\lip(M)$ separates points uniformly (note that this result was first proved by Godefroy using M-ideal theory, see \cite{godsurvey}). For instance $M$ being compact and countable (see \cite{DaletThesis}), being the middle third Cantor set (see \cite{weaver}), or being any compact metric space where the distance is composed with a nontrivial gauge (see \cite{Kalton04}).
\item $M$ uniformly discrete metric space satisfying the assumptions of Proposition  \ref{prop:unifdiscexteq}.
\item $(B_{X^*},\|\cdot\|^p)$ unit ball of a separable dual Banach space where the distance is the norm to the power $p \in (0,1)$ (see Proposition 6.3 in \cite{Kalton04}).
\end{enumerate}
\end{example}

Following the ideas of the last section in \cite{vector} we can extend this kind of result in the vector valued case. \begin{proposition}
Let $X$ be a Banach space and $M$ be a separable bounded metric space such that $\Free(M)$ admits a natural predual $S \subset \lip(M)$. Assume that $X^*$ and $\F(M)$ has (RNP), and that $X^*$ or $\F(M)$ has $(AP)$. Then 
$$\overline{NA (\F(M),X^{**})}^{\|\, \cdot \, \|} =  \mathcal{L}(\F(M),X^{**}).$$ 
Equivalently, $\overline{\justLip_{SNA} (M,X^{**})}^{\|\, \cdot \, \|}= \Lip(M,X^{**})$.
\end{proposition}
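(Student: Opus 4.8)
The plan is to reduce the vector-valued norm-attainment statement to a known linear result about denseness of norm-attaining operators, using the special structure of $\Free(M)$ provided by the earlier sections. First I would recall the linear machinery: since $X^*$ and $\Free(M)$ have (RNP), and one of them has (AP), a theorem in the spirit of Bourgain (on operators attaining their norm between spaces with the Radon--Nikod\'ym property, cf.\ the discussion in \cite{vector}) applies to give that $NA(\Free(M),X^{**})$ is dense in $\mathcal L(\Free(M),X^{**})$ --- or more precisely, one needs the version where the range space is a dual space with (RNP) and (AP) so that the relevant approximation and martingale arguments go through. The key point to check is that our hypotheses line up exactly with the hypotheses of that linear theorem; this is where I would be careful to state which ``Bourgain-type'' result is being invoked and verify that $X^{**}$ being a dual space with (RNP) (it equals $(X^{*})^{*}$ and $X^*$ has (RNP)) together with the (AP) assumption suffices.

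The second ingredient is the identification of ``attains its norm as an operator'' with ``strongly attains its norm as a Lipschitz map'', i.e.\ the equality $NA(\Free(M),X^{**}) = \mathrm{Lip}_{SNA}(M,X^{**})$. This is the vector-valued analogue of Proposition~\ref{normattainreal}, and the proof should mirror it: given $T = \widehat f \in \mathcal L(\Free(M),X^{**})$ with $\norm T = 1$ attaining its norm, pick a functional $\phi \in S_{X^{***}}$ with $\norm{\phi\circ T} = 1$; then $\phi\circ T \in S_{\Lip(M)}$ attains its norm on $B_{\Free(M)}$, and since $\Free(M)$ has (RNP) it attains it at an extreme point of the face $\{\mu : \langle \phi\circ T,\mu\rangle = 1\}$, which is an extreme point of $B_{\Free(M)}$, hence a molecule $m_{xy}$ by Proposition~\ref{prop:extmoleculesdual} (here we use that $\Free(M)$ admits the natural predual $S\subset\lip(M)$). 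Then $1 = \langle \phi\circ T, m_{xy}\rangle \le \norm{T m_{xy}} \le 1$ forces $\norm{f(x)-f(y)} = \norm f\, d(x,y)$, so $f$ strongly attains its norm. The reverse inclusion is automatic.

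Putting these together: $\overline{NA(\Free(M),X^{**})}^{\norm\cdot} = \mathcal L(\Free(M),X^{**})$ by the linear Bourgain-type theorem, and under the identification $\Lip(M,X^{**}) = \mathcal L(\Free(M),X^{**})$ this reads $\overline{\mathrm{Lip}_{SNA}(M,X^{**})}^{\norm\cdot} = \Lip(M,X^{**})$, since the set whose closure we take is literally $NA(\Free(M),X^{**})$ translated through the isometry, equal to $\mathrm{Lip}_{SNA}(M,X^{**})$ by the previous paragraph. The main obstacle I anticipate is pinning down the precise linear denseness result and checking that ``$X^*$ or $\Free(M)$ has (AP)'' is the right hypothesis --- the classical Bourgain theorem needs (RNP) of the domain, and the passage to a dual range space with (AP) is exactly the refinement from \cite{vector} that must be cited correctly; the composition-with-a-functional trick and the extreme-point argument are routine given the earlier results of the paper.
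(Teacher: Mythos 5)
Your argument is essentially correct, but the denseness step takes a genuinely different route from the paper. Your second ingredient (composing with a norming functional $\phi\in X^{***}$, using that $\F(M)$ has (RNP), hence (KMP), to find an extreme point of the face, and Proposition~\ref{prop:extmoleculesdual} to see it is a molecule) is exactly the paper's first step, which the paper phrases as an application of the scalar Proposition~\ref{normattainreal} to $\phi\circ f$; so the identification $NA(\F(M),X^{**})=\justLip_{SNA}(M,X^{**})$ is handled the same way. For the denseness, however, the paper does \emph{not} invoke a Bourgain-type operator theorem: it uses the isometric identification $\mathcal{L}(\F(M),X^{**})=(\F(M)\pten X^{*})^{*}$, applies the plain Bishop--Phelps theorem to functionals on the predual $\F(M)\pten X^{*}$, and then uses \cite[Lemma 4.3]{vector} to pass from ``attains its norm as a functional on the tensor product'' to ``attains its norm as an operator''; the hypotheses that $X^{*}$ has (RNP) and that $X^{*}$ or $\F(M)$ has (AP) are exactly what that tensor-product lemma needs, which explains why they appear in the statement. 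Your route instead cites Bourgain's theorem that (RNP) of the \emph{domain} alone gives denseness of norm-attaining operators into an arbitrary range space; this is a correct and well-known result, so the hedging in your last paragraph is resolvable --- no hypotheses on $X^{**}$ are required, and in particular your proof would not use (AP) or the (RNP) of $X^{*}$ at all, yielding a formally stronger statement at the price of invoking a much heavier theorem. The paper's approach buys elementarity (only Bishop--Phelps plus a lemma already proved in \cite{vector}) at the price of the extra hypotheses; yours buys generality, provided you commit to the precise citation of Bourgain's result rather than leaving it as a point to be checked.
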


\begin{proof}
First of all, note that $f \in \Lip(M,X^{**})$ strongly attains its norm if and only if $f$: $\F(M) \to X^{**}$ attains its operator norm. Indeed, assume that there is $\gamma \in \F(M)$, $\|\gamma\| \leq 1$ and $\|f(\gamma)\|_{X^{**}} = \|f\|_{L}$. By the Hahn-Banach theorem there is $z \in X^{***}$ such that $\< z , f(\gamma) \> = \|f(\gamma)\|$. Since $z \circ f$: $M \to \R$ is a real valued Lipschitz function which attains its operator norm on $\gamma$, Proposition \ref{normattainreal} provides a molecule $m_{xy} \in \F(M)$ such that $f$ attains its operator norm on it. This exactly means that $f$ strongly attains its norm. The other implication is trivial.

Next, the following isometric identification are well known (see \cite{vector} for example): $\Lip(M,X^{**}) = \mathcal{L}(\F(M),X^{**}) = (\F(M) \pten X^{*})^*$. Moreover, according to \cite[Lemma 4.3]{vector}, if $f \in \mathcal{L}(\F(M),X^{**}) = (\F(M) \pten X^{*})^*$ attains its norm as a linear form on $\F(M) \pten X^{*}$, then it also attains its norm as an operator in $\mathcal{L}(\F(M),X^{**})$.

To finish, apply Bishop-Phelps theorem to get the desired norm-denseness. 
\end{proof}

\textbf{Acknowledgment. } The authors are grateful to Ram\'on Aliaga and Antonio Guirao for sending them their preprint. The first and the last authors are grateful to the Laboratoire de Mathématiques de Besançon for the excellent working conditions during their visit in June 2017. 
The third author is grateful to Departamento de An\'alisis Matem\'atico of Universidad de Granada for hospitality and excelent working conditions during his visit in June 2017.
The authors would like to thank Mat\'ias Raja for useful conversations.

\bibliographystyle{siam}
\bibliography{biblio}

\end{document}